\documentclass[12pt]{amsart}

\usepackage{algorithmic}
\usepackage[section]{algorithm}

\usepackage{fullpage}

\usepackage{amsfonts,amssymb,amscd,amsmath,enumerate,verbatim,color,xcolor}
\usepackage[latin1]{inputenc}
\usepackage{amscd}
\usepackage{latexsym}
\usepackage{tikz}
\usepackage{graphicx} 
\usepackage{here}
\usepackage{mathptmx}

\usepackage{hyperref}
\hypersetup{colorlinks,linkcolor=blue ,citecolor=blue, urlcolor=blue}
\def\NZQ{\mathbb}               

\def\ZZ{{\NZQ Z}}
\def\RR{{\NZQ R}}

\def\FF{{\NZQ F}}

\def\frk{\mathfrak}               

\def\Phi{{\frk N}}
\def\ab{{\mathbf a}}

\def\cb{{\mathbf c}}
\def\eb{{\mathbf e}}

\def\vb{{\mathbf v}}
\def\ub{{\mathbf u}}
\def\wb{{\mathbf w}}
\def\xb{{\mathbf x}}
\def\yb{{\mathbf y}}

\def\pb{{\mathbf p}}
\def\mb{{\mathbf m}}

\def\opn#1#2{\def#1{\operatorname{#2}}} 
\opn\gr{gr}

\def\Pc{{\mathcal P}}
\def\Qc{{\mathcal Q}}

\def\wt{{\rm wt}}
\def\hei{{\rm ht}}

\newtheorem{Theorem}{Theorem}[section]
\newtheorem{Lemma}[Theorem]{Lemma}
\newtheorem{Corollary}[Theorem]{Corollary}
\newtheorem{Proposition}[Theorem]{Proposition}

\theoremstyle{definition}

\newtheorem{Example}[Theorem]{Example}

\let\epsilon\varepsilon
\let\phi=\varphi
\let\kappa=\varkappa
\opn\dis{dis}
\opn\height{height}
\opn\dist{dist}
\def\pnt{{\raise0.5mm\hbox{\large\bf.}}}

\opn\Lex{Lex}
\opn\conv{conv}

\def\codeg{{\rm codeg}}
\numberwithin{equation}{section}

\title{Gorenstein Simplices and Even Binary Self-Complementary Codes}
\author{Akiyoshi Tsuchiya}

\address{Akiyoshi Tsuchiya,
Department of Information Science,
Faculty of Science,
Toho University,
2-2-1 Miyama, Funabashi, Chiba 274-8510, Japan} 
\email{akiyoshi@is.sci.toho-u.ac.jp}
\keywords{Gorenstein polytope, reflexive polytope, $h^*$-polynomial, binary linear code, self-complementary code, even code, fake weighted projective space}
\subjclass[2020]{52B20 (Primary), 14J45, 14M25,  94B05 (Secondary)}

\begin{document}

\begin{abstract}
It is known that if a Gorenstein simplex of dimension \(d\) and degree \(s\) is not a lattice pyramid, then \(d \leq 2s-1\).
In this paper, we study the extremal case \(d=2s-1\).
More precisely, we characterize Gorenstein simplices of dimension \(2s-1\) and degree \(s\) which are not lattice pyramids in terms of even binary self-complementary codes.
As an application, combining this characterization with existing classification results on reflexive simplices, we classify Gorenstein simplices of degree \(3\) and \(4\).
Equivalently, we classify polarized \(d\)-dimensional Gorenstein fake weighted projective spaces \((X,L)\) satisfying $-K_X=(d-2)L$ or $-K_X=(d-3)L$,
where \(-K_X\) is the anticanonical divisor of \(X\) and \(L\) is a Cartier divisor on \(X\).
\end{abstract}

\maketitle
\section{Introduction}

A \textit{lattice polytope} is a convex polytope all of whose vertices have integer coordinates.
Two lattice polytopes \(\Pc,\Qc\subset \RR^d\) are said to be \textit{unimodularly equivalent} if there exist a unimodular matrix \(U\in \ZZ^{d\times d}\) and a lattice point \(\wb\in \ZZ^d\) such that
\[
\Qc=f_U(\Pc)+\wb,
\]
where \(f_U:\RR^d\to \RR^d\) is the linear map defined by \(f_U(\xb)=\xb U\).
In the study of lattice polytopes, it is standard to consider classification problems up to unimodular equivalence.
Moreover, a lattice simplex \(\Delta\) can be described in terms of its associated finite abelian group \(\Lambda_\Delta\), and unimodular equivalence classes of lattice simplices can be recovered from these groups.
Using associated finite abelian groups and coding-theoretic methods, Batyrev and Hofscheier \cite{BatyrevHof} classified lattice polytopes whose \(h^*\)-polynomials are binomials.
This point of view will play an important role in the present paper, and we recall the precise correspondence in Section~\ref{sec:preliminaries}.

Let \(\Pc \subset \RR^d\) be a lattice polytope of dimension \(d\).
The \textit{lattice pyramid} over \(\Pc\) is the lattice polytope
\[
{\rm Pyr}(\Pc):=\conv(\Pc\times\{0\},(0,\dots,0,1))\subset \RR^{d+1}.
\]
More generally, we say that a lattice polytope is a lattice pyramid if it is obtained by successively taking lattice pyramids over a lower-dimensional lattice polytope.
Since lattice pyramid constructions preserve several important properties and invariants, it is natural to study classification problems for lattice polytopes which are not lattice pyramids.
For this reason, many classification problems in the theory of lattice polytopes are naturally formulated under the assumption that the polytope is not a lattice pyramid.

The \textit{codegree} of \(\Pc\), denoted by \(\codeg(\Pc)\), is defined by
\[
\codeg(\Pc)=\min\{\,n\in \ZZ_{>0} : {\rm int}(n\Pc)\cap \ZZ^d\neq \emptyset\,\},
\]
where \(n\Pc:=\{n\xb:\xb\in \Pc\}\) and \({\rm int}(\Pc)\) denotes the interior of \(\Pc\).
The \textit{degree} of \(\Pc\), denoted by \(\deg(\Pc)\), is defined by
\[
\deg(\Pc)=d+1-\codeg(\Pc).
\]
These invariants play important roles in the study of lattice polytopes and their Ehrhart theory.
We recall in Section~\ref{sec:preliminaries} that \(\deg(\Pc)\) coincides with the degree of the \(h^*\)-polynomial of \(\Pc\).

If one restricts attention to lattice polytopes which are not lattice pyramids, then the degree imposes strong restrictions on the dimension.
More precisely, it was shown in \cite{Nillbound} that if a lattice polytope of dimension \(d\) and degree \(s\) has \(d+c+1\) vertices and is not a lattice pyramid, then
\[
d \leq c(2s+1)+4s-2.
\]
In particular, for lattice simplices one has
\[
d \leq 4s-2.
\]
This naturally leads to the extremal case
\[
d=4s-2
\]
for lattice simplices.
Higashitani \cite{HigashitaniGor} characterized lattice simplices in this extremal case and showed that they arise from binary simplex codes.
Thus binary codes already appear naturally in the study of lattice simplices at the boundary of the dimension-degree inequality.

On the other hand, Gorenstein polytopes form one of the most important classes of lattice polytopes.
A lattice polytope \(\Pc\subset \RR^d\) is called \textit{reflexive} if the origin of \(\RR^d\) belongs to the interior of \(\Pc\) and the dual polytope of \(\Pc\) is again a lattice polytope.
A lattice polytope \(\Pc\) is called \textit{Gorenstein of index \(r\)} if \(r\Pc\) is unimodularly equivalent to a reflexive polytope.
If \(\Pc\) is Gorenstein of index \(r\), then \(\codeg(\Pc)=r\).
Moreover, Gorenstein polytopes are characterized by the palindromicity of their \(h^*\)-polynomials.
They arise naturally in several areas such as toric geometry, commutative algebra, and mirror symmetry, and in each fixed dimension there exist only finitely many Gorenstein polytopes up to unimodular equivalence.
For these reasons, the classification of Gorenstein polytopes has been regarded as a fundamental problem.
For example, several classification results on Gorenstein simplices based on associated finite abelian groups were obtained in \cite{HibiTsuchiyaYoshida,HigashitaniNillTsuchiya,TsuchiyaGorsimp}.

For Gorenstein polytopes, a stronger bound is known: if a Gorenstein polytope of dimension \(d\) and degree \(s\) is not a lattice pyramid, then \(d\le 3s-1\) \cite{Nilldegreebound}.
If one restricts further to Gorenstein simplices, then a much stronger bound is available.
In fact, it was shown in \cite{polyadj} that if a Gorenstein simplex of dimension \(d\) and degree \(s\) is not a lattice pyramid, then
\[
d \leq 2s-1.
\]
Thus, for Gorenstein simplices, the extremal case is
\[
d=2s-1.
\]
The aim of this paper is to study this extremal case.
More precisely, we investigate Gorenstein simplices of dimension \(2s-1\) and degree \(s\) which are not lattice pyramids.
Although this situation is much more restrictive than Higashitani's extremal case for arbitrary lattice simplices, binary codes still govern the classification.
Our first main result shows that such simplices are characterized by even binary self-complementary codes.
Here a binary self-complementary code means a binary linear code containing the all-one vector.

\begin{Theorem}\label{thm:intro-main}
Let \(\Delta\) be a \((2s-1)\)-dimensional Gorenstein simplex of degree \(s\) which is not a lattice pyramid.
Then there exists an even binary self-complementary code \(C \subset \mathbb{F}_2^{2s}\) such that the associated finite abelian group of \(\Delta\) is of the form
\[
\Lambda_\Delta=
\left\{
\frac{1}{2}\cb \in [0,1)^{2s} : \cb \in C
\right\}.
\]
Conversely, every even binary self-complementary code of length \(2s\) arises from a \((2s-1)\)-dimensional Gorenstein simplex of degree \(s\) which is not a lattice pyramid.
\end{Theorem}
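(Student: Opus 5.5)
The plan is to work entirely on the side of the associated finite abelian group \(\Lambda_\Delta\subset[0,1)^{2s}\), using the standard dictionary from Section~\ref{sec:preliminaries}. We write \(\Delta=\conv(\vb_0,\dots,\vb_{2s-1})\) and \(\Lambda_\Delta=\{\lambda\in[0,1)^{2s}:\sum_i\lambda_i(\vb_i,1)\in\ZZ^{2s}\}\), and we define \(\hei(\lambda)=\sum_i\lambda_i\). The facts we use are the following.
\begin{itemize}
\item[(a)] \(h^*_i=\#\{\lambda\in\Lambda_\Delta:\hei(\lambda)=i\}\), so every height lies in \(\{0,\dots,s\}\) when \(\deg\Delta=s\).
\item[(b)] \(\Delta\) is not a lattice pyramid if and only if for every \(i\) some \(\lambda\in\Lambda_\Delta\) has \(\lambda_i\neq 0\).
\item[(c)] \(\Delta\) is Gorenstein of index \(r=\codeg\Delta\) if and only if \(\Lambda_\Delta\) contains a unique element \(\lambda^*\) with all coordinates positive and height \(r\), and \(h^*\) is palindromic.
\end{itemize}
Here \(r=2s-s=s\).

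\emph{Step 1: every coordinate projection has order \(2\).} For each \(i\), let \(o_i\) be the order of the image of \(\Lambda_\Delta\) under the \(i\)-th coordinate projection. By (b), \(o_i\ge 2\). Summing over the group gives
\[
\sum_{\lambda\in\Lambda_\Delta}\lambda_i=|\Lambda_\Delta|\cdot\frac{1-1/o_i}{2},
\]
because the \(i\)-th coordinate is equidistributed on \(\{0,1/o_i,\dots,(o_i-1)/o_i\}\). Hence the average height is \(\frac12\sum_i(1-1/o_i)\). On the other hand, \(h^*\) is palindromic of degree \(s\) by (c), so the average height equals \(s/2\). This gives \(\sum_{i=0}^{2s-1}(1-1/o_i)=s\). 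Each summand is at least \(1/2\), and there are \(2s\) summands, so equality forces \(o_i=2\) for all \(i\). Consequently every \(\lambda\in\Lambda_\Delta\) has coordinates in \(\{0,1/2\}\), and \(\Lambda_\Delta=\frac12 C\) for a subset \(C\subset\FF_2^{2s}\). Since addition in \(\Lambda_\Delta\) is taken modulo \(1\), \(C\) is closed under addition, so \(C\) is a binary linear code.

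\emph{Step 2: \(C\) is even and self-complementary.} Each height \(\hei(\frac12\cb)=\wt(\cb)/2\) is an integer, so every weight is even. The Gorenstein element \(\lambda^*\) has all coordinates positive, hence equal to \(1/2\), so \(\lambda^*=\frac12\mathbf 1\) and \(\mathbf 1\in C\). This proves the first part of Theorem~\ref{thm:intro-main}.

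\emph{Step 3: the converse.} Let \(C\subset\FF_2^{2s}\) be even with \(\mathbf 1\in C\). Then \(\Lambda=\frac12 C\) is a finite subgroup of \((\RR/\ZZ)^{2s}\) all of whose elements have integer coordinate sum, since the weights are even. By the correspondence recalled in Section~\ref{sec:preliminaries}, such a subgroup is \(\Lambda_\Delta\) for some lattice simplex \(\Delta\) of dimension \(2s-1\).
\begin{itemize}
\item Since \(\mathbf 1\in C\), every coordinate is nonzero for some element, so \(\Delta\) is not a lattice pyramid by (b).
\item Heights are \(\wt(\cb)/2\le s\), with equality only at \(\cb=\mathbf 1\). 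Hence \(\deg\Delta=s\) and \(\codeg\Delta=s\).
\item The complementation \(\cb\mapsto\cb+\mathbf 1\) sends height \(i\) to height \(s-i\). So \(h^*\) is palindromic, and \(\frac12\mathbf 1\) is the unique element with full support at height \(s\).
\end{itemize}
By (c), \(\Delta\) is Gorenstein.

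The main obstacle is making sure the Gorenstein criterion (c) is available in exactly the form needed. If the paper's preliminaries only state palindromicity of \(h^*\), Step 1 still goes through, since it uses only palindromicity. Step 2 then identifies \(\lambda^*\) directly: the element of height \(s\) with full support must have all coordinates equal to \(1/2\). For the converse, palindromicity of \(h^*\) suffices by Stanley's theorem.
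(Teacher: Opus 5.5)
Your proof is correct, and it reaches the key fact (all coordinates of elements of $\Lambda_\Delta$ lie in $\{0,\frac12\}$) by a genuinely different mechanism from the paper.

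The paper works through the Gorenstein cone:
\begin{enumerate}
\item Lemma~\ref{lem:lambda-one-implies-pyramid} shows that the barycentric coordinates of the interior point of $r\Delta$ lie in $(0,1)$ when $\Delta$ is not a pyramid.
\item Lemma~\ref{lem:full-support-top} deduces that the unique element $\xb$ of height $s$ has full support.
\item Then $\hei(-\xb)=2s-s=s$ forces $\xb=-\xb=(\frac12,\dots,\frac12)$.
\item An induction on $k$ gives $\xb+H_k=H_{s-k}$, and the count $m(\yb)=2k$ pins every coordinate to $\{0,\frac12\}$.
\end{enumerate}

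You instead compute the average height over $\Lambda_\Delta$ in two ways. Palindromicity gives $s/2$, and equidistribution of each coordinate homomorphism gives $\frac12\sum_i(1-1/o_i)$. The inequality $1-1/o_i\ge\frac12$ then forces $o_i=2$ at once, and evenness and the all-$\frac12$ element follow easily.

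What each route buys:
\begin{itemize}
\item Your route uses only palindromicity of $h^*$ together with Lemmas~\ref{lem:hstar-lambda} and~\ref{lem:pyramid-lambda}. It avoids the cone argument and the induction entirely.
\item As a bonus, it reproves the bound $d\le 2s-1$ for non-pyramidal Gorenstein simplices, together with its equality case. This is because $\sum_{i=0}^{d}(1-1/o_i)=s$ holds for every Gorenstein simplex of degree $s$.
\item The paper's route, in exchange, produces Lemmas~\ref{lem:lambda-one-implies-pyramid} and~\ref{lem:full-support-top}, which are statements about Gorenstein simplices of arbitrary dimension.
\end{itemize}

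One correction: your fact (c) is false as a general biconditional. A Gorenstein lattice pyramid has a coordinate vanishing identically on $\Lambda_\Delta$; for a unimodular simplex, $\Lambda_\Delta$ is trivial. So no element has all coordinates positive. For non-pyramids, the existence of such an element is exactly what Lemma~\ref{lem:lambda-one-implies-pyramid} supplies, and you never prove it.

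You do not need it, though. After Step 1, an element of height $s$ exists because $h^*_s\neq 0$. Its $2s$ coordinates lie in $\{0,\frac12\}$ and sum to $s$, so it equals $\frac12\mathbf{1}_{2s}$. Full support is a consequence here, not an extra input. For the converse, palindromicity together with the De Negri--Hibi criterion suffices, exactly as your fallback says. The proof should be written with that fallback as the main line rather than relying on (c).
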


Thus, in the extremal case \(d=2s-1\), the classification of Gorenstein simplices is reduced to the classification of even binary self-complementary codes.
Moreover, the corresponding \(h^*\)-polynomials are determined by the weight distributions of these codes (Corollary \ref{cor:weight}).

Batyrev and Juny \cite{BatyrevJuny} classified Gorenstein toric Del Pezzo varieties in arbitrary dimension; equivalently, they classified Gorenstein polytopes of degree \(2\).
On the other hand, for Gorenstein polytopes of degree at least \(3\), classification problems become much more difficult in general.
As applications of Theorem~\ref{thm:intro-main}, we obtain classifications of Gorenstein simplices of degrees \(3\) and \(4\).
Equivalently, we classify polarized \(d\)-dimensional Gorenstein fake weighted projective spaces \((X,L)\) satisfying
\[
-K_X=(d-2)L \quad \text{or} \quad -K_X=(d-3)L,
\]
where \(-K_X\) is the anticanonical divisor of \(X\) and \(L\) is a Cartier divisor on \(X\).

Classical classification results are available for reflexive polytopes in low dimensions, and hence also for reflexive simplices.
In particular, Kreuzer and Skarke \cite{reflexive3,reflexive4} showed that there exist exactly \(4{,}319\) unimodular equivalence classes of reflexive polytopes in dimension \(3\) and exactly \(473{,}800{,}776\) unimodular equivalence classes of reflexive polytopes in dimension \(4\).
On the other hand, if a Gorenstein simplex of degree \(3\) is not a lattice pyramid, then its dimension is at most \(5\).
Hence only the cases of dimensions \(3\), \(4\), and \(5\) can occur.
In dimension \(3\), they are precisely reflexive simplices, and there exist exactly \(48\) unimodular equivalence classes.
In dimension \(4\), they are precisely Gorenstein simplices of index \(2\) which are not lattice pyramids, and there exist exactly \(13\) unimodular equivalence classes.
Therefore the only remaining case is dimension \(5\), which is precisely the extremal case covered by Theorem~\ref{thm:intro-main}.
In this way, we obtain the following explicit classification.

\begin{Theorem}\label{thm:class3_intro}
There exist exactly \(6\) unimodular equivalence classes of \(5\)-dimensional Gorenstein simplices of degree \(3\) which are not lattice pyramids.
\end{Theorem}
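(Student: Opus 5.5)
By Theorem~\ref{thm:intro-main}, counting the \(5\)-dimensional Gorenstein simplices of degree \(3\) that are not lattice pyramids, up to unimodular equivalence, is the same as counting even binary self-complementary codes \(C\subset\FF_2^{6}\) up to the equivalence that corresponds to unimodular equivalence. Since \(\Lambda_\Delta\) determines \(\Delta\) only up to a permutation of the \(d+1=6\) barycentric coordinates, I expect this equivalence to be exactly permutation equivalence of codes, i.e.\ the action of \(S_6\) on coordinates. Two simplices are unimodularly equivalent precisely when their associated groups agree after permuting coordinates, and the map \(\cb\mapsto \tfrac12\cb\) commutes with that action. So the plan is to enumerate even codes of length \(6\) containing \(\mathbf 1=(1,\dots,1)\) up to coordinate permutation, and to check that there are six classes.

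There is one point to settle first: whether the correspondence in Theorem~\ref{thm:intro-main} also requires a condition such as \(C\) having no coordinate identically zero, which would encode ``not a lattice pyramid''. The converse part of the theorem says every even self-complementary code arises, and \(\mathbf 1\in C\) already rules out a zero coordinate. So I will take all such codes.

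I would enumerate by dimension \(k\). Since \(\mathbf 1\in C\), the quotient \(C/\langle \mathbf 1\rangle\) is a code on which complementation acts, and evenness forces every weight to lie in \(\{0,2,4,6\}\).

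\begin{itemize}
\item[\(k=1\).] \(C=\{0,\mathbf 1\}\): one class.
\item[\(k=2\).] \(C\) is spanned by \(\mathbf 1\) and one vector of weight \(2\) (its complement has weight \(4\)): one class.
\item[\(k=3\).] \(C\) is spanned by \(\mathbf 1\) and two vectors \(u,v\), and can be taken to be \(\{0,\mathbf 1,u,\bar u,v,\bar v,u+v,\overline{u+v}\}\), where \(\bar x=x+\mathbf 1\). Replacing vectors by complements, choose weight-\(2\) representatives \(u,v\), so \(u+v\) has weight \(0\), \(2\) or \(4\). If \(|u\cap v|=1\), then \(u+v\) has weight \(2\) and \(C\) is generated by \(\mathbf 1\) and \(110000,011000\). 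If \(u\cap v=\emptyset\), then \(u+v\) has weight \(4\), its complement has weight \(2\), and \(C\) is generated by \(\mathbf 1\) and \(110000,001100\). In the first case the three weight-\(2\) vectors \(u,v,u+v\) together support only \(3\) coordinates; in the second the weight-\(2\) vectors \(110000,001100,000011\) are pairwise disjoint. Thus there are two classes, which I distinguish by how the weight-\(2\) vectors overlap.
\item[\(k=4\).] The dual code has dimension \(2\), contains \(\mathbf 1\) (since \(C\) is even), and is contained in the even-weight code (since \(\mathbf 1\in C\)). By the \(k=2\) case, \(C^\perp\) is generated by \(\mathbf 1\) and a weight-\(2\) vector: one class.
\item[\(k=5\).] \(C\) is the whole even-weight code: one class.
\end{itemize}
The total is \(1+1+2+1+1=6\).

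The main obstacle is to justify that unimodular equivalence corresponds exactly to coordinate permutation of \(\Lambda_\Delta\). This should follow from the correspondence recalled in Section~\ref{sec:preliminaries}. The other delicate step is the \(k=3\) case analysis, where the invariant distinguishing the two classes must be verified carefully; as a cross-check I would compute the weight enumerators, or equivalently the \(h^*\)-polynomials via Corollary~\ref{cor:weight}.
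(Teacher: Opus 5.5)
Your argument is correct, but it is organized differently from the paper's. The paper does not sort codes by dimension. It works with the graph $G_\Delta$ of weight-$2$ codewords and uses Lemma~\ref{lem:graph-closure} to show that $G_\Delta$ is a disjoint union of cliques whose maximum matching number is not $2$, which leaves six partitions of $6$. It then recovers $\Lambda_\Delta$ from $H_1$ via $H_2=\xb+H_1$ (Proposition~\ref{prop:H_i-determine}). You instead enumerate by $k=\dim C$ and use the duality $C\mapsto C^\perp$. This duality preserves even self-complementary codes of length $2s$, since evenness of $C$ is equivalent to $\mathbf 1\in C^\perp$ and $\mathbf 1\in C$ is equivalent to evenness of $C^\perp$, and it sends $k$ to $2s-k$. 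That reduces the hand work to $k\le 3$, and existence of each class comes for free from the converse half of Theorem~\ref{thm:intro-main}. The two lists agree: $k=1,\dots,5$ give $G_\Delta\cong\emptyset$, $K_2$, $K_3$ or $3K_2$, $K_4\sqcup K_2$, and $K_6$ respectively. Your route is shorter for $s=3$. The paper's clique-and-matching analysis is the one it extends to $s=4$ via the hypergraph $H_2$; there your duality would still leave the middle dimension $k=4$, which is fixed by $k\mapsto 8-k$, to be done by hand.

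Two small remarks. First, the step you call the main obstacle is exactly the second sentence of Proposition~\ref{prop:delta-lambda-correspondence}, because $\cb\mapsto\frac12\cb$ is an $S_6$-equivariant bijection $C\to\Lambda_\Delta$. Second, your weight-enumerator cross-check cannot separate the two $k=3$ classes, since both give $h^*(\Delta,t)=1+3t+3t^2+t^3$. There it is the overlap pattern of the weight-$2$ codewords that does the work, as in your main argument.
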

We record the result here and refer to its full form in Theorem~\ref{thm:degree3-classification}.
Combined with the known classifications in dimensions \(3\) and \(4\), this yields the complete classification of Gorenstein simplices of degree \(3\).

While the classification of reflexive polytopes in higher dimensions is still very difficult in general, Ghirlanda \cite{classrefsim} recently classified reflexive simplices up to dimension \(6\); see also \cite{GhirlandaData}.
On the other hand, if a Gorenstein simplex of degree \(4\) is not a lattice pyramid, then its dimension is at most \(7\).
Hence only the cases of dimensions \(4\), \(5\), \(6\), and \(7\) can occur.
In dimension \(4\), they are precisely reflexive simplices, and there exist exactly \(1{,}561\) unimodular equivalence classes.
In dimension \(5\), they are precisely Gorenstein simplices of index \(2\) which are not lattice pyramids, and there exist exactly \(264\) unimodular equivalence classes.
In dimension \(6\), they are precisely Gorenstein simplices of index \(3\) which are not lattice pyramids, and there exist exactly \(47\) unimodular equivalence classes.
Therefore the only remaining case is dimension \(7\), which is again the extremal case covered by Theorem~\ref{thm:intro-main}.
This yields the following classification.

\begin{Theorem}\label{thm:class4_intro}
There exist exactly \(19\) unimodular equivalence classes of \(7\)-dimensional Gorenstein simplices of degree \(4\) which are not lattice pyramids.
\end{Theorem}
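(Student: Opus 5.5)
By Theorem~\ref{thm:intro-main}, classifying \(7\)-dimensional Gorenstein simplices of degree \(4\) that are not lattice pyramids amounts to classifying even binary self-complementary codes \(C\subset\FF_2^{8}\), up to the equivalence that corresponds to unimodular equivalence. The plan is to fix that equivalence, enumerate the codes, and count. We take from Section~\ref{sec:preliminaries} that two simplices are unimodularly equivalent if and only if their associated groups \(\Lambda_\Delta\) agree after a permutation of coordinates. For groups of the form \(\{\frac12\cb:\cb\in C\}\), this means the codes are equivalent under coordinate permutations, so we must count permutation-equivalence classes of even self-complementary codes of length \(8\).

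We also need to confirm that every such code, not just some of them, gives a simplex that is not a lattice pyramid. The converse part of Theorem~\ref{thm:intro-main} supplies exactly this.

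The enumeration goes by dimension \(k=\dim C\), with \(1\le k\le 7\), since \(C\) is contained in the even-weight code \(E_8\), which has dimension \(7\).

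\begin{itemize}
\item \(k=1\): the only code is \(\{0,\mathbf 1\}\).
\item \(k=7\): the only code is \(E_8\).
\item \(k=2\): \(C=\langle\mathbf 1,\cb\rangle\) with \(\cb\) even. Replacing \(\cb\) by \(\cb+\mathbf 1\) if necessary, we may take \(\mathrm{wt}(\cb)\in\{2,4\}\), giving \(2\) classes.
\item \(k=6\): \(C\) is the dual of a code \(C^\perp\) of dimension \(2\) that contains \(\mathbf 1\) and is even (since \(\mathbf 1\in C\) forces \(C^\perp\) even and vice versa). This mirrors the \(k=2\) case and gives \(2\) classes.
\item \(k=5\) and \(k=3\): these correspond to each other under duality in the same way. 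For \(k=3\), write \(C=\langle\mathbf 1,\ab,\bb\rangle\). After the complementation normalization, the class is determined by how the supports of \(\ab\), \(\bb\) and \(\ab+\bb\) partition the \(8\) coordinates: the coordinates split into four cells according to the pattern of the two generators. Counting orbits of such cell sizes under \(GL\)-changes of generator gives the number of \(k=3\) classes.
\item \(k=4\): this is the self-dual-dimension case, and it includes the extended Hamming code \([8,4,4]\) along with several decomposable codes.
\end{itemize}

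A uniform way to organize all of this: a code of dimension \(k\) containing \(\mathbf 1\) is determined by a multiset of \(8\) columns in \(\FF_2^{k}\) spanning the space, with the coordinate functional corresponding to \(\mathbf 1\). The evenness condition constrains these column multisets, and we count them modulo \(GL_k\) fixing \(\mathbf 1\).

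The main obstacle is getting the counts in dimensions \(3\), \(4\), and \(5\) exactly right, so that the total comes out to \(19\). I would handle this by computing orbits directly, either by hand using the column-multiset description or by a short computer enumeration over subspaces of \(E_8\) containing \(\mathbf 1\), up to \(S_8\). As a cross-check, I would use the weight enumerators via Corollary~\ref{cor:weight}: distinct \(h^*\)-polynomials separate most classes, and the remaining coincidences can be checked by hand. The expected tally is \(1+2+c_3+c_4+c_5+2+1=19\), with \(c_3=c_5\) by duality.
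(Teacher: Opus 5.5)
Your reduction is sound. By Theorem~\ref{thm:intro-main} and Proposition~\ref{prop:delta-lambda-correspondence}, you must count permutation classes of even self-complementary codes $C\subset\FF_2^8$. Your duality remark ($C$ is even iff $\mathbf 1\in C^\perp$, and $\mathbf 1\in C$ iff $C^\perp$ is even) correctly gives $c_k=c_{8-k}$, hence $c_1=c_7=1$ and $c_2=c_6=2$.

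The gap is that the proposal stops exactly where the content of the theorem begins. You never compute $c_3$ or $c_4$. Your $k=3$ bullet describes a method without executing it. Your $k=4$ bullet (``the extended Hamming code along with several decomposable codes'') is not a count. The line ``the expected tally is $\dots=19$'' assumes the conclusion. What you have actually shown is that the number of classes equals $6+2c_3+c_4$, with no argument that $2c_3+c_4=13$; an unreported computer search does not supply one. Your weight-enumerator cross-check also cannot by itself separate all classes: $\Delta^{(4)}_{10}$ and $\Delta^{(4)}_{12}$, both with $\dim C=4$, share $h^*=1+4t+6t^2+4t^3+t^4$.

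Your column-multiset framework can close the gap by hand. With first row $\mathbf 1$, the columns are points of $AG(k-1,2)$ and the symmetry group is $AGL(k-1,2)$. Evenness says every affine hyperplane carries even total multiplicity. The sorted list of multiplicities is a permutation invariant, since coordinates $i,j$ have equal columns iff $\eb_i+\eb_j\in C^\perp$.

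For $k=3$, the four cell sizes must share a parity, and at least three must be nonzero. Since $AGL(2,2)\cong S_4$ permutes the cells arbitrarily, the classes are $\{2,2,2,2\}$, $\{4,2,2,0\}$, $\{5,1,1,1\}$, $\{3,3,1,1\}$. So $c_3=c_5=4$.

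For $k=4$, the set of points of odd multiplicity is orthogonal to $\mathbf 1$ and to every plane of $AG(3,2)$. It therefore lies in the self-dual code $RM(1,3)$, so it is empty, a plane $P$, or everything. Since spanning rules out all mass lying on $P$, this leaves five configurations:
\begin{enumerate}
\item multiplicity $1$ everywhere (the extended Hamming code);
\item $(1,1,1,1)$ on $P$, with $4$ at one point off $P$;
\item $(1,1,1,1)$ on $P$, with $2,2$ at two points off $P$;
\item $(3,1,1,1)$ on $P$, with $2$ at one point off $P$;
\item multiplicity $2$ on four non-coplanar points.
\end{enumerate}
Each is a single $AGL(3,2)$-orbit with a distinct multiplicity profile. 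So $c_4=5$, and the total is $1+2+4+5+4+2+1=19$.

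With this supplied, your stratification by $\dim C$ plus duality is a genuinely different route from the paper's. The paper instead stratifies by the graph $G_\Delta$ of weight-$2$ codewords, prunes it to $13$ graphs via Lemma~\ref{lem:graph-closure}, and then determines $H_2$ case by case.
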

We record the result here and refer to its full form in Theorem~\ref{thm:degree4-classification}.
Combined with the known classifications in dimensions \(4\), \(5\), and \(6\), this yields the complete classification of Gorenstein simplices of degree \(4\).

Once Theorem~\ref{thm:intro-main} is established, the classifications in Theorems~\ref{thm:class3_intro} and \ref{thm:class4_intro}, as well as analogous classifications for larger values of \(s\), can in principle be obtained by computer search. In this paper, however, we give theoretical proofs for the cases \(s=3\) and \(4\).

The paper is organized as follows.
In Section~\ref{sec:preliminaries}, we recall basic facts on \(h^*\)-polynomials, Gorenstein polytopes, finite abelian groups associated to lattice simplices, and binary linear codes.
In Section~\ref{sec:main}, we prove the classification of \((2s-1)\)-dimensional Gorenstein simplices of degree \(s\) in terms of even binary self-complementary codes.
In Section~\ref{sec:degree3}, we apply this result to classify Gorenstein simplices of degree \(3\) and \(4\).

\subsection*{Acknowledgment}
The author is grateful to Marco Ghirlanda for helpful correspondence and for providing numerical data on non-pyramidal Gorenstein simplices in small dimensions.
This work was supported by JSPS KAKENHI 22K13890 and 26K00618.

\section{Preliminaries}\label{sec:preliminaries}

In this section, we recall basic facts on \(h^*\)-polynomials, Gorenstein polytopes, finite abelian groups associated to lattice simplices, and binary linear codes.

\subsection{Ehrhart theory and Gorenstein polytopes}
Let $\Pc \subset \RR^d$ be a lattice polytope of dimension $d$.
Given a positive integer $k$, we define
$$L_{\Pc}(k)=|k \Pc \cap \ZZ^d|.$$
The study on $L_{\Pc}(k)$ originated in Ehrhart \cite{Ehrhart} who proved that $L_{\Pc}(k)$ is a polynomial in $k$ of degree $d$ with the constant term $1$.
We call $L_{\Pc}(k)$ the \textit{Ehrhart polynomial} of $\Pc$.
The generating function of the lattice point enumerator, i.e., the formal power series
$$\text{Ehr}_\Pc(t)=1+\sum\limits_{k=1}^{\infty}L_{\Pc}(k)t^k$$
is called the \textit{Ehrhart series} of $\Pc$.
It is known that it can be expressed as a rational function of the form
$$\text{Ehr}_\Pc(t)=\frac{h^*(\Pc,t)}{(1-t)^{d+1}},$$
where $h^*(\Pc,t)$ is a polynomial in $t$ of degree at most $d$ with nonnegative integer coefficients \cite{Stanley_nonnegative} and it
is called
the \textit{$h^*$-polynomial} of $\Pc$. 
Moreover, $$h^*(\Pc,t)=\sum_{i=0}^{d} h_i^* t^i$$ satisfies $h^*_0=1$, $h^*_1=|\Pc \cap \ZZ^d|-(d +1)$ and $h^*_{d}=|{\rm int} (\Pc) \cap \ZZ^d|$.
Furthermore, $h^*(\Pc,1)=\sum_{i=0}^{d} h_i^*$ is equal to the normalized volume of $\Pc$.
Note that $\deg(\Pc)$ coincides with the degree of $h^*(\Pc,t)$.
It is well-known that $h^*(\Pc,t)=h^*({\rm Pyr}(\Pc),t)$.
We refer the reader to \cite{BeckRobins2015book} for the detailed information about Ehrhart polynomials and $h^*$-polynomials.

A lattice polytope $\Pc \subset \RR^d$ of dimension $d$ is called \textit{reflexive} if the origin of $\RR^d$ belongs to the interior of $\Pc$ and its dual polytope 
\[\Pc^\vee:=\{\yb \in \RR^d  :  \langle \xb,\yb \rangle \leq 1 \ \text{for all}\  \xb \in \Pc \}\]
is also a lattice polytope, where $\langle \xb,\yb \rangle$ is the usual inner product of $\RR^d$.
A lattice polytope \(\Pc\) is called \textit{Gorenstein of index \(r\)} if \(r\Pc\) is unimodularly equivalent to a reflexive polytope.
We can characterize Gorenstein polytopes by their $h^*$-polynomials.
\begin{Lemma}[\cite{DeNegriHibi}]
Let $\Pc$ be a lattice polytope of degree $s$, and write
\[
h^*(\Pc,t)=h_0^*+h_1^*t+\cdots+h_s^*t^s.
\]
Then $\Pc$ is Gorenstein if and only if $h^*(\Pc,t)$ is palindromic, namely,
for any $0 \leq i \leq s$, one has
\[
h^*_i=h^*_{s-i}.
\]
\end{Lemma}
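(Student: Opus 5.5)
The statement is the De Negri--Hibi criterion, and the plan is to derive it from Ehrhart--Macdonald reciprocity together with a lattice-distance argument on the cone over \(\Pc\). Let \(d=\dim\Pc\), \(s=\deg(\Pc)\) and \(r=\codeg(\Pc)=d+1-s\).

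\emph{Step 1: translate palindromicity into a counting identity.} Reciprocity gives
\[
1+\sum_{k\ge1}|{\rm int}(k\Pc)\cap\ZZ^d|\,t^k=(-1)^{d+1}\mathrm{Ehr}_\Pc(1/t)=\frac{t^{d+1}h^*(\Pc,1/t)}{(1-t)^{d+1}} .
\]
Since \(\deg h^*=s\), the numerator equals \(t^{r}\,t^{s}h^*(\Pc,1/t)\), so the left side has no \(t^k\) terms for \(k<r\). Hence \(h^*\) is palindromic if and only if \(t^sh^*(\Pc,1/t)=h^*(\Pc,t)\), which is equivalent to
\[
|{\rm int}(k\Pc)\cap\ZZ^d| = |(k-r)\Pc\cap\ZZ^d| \qquad\text{for all } k\ge r. \tag{$\ast$}
\]
Here \(0\Pc\) counts as the single point \(\{0\}\).

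\emph{Step 2: Gorenstein implies palindromic.} Write the facets of \(\Pc\) as \(\ell_F(\xb)\ge b_F\) with primitive integral \(\ell_F\). Suppose \(r\Pc-\wb\) is reflexive for a lattice point \(\wb\); then \(\ell_F(\wb)=rb_F+1\) for every facet \(F\). Take a lattice point \(\xb\in{\rm int}(k\Pc)\). Integrality gives \(\ell_F(\xb)\ge kb_F+1\), so \(\ell_F(\xb-\wb)\ge (k-r)b_F\) for all \(F\), i.e. \(\xb-\wb\in(k-r)\Pc\). The converse inclusion \(\wb+(k-r)\Pc\subset{\rm int}(k\Pc)\) is immediate. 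So \(\yb\mapsto\yb+\wb\) is a bijection, which proves \((\ast)\).

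\emph{Step 3: palindromic implies Gorenstein.} Apply \((\ast)\) with \(k=r\): \({\rm int}(r\Pc)\) contains exactly one lattice point \(\wb\). For every \(k\ge r\), the map \(\yb\mapsto\yb+\wb\) sends \((k-r)\Pc\cap\ZZ^d\) injectively into \({\rm int}(k\Pc)\cap\ZZ^d\), because \(\wb\) is interior to \(r\Pc\). By \((\ast)\) the two sets have the same size, so this map is a bijection. It remains to show \(h_F:=\ell_F(\wb)-rb_F=1\) for every facet \(F\); this says exactly that \(r\Pc-\wb\) is reflexive.

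This last step is the main obstacle, and I would handle it by passing to the cone \(C=\mathrm{cone}(\Pc\times\{1\})\subset\RR^{d+1}\), whose facets have primitive functionals \(L_F(\xb,t)=\ell_F(\xb)-tb_F\). The bijections above combine into \({\rm int}(C)\cap\ZZ^{d+1}=(\wb,r)+C\cap\ZZ^{d+1}\). Every point on the right satisfies \(L_F\ge h_F\). Now suppose some \(h_F\ge2\). Since \(L_F\) is primitive, there is a lattice point \(\mathbf z\) with \(L_F(\mathbf z)=1\). Let \(\mathbf v\) be the sum of the lattice generators of \(C\) lying on \(F\); it satisfies \(L_F(\mathbf v)=0\) and \(L_G(\mathbf v)>0\) for every other facet \(G\). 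For \(N\gg0\), the point \(\mathbf z+N\mathbf v\) therefore lies in \({\rm int}(C)\) and has \(L_F=1<h_F\). This contradicts the description of \({\rm int}(C)\cap\ZZ^{d+1}\), so every \(h_F=1\).
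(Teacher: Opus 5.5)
Your proof is correct, and it takes a different route from the one behind the citation; the paper itself gives no proof of this lemma and simply quotes it from De Negri--Hibi. The standard argument there is algebraic. One passes to the Ehrhart ring $\mathbb{K}[\Pc]$, which is a normal, hence Cohen--Macaulay, domain whose canonical module is spanned by the lattice points of ${\rm int}(C)$. Stanley's theorem says that a Cohen--Macaulay domain is Gorenstein if and only if its Hilbert series satisfies $F(1/t)=(-1)^{d+1}t^{\rho}F(t)$, and for $F=\mathrm{Ehr}_\Pc$ this is exactly palindromicity of $h^*$. Gorensteinness of $\mathbb{K}[\Pc]$, i.e.\ ${\rm int}(C)\cap\ZZ^{d+1}=\cb+C\cap\ZZ^{d+1}$ for some lattice point $\cb$, is then identified with reflexivity of a translate of $r\Pc$.

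You replace Stanley's theorem by an elementary counting step. Reciprocity turns palindromicity into $(\ast)$. Because translation by the interior point $\wb$ is visibly injective, equal cardinalities upgrade to the bijection ${\rm int}(C)\cap\ZZ^{d+1}=(\wb,r)+C\cap\ZZ^{d+1}$, which is the statement that the canonical module is principal. Your $\mathbf z+N\mathbf v$ argument is the standard proof that such a point has lattice distance one from every facet. Your route is self-contained and needs only Ehrhart--Macdonald reciprocity. The algebraic route is more general: it works for any Cohen--Macaulay domain, not only semigroup rings whose canonical module is this explicit.

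Two small repairs are needed. First, the reciprocity identity should read $\sum_{k\ge1}|{\rm int}(k\Pc)\cap\ZZ^d|\,t^k=(-1)^{d+1}\mathrm{Ehr}_\Pc(1/t)$, without the leading $1$. The right-hand side has no constant term (its lowest term is $h_s^*t^r$), and your next sentence already uses the corrected form. Second, in Step 2 the $r$ for which $r\Pc-\wb$ is reflexive is a priori the Gorenstein index, whereas Step 1 uses $r=\codeg(\Pc)$. Your own computation shows these coincide: for $1\le k<r$, a lattice point $\xb\in{\rm int}(k\Pc)$ would give $(\xb-\wb,k-r)\in C$ at negative height, while $\wb\in{\rm int}(r\Pc)$. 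You should say this explicitly.
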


\subsection{Finite abelian groups associated to lattice simplices}

Let
\[
\Delta=\conv(\vb_0,\dots,\vb_d)\subset \RR^d
\]
be a lattice simplex of dimension \(d\).
We define the associated finite abelian group of \(\Delta\) by
\[
\Lambda_\Delta=
\left\{
(x_0,\dots,x_d)\in [0,1)^{d+1}:
\sum_{i=0}^d x_i(\vb_i,1)\in \ZZ^{d+1}
\right\}.
\]
The group operation is addition modulo \(1\) in each coordinate.
For
\[
\xb=(x_0,\dots,x_d)\in \Lambda_\Delta,
\]
we define the \textit{height} of \(\xb\) by
\[
\hei(\xb):=\sum_{i=0}^d x_i.
\]
Since
\[
\sum_{i=0}^d x_i(\vb_i,1)\in \ZZ^{d+1},
\]
the number \(\hei(\xb)\) is always an integer.
For \(\xb=(x_0,\dots,x_d)\in \Lambda_\Delta\), we denote by \(-\xb\) the inverse of \(\xb\) in \(\Lambda_\Delta\).
Moreover, if \(0<x_i<1\) for all \(i\), then
\[
-\xb=(1-x_0,\dots,1-x_d)
\]
and $\hei(-\xb)=d+1-\hei(\xb)$.

The following facts are well-known.
\begin{Lemma}\label{lem:hstar-lambda}
Let \(\Delta\) be a lattice simplex of dimension \(d\), and write
\[
h^*(\Delta,t)=h_0^*+h_1^*t+\cdots+h_d^*t^d.
\]
Then for any $0 \leq i \leq d$, one has
\[
h_i^*=|\{\xb\in \Lambda_\Delta:\hei(\xb)=i\}|.
\]
\end{Lemma}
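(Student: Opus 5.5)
The statement to prove is Lemma~\ref{lem:hstar-lambda}: \(h_i^*\) counts the elements of \(\Lambda_\Delta\) of height \(i\). The plan is the standard cone-decomposition argument for simplices. Let \(C\subset\RR^{d+1}\) be the cone over \(\Delta\times\{1\}\), spanned by the linearly independent vectors \(\wb_i=(\vb_i,1)\), \(i=0,\dots,d\). Grade \(\ZZ^{d+1}\) by the last coordinate. Lattice points of \(C\) at last coordinate \(k\) are exactly the points of \(k\Delta\cap\ZZ^d\). Hence
\[
\text{Ehr}_\Delta(t)=\sum_{\mb\in C\cap\ZZ^{d+1}} t^{m_{d+1}},
\]
and it suffices to identify this generating function with \(\bigl(\sum_{\xb\in\Lambda_\Delta} t^{\hei(\xb)}\bigr)/(1-t)^{d+1}\).

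\textbf{Step 1.} Introduce the half-open fundamental parallelepiped
\[
\Pi=\Bigl\{\sum_{i=0}^d x_i\wb_i : 0\le x_i<1\Bigr\}.
\]
The map \((x_0,\dots,x_d)\mapsto\sum_i x_i\wb_i\) gives a bijection between \(\Lambda_\Delta\) and \(\Pi\cap\ZZ^{d+1}\); this is just the definition of \(\Lambda_\Delta\). Under this bijection the last coordinate of \(\sum_i x_i\wb_i\) is \(\sum_i x_i=\hei(\xb)\), since each \(\wb_i\) has last coordinate \(1\). This also explains why heights are integers.

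\textbf{Step 2.} Every point \(\mb\in C\) has unique coordinates \(\mb=\sum_i y_i\wb_i\) with \(y_i\ge 0\), because the \(\wb_i\) form a basis. Writing \(y_i=\lfloor y_i\rfloor+\{y_i\}\) decomposes \(\mb\) uniquely as \(\pb+\sum_i n_i\wb_i\) with \(\pb\in\Pi\) and \(n_i\in\ZZ_{\ge 0}\). Since \(\sum_i n_i\wb_i\in\ZZ^{d+1}\), the point \(\mb\) is a lattice point if and only if \(\pb\) is. This yields a bijection
\[
C\cap\ZZ^{d+1}\;\longleftrightarrow\;(\Pi\cap\ZZ^{d+1})\times\ZZ_{\ge0}^{d+1}.
\]
Under it the last coordinate is additive: it equals \(\hei(\xb)+\sum_i n_i\).

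\textbf{Step 3.} Summing over this bijection gives
\[
\text{Ehr}_\Delta(t)=\Bigl(\sum_{\xb\in\Lambda_\Delta}t^{\hei(\xb)}\Bigr)\prod_{i=0}^d\frac{1}{1-t}.
\]
By uniqueness of the rational expression with denominator \((1-t)^{d+1}\), this gives \(h^*(\Delta,t)=\sum_{\xb\in\Lambda_\Delta}t^{\hei(\xb)}\). Comparing coefficients yields the lemma. Two small checks remain: \(\hei(\xb)\le d\) because every \(x_i<1\), and the constant term \(1\) of \(\text{Ehr}_\Delta\) is accounted for by the apex \(\mb=0\), which corresponds to \(\xb=0\), \(n=0\).

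The only point needing real care is Step 2: the uniqueness of the decomposition (which rests on linear independence of the \(\wb_i\)) and the observation that integrality passes between \(\mb\) and \(\pb\). Everything else is bookkeeping.
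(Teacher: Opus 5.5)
Your proof is correct: it is the standard fundamental-parallelepiped argument. Each lattice point of the cone over $\Delta\times\{1\}$ decomposes uniquely as a lattice point of the half-open parallelepiped spanned by the $(\vb_i,1)$ plus a nonnegative integer combination of these generators, and the last coordinate adds up as $\hei(\xb)+\sum_i n_i$. The paper gives no proof and cites the lemma as well known; the usual argument for it (e.g.\ in Beck and Robins) is exactly this one, including your checks that heights are integers and that $\hei(\xb)\le d$.
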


\begin{Lemma}[\cite{BatyrevHof}]\label{lem:pyramid-lambda}
Let \(\Delta\) be a lattice simplex of dimension \(d\).
Then \(\Delta\) is a lattice pyramid if and only if there exists an index \(i\in \{0,\dots,d\}\) such that for all $\xb=(x_0,\dots,x_d)\in \Lambda_\Delta$,
$x_i=0$.
\end{Lemma}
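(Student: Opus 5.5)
The plan is to prove both directions by passing between the group $\Lambda_\Delta$ and the lattice $\ZZ^{d+1}$ written in the (rational) basis $\wb_j:=(\vb_j,1)$, $j=0,\dots,d$. The key observation is that the map $\xb\mapsto\sum_j x_j\wb_j$ identifies $\Lambda_\Delta$ with a set of representatives of $\ZZ^{d+1}/L$, where $L=\bigoplus_j\ZZ\wb_j$. Consequently, a point $\yb=\sum_j y_j\wb_j\in\ZZ^{d+1}$ has fractional parts $(\{y_0\},\dots,\{y_d\})\in\Lambda_\Delta$. So the condition ``$x_i=0$ for all $\xb\in\Lambda_\Delta$'' is equivalent to the following statement: the $i$-th coordinate functional $\ell$ with respect to the basis $(\wb_j)$ takes integer values on all of $\ZZ^{d+1}$.

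\emph{Pyramid $\Rightarrow$ vanishing coordinate.} Being a lattice pyramid and being unimodularly equivalent to ${\rm Pyr}(\Pc)$ for some $\Pc$ are the same; an iterated pyramid is in particular a pyramid over its first-step base. Moreover, $\Lambda_\Delta$ is invariant under unimodular equivalence, because $f_U$ and translations by lattice points induce an automorphism of $\ZZ^{d+1}$ preserving the last coordinate. So we may assume that $\vb_d=(0,\dots,0,1)$ and that $\vb_0,\dots,\vb_{d-1}$ lie in $\{x_d=0\}$. For $\xb\in\Lambda_\Delta$, the $d$-th coordinate of $\sum_j x_j\wb_j$ is exactly $x_d$. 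Since this coordinate must be an integer and $x_d\in[0,1)$, we get $x_d=0$.

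\emph{Vanishing coordinate $\Rightarrow$ pyramid.} By the observation above, $\ell$ is an integral linear functional on $\ZZ^{d+1}$ with $\ell(\wb_j)=\delta_{ij}$. Write $\ell(\xb,t)=\langle\ab,\xb\rangle+ct$ with $\ab\in\ZZ^d$ and $c\in\ZZ$. Then the affine functional $\xb\mapsto\langle\ab,\xb\rangle+c$ vanishes at $\vb_j$ for $j\neq i$ and equals $1$ at $\vb_i$.
\begin{enumerate}
\item Translate by $-\vb_{j_0}$ for some $j_0\neq i$. After this, $\langle\ab,\vb_j\rangle=0$ for $j\neq i$ and $\langle\ab,\vb_i\rangle=1$.
\item Since $\langle\ab,\vb_i\rangle=1$, the vector $\ab$ is primitive, so it extends to a $\ZZ$-basis of the dual lattice. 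Hence there is a unimodular $U$ after which $\ab$ becomes the last coordinate functional. Now $\vb_j\in\{x_d=0\}$ for $j\ne i$, and $\vb_i=(\ub,1)$ with $\ub\in\ZZ^{d-1}$.
\item Apply the unimodular shear $(\yb,x_d)\mapsto(\yb-x_d\ub,x_d)$. It fixes the hyperplane $\{x_d=0\}$ pointwise and sends $\vb_i$ to $(0,\dots,0,1)$.
\end{enumerate}
The result is $\conv(\Pc\times\{0\},(0,\dots,0,1))$, where $\Pc$ is the image of the facet opposite $\vb_i$. This $\Pc$ is a $(d-1)$-dimensional lattice simplex in $\ZZ^{d-1}$, so $\Delta$ is a lattice pyramid.

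The main point needing care is the first identification: that $\Lambda_\Delta$ really represents all of $\ZZ^{d+1}/L$. This holds because every $\yb\in\ZZ^{d+1}$ differs from $\sum_j\{y_j\}\wb_j$ by the element $\sum_j\lfloor y_j\rfloor\wb_j\in L$. With that in hand, integrality of $\ell$ on $\ZZ^{d+1}$ follows directly from $x_i=0$ on $\Lambda_\Delta$. The remaining steps are standard unimodular normalizations.
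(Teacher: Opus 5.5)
Your proof is correct. The paper gives no argument for this lemma; it quotes it from \cite{BatyrevHof}. So there is nothing to compare line by line, and what you have written is a complete, self-contained proof of the cited fact.

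The identification of $\Lambda_\Delta$ with $\ZZ^{d+1}/\bigoplus_j \ZZ(\vb_j,1)$ is the key step. It turns vanishing of the $i$-th coordinate on $\Lambda_\Delta$ into integrality of the linear functional dual to $(\vb_i,1)$. Equivalently, the vertex $\vb_i$ has lattice distance $1$ from the affine hull of the opposite facet.

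Your three normalizations then correctly put $\Delta$ in the explicit form ${\rm Pyr}(\Pc)$: translating by a vertex $\vb_{j_0}$ with $j_0\neq i$, completing the primitive vector $\ab$ to a unimodular matrix, and applying the shear. The forward direction is also fine, since $\Lambda_\Delta$ is invariant under the unimodular map $(\xb,t)\mapsto(\xb U+t\wb,\,t)$ of $\ZZ^{d+1}$ up to reordering vertices.

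One small wording point: an iterated pyramid ${\rm Pyr}^k(\Qc)$ is a pyramid over ${\rm Pyr}^{k-1}(\Qc)$, the base of the \emph{last} step rather than the first. This does not affect the argument.
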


We also recall the correspondence between lattice simplices and finite abelian subgroups.

\begin{Proposition}[\cite{BatyrevHof}]\label{prop:delta-lambda-correspondence}
Let \(d\ge 1\).
Then there is a bijection between unimodular equivalence classes of \(d\)-dimensional lattice simplices with an ordered set of vertices and finite abelian subgroups \(G\subset [0,1)^{d+1}\) under addition modulo \(1\) such that
for all $(x_0,\dots,x_d)\in G$, 
$\sum_{i=0}^d x_i\in \ZZ.$
Under this correspondence, two lattice simplices are unimodularly equivalent if and only if their associated groups coincide up to a permutation of coordinates.
\end{Proposition}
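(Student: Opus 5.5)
The plan is to translate both sides of the correspondence into a single object: a lattice \(M\subset\RR^{d+1}\) with \(\ZZ^{d+1}\subseteq M\) of finite index, on which the functional \(h(\xb)=\sum_i x_i\) takes integer values. The finite abelian group is then just \(M/\ZZ^{d+1}\), realized by its representatives in \([0,1)^{d+1}\).

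First, from simplices to groups. Given \(\Delta=\conv(\vb_0,\dots,\vb_d)\), consider the linear isomorphism
\[
\varphi:\RR^{d+1}\to\RR^{d+1},\qquad \eb_i\mapsto(\vb_i,1).
\]
It is an isomorphism because the vertices are affinely independent. Put \(M_\Delta=\varphi^{-1}(\ZZ^{d+1})\). Then \(M_\Delta\) is a lattice containing \(\ZZ^{d+1}\), since each \((\vb_i,1)\) is integral. It has finite index, namely the normalized volume. Moreover \(h\) is integral on \(M_\Delta\), because \(h(\xb)\) is the last coordinate of \(\varphi(\xb)\). By definition \(\Lambda_\Delta\) is exactly the set of representatives of \(M_\Delta/\ZZ^{d+1}\) in \([0,1)^{d+1}\). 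Hence it is a finite group under addition modulo \(1\), and all its elements have integral height.

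Next, from groups to simplices. Given \(G\) as in the statement, let \(M=\ZZ^{d+1}+\sum_{g\in G}\ZZ g\). This is a lattice, since \(G\) is finite and closed under addition mod \(1\), so \(M/\ZZ^{d+1}\cong G\). The sum condition says \(h(M)\subseteq\ZZ\), and \(h(\eb_0)=1\), so \(h:M\to\ZZ\) is surjective. I then choose an isomorphism \(\psi:M\to\ZZ^{d+1}\) whose last coordinate equals \(h\). This is possible because the kernel of a surjection onto \(\ZZ\) is a direct summand: take a basis of \(\ker h\) together with a preimage of \(1\). Writing \(\psi(\eb_i)=(\vb_i,1)\) defines \(\Delta=\conv(\vb_0,\dots,\vb_d)\). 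This is a \(d\)-dimensional lattice simplex, since the \(\psi(\eb_i)\) form an \(\RR\)-basis. By construction \(\varphi=\psi\) on \(M\), so \(M_\Delta=M\) and \(\Lambda_\Delta=G\).

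The key step, and the only real obstacle, is showing that the two constructions are mutually inverse on unimodular equivalence classes.

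\begin{itemize}
\item \emph{Choices of \(\psi\).} Two choices \(\psi,\psi'\) differ by an automorphism of \(\ZZ^{d+1}\) preserving the last coordinate functional. Such automorphisms are exactly the maps \((\yb,t)\mapsto(\yb U+t\wb,\,t)\) with \(U\in\ZZ^{d\times d}\) unimodular and \(\wb\in\ZZ^d\). On the slice \(t=1\) these are precisely the unimodular affine maps \(\xb\mapsto\xb U+\wb\). So the simplex is well defined up to unimodular equivalence, with the vertex order preserved.
\item \emph{Equivalent simplices give the same group.} If \(\Delta'=f_U(\Delta)+\wb\) with the same ordering, then \(\varphi'\) is \(\varphi\) composed with such an automorphism. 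Hence \(M_{\Delta'}=M_\Delta\) and \(\Lambda_{\Delta'}=\Lambda_\Delta\).
\end{itemize}

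These two points give the bijection for ordered vertex sets. Finally, reordering the vertices of \(\Delta\) corresponds to permuting the coordinates of \(\RR^{d+1}\) in the definition of \(\varphi\), and hence to permuting the coordinates of \(\Lambda_\Delta\). This yields the last assertion: two simplices, without a specified vertex order, are unimodularly equivalent if and only if their groups agree up to a permutation of coordinates.
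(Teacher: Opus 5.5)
Your proof is correct. The paper gives no argument for this proposition: it imports it from \cite{BatyrevHof} and then uses it, together with Lemmas~\ref{lem:hstar-lambda} and \ref{lem:pyramid-lambda}, to move between simplices and groups.

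Your route passes through the intermediate lattice \(M_\Delta=\varphi^{-1}(\ZZ^{d+1})=\ZZ^{d+1}+\Lambda_\Delta\), on which the height functional \(h\) is integral. This is the standard argument behind that citation. It makes explicit two things the paper leaves implicit:
\begin{enumerate}
\item the identity \(|\Lambda_\Delta|=[M_\Delta:\ZZ^{d+1}]\), which equals the normalized volume;
\item a concrete recipe for passing from a group to a simplex. Take a \(\ZZ\)-basis of \(\ker(h|_M)\) together with one element of \(M\) of height \(1\). The vertices are then the coordinate vectors of \(\eb_0,\dots,\eb_d\) in this basis, with the height coordinate dropped.
\end{enumerate}
The recipe is what one needs in practice to produce or check the representative simplices in Tables~\ref{tab:deg3} and \ref{tab:deg4}, where the paper only says that Proposition~\ref{prop:delta-lambda-correspondence} supplies them.

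Two points are only implicit in your write-up and deserve a sentence each.
\begin{enumerate}
\item For the composite simplex \(\to\) group \(\to\) simplex, state that \(\varphi_\Delta|_{M_\Delta}\colon M_\Delta\to\ZZ^{d+1}\) is itself an admissible choice of \(\psi\). Its last coordinate is \(h\), and it is a group isomorphism. Your independence-of-choice argument then returns the class of \(\Delta\), and this is also what gives injectivity on classes of ordered simplices.
\item For the unordered statement, note that a unimodular affine map \(\xb\mapsto \xb U+\wb\) carrying \(\Delta\) onto \(\Delta'\) sends vertices to vertices. It therefore induces the reordering, which becomes the coordinate permutation of the groups.
\end{enumerate}
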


\subsection{Binary linear codes}

Let \(n\ge 1\).
A \textit{binary linear code} of length \(n\) is a linear subspace \(C\subset \FF_2^n\).
For
\[
\cb=(c_1,\dots,c_n)\in \FF_2^n,
\]
the \textit{Hamming weight} of \(\cb\) is defined by
\[
\wt(\cb):=|\{i:c_i\neq 0\}|.
\]
The code \(C\) is called \textit{even} if every codeword of \(C\) has even weight.

In this paper, a binary linear code \(C\subset \FF_2^n\) is called \textit{self-complementary} if it contains the all-one vector
\[
\mathbf{1}_n:=(1,\dots,1)\in \FF_2^n.
\]

For a binary linear code \(C\subset \FF_2^{2s}\), we will consider the subset
\[
\frac12 C:=\left\{\frac12\cb\in [0,1)^{2s}:\cb\in C\right\}.
\]
If \(C\) is even, then every element of \(\frac12 C\) has integral height, and hence \(\frac12 C\) satisfies the integrality condition in Proposition~\ref{prop:delta-lambda-correspondence}.

\section{Gorenstein simplices of dimension \(2s-1\) and degree \(s\)}
\label{sec:main}
In this section, we prove Theorem\ref{thm:intro-main}.
First, we show the following.

\begin{Lemma}\label{lem:lambda-one-implies-pyramid}
Let
$
\Delta=\conv(\vb_0,\dots,\vb_d)\subset \RR^d
$
be a Gorenstein simplex of dimension $d$ and codegree \(r\), and let
$
\pb=\sum_{i=0}^d \lambda_i \vb_i
$
be the unique interior lattice point of \(r\Delta\), where
$
\lambda_i>0
$ for all $0 \leq i \leq d$ and $
\sum_{i=0}^d \lambda_i=r$.
Then one has $0 < \lambda_i \leq 1$ for all $i$. Moreover,
if \(\lambda_j=1\) for some \(j\), then
\(\Delta\) is a lattice pyramid.
\end{Lemma}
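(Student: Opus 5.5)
The plan is to work inside the associated group \(\Lambda_\Delta\) and use the unique interior point \(\pb\) of \(r\Delta\) to produce a distinguished group element. Since \(\pb\) is a lattice point and \(\sum_i \lambda_i(\vb_i,1)=(\pb,r)\in\ZZ^{d+1}\), the fractional parts \(\{\lambda_i\}\) give an element \(\xb_\pb:=(\{\lambda_0\},\dots,\{\lambda_d\})\in\Lambda_\Delta\).

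\emph{Bound \(\lambda_i\le 1\).} Suppose some \(\lambda_j>1\). Then \(\pb-(\vb_j)\) has barycentric-type coordinates \(\lambda_i\) for \(i\neq j\) and \(\lambda_j-1>0\), with total \(r-1\). Equivalently,
\[
(\pb,r)-(\vb_j,1)=\sum_{i\ne j}\lambda_i(\vb_i,1)+(\lambda_j-1)(\vb_j,1)
\]
is a lattice point lying in the interior of the cone over \(\Delta\) at height \(r-1\). So \((r-1)\Delta\) has an interior lattice point, contradicting \(\codeg(\Delta)=r\). Here \(r-1\ge 1\) holds because \(\sum\lambda_i=r\) and \(\lambda_j>1\); the case \(r-1=0\) is impossible, since height \(0\) contains only the origin, which is not interior. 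Hence \(0<\lambda_i\le 1\) for all \(i\).

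\emph{Pyramid case.} Assume \(\lambda_j=1\). To show \(\Delta\) is a lattice pyramid, I will use Lemma~\ref{lem:pyramid-lambda}: it suffices to show \(x_j=0\) for every \(\xb\in\Lambda_\Delta\). Take \(\xb\in\Lambda_\Delta\) with \(x_j>0\), and consider the lattice point \((\pb,r)+\sum_i x_i(\vb_i,1)\), whose coefficients are \(\lambda_i+x_i\). Using the Gorenstein property, the lattice points in the interior of the cone are exactly \((\pb,r)\) plus lattice points of the closed cone. This is the standard consequence of \(r\Delta-\pb\) being reflexive: every facet of the cone is at lattice distance \(1\) from \((\pb,r)\). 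I will use this characterization in the reverse direction. The point
\[
\sum_{i\ne j}(\lambda_i+x_i)(\vb_i,1)+(x_j)(\vb_j,1)+\text{(adjust)}
\]
is obtained from \((\pb,r)+\xb\) by subtracting \((\vb_j,1)\). It is a lattice point with coefficients \(\lambda_i+x_i>0\) for \(i\ne j\) and \(1+x_j-1=x_j>0\), hence it lies in the interior of the cone. By the Gorenstein characterization it therefore equals \((\pb,r)+\yb\) with \(\yb\) in the closed cone. Its \(j\)-th coefficient is \(x_j<1=\lambda_j\), which would force a negative \(j\)-th coordinate of \(\yb\), a contradiction. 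Thus \(x_j=0\) for all \(\xb\), and Lemma~\ref{lem:pyramid-lambda} gives that \(\Delta\) is a lattice pyramid.

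The main obstacle is justifying the Gorenstein characterization of interior cone points, \(\mathrm{int}(C)\cap\ZZ^{d+1}=(\pb,r)+C\cap\ZZ^{d+1}\). I would derive it from reflexivity: each facet inequality \(\ell_F\) of the cone is primitive integral with \(\ell_F(\pb,r)=1\). Hence an interior lattice point \(\mathbf{q}\) has \(\ell_F(\mathbf{q})\ge 1\), so \(\ell_F(\mathbf{q}-(\pb,r))\ge 0\) for every facet \(F\).
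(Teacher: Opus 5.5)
Your proposal is correct and follows essentially the same route as the paper. The bound comes from subtracting \(\vb_j\) from \(\pb\) to get an interior lattice point of \((r-1)\Delta\), and the pyramid statement comes from applying the Gorenstein cone identity \(\operatorname{int}(\mathcal C_\Delta)\cap\ZZ^{d+1}=(\pb,r)+(\mathcal C_\Delta\cap\ZZ^{d+1})\) to the interior lattice point \((\pb,r)-(\vb_j,1)+\sum_i x_i(\vb_i,1)\), then reading off the negative \(j\)-th coefficient \(x_j-1\). (Delete the stray ``(adjust)'' term in your display; the element \(\xb_\pb\) you introduce at the start is never used, while your sketch of the cone identity via primitive facet functionals with \(\ell_F(\pb,r)=1\) adds a justification the paper only cites.)
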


\begin{proof}
Assume that $\lambda_j >1$ for some $j$.
It then follows from $\sum_{i=0}^d \lambda_i=r$, $\lambda_i > 0$ for all $i \neq j$ and $\lambda_j-1>0$ that
\[\vb:=\pb-\vb_j=\sum_{i \neq j} \lambda_i \vb_i+(\lambda_j-1)\vb_j\]
belongs to ${\rm int}((r-1)\Delta) \cap \ZZ^{d}$.
This implies that $r=\codeg(\Delta) \leq r-1$, a contradiction.
Hence one has $0< \lambda_i \leq 1$ for all $i$.

For each \(0\le i\le d\), set
\[
\ab_i:=(\vb_i,1)\in \ZZ^{d+1},
\]
and consider the cone
\[
\mathcal C_\Delta:=\RR_{\ge 0}\ab_0+\cdots+\RR_{\ge 0}\ab_d.
\]
Since \(\Delta\) is a simplex, the vectors \(\ab_0,\dots,\ab_d\) are linearly independent.
Let
\[
\cb:=(\pb,r)=\sum_{i=0}^d \lambda_i \ab_i.
\]
Since \(\Delta\) is Gorenstein of codegree \(r\), the cone \(\mathcal C_\Delta\) is Gorenstein with canonical lattice point \(\cb\). Namely, one has
\[
\operatorname{int}(\mathcal C_\Delta)\cap \ZZ^{d+1}
=\cb+\bigl(\mathcal C_\Delta\cap \ZZ^{d+1}\bigr).
\]

Assume that \(\lambda_j=1\) for some $j$.
Then
\[\ub:=\cb-\ab_j=\sum_{i\ne j}\lambda_i \ab_i
\]
belongs to \(\mathcal C_\Delta\cap \ZZ^{d+1}\).

Suppose, to the contrary, that there exists
\[
\yb=(y_0,\dots,y_d)\in \Lambda_\Delta
\]
with \(y_j>0\) for some $j$.
Set
\[
\mb_{\yb}:=\sum_{i=0}^d y_i \ab_i.
\]
Since \(\yb\in\Lambda_\Delta\), one has
$
\mb_{\yb}\in \ZZ^{d+1}.
$
Moreover, \(0\le y_i<1\) for all \(i\).
Set
\[
\ub+\mb_{\yb}
=
\sum_{i\ne j}(\lambda_i+y_i)\ab_i+y_j \ab_j.
\]
    Since \(\lambda_i + y_i \geq \lambda_i>0\) for all $i \neq j$ and \(y_j>0\), one has 
\[
\ub+\mb_{\yb}\in \operatorname{int}(\mathcal C_\Delta)\cap \ZZ^{d+1}.
\]
By the Gorenstein property of \(\mathcal C_\Delta\), there exists
\[
\mb'\in \mathcal C_\Delta\cap \ZZ^{d+1}
\]
such that
\[
\ub+\mb_{\yb}=\cb+\mb'.
\]
Since \(\ub=\cb-\ab_j\), this implies
\[
\mb_{\yb}-\ab_j=\mb'.
\]
Therefore
$\mb_{\yb}-\ab_j\in \mathcal C_\Delta$.

On the other hand,
\[
\mb_{\yb}-\ab_j
=
\sum_{i\ne j} y_i \ab_i + (y_j-1)\ab_j.
\]
Since \(0<y_j<1\), $y_j -1 <0$.
Because \(\ab_0,\dots,\ab_d\) are linearly independent, an element of \(\mathcal C_\Delta\) has a unique expression as a nonnegative linear combination of \(\ab_0,\dots,\ab_d\).
Thus \(\mb_{\yb}-\ab_j\notin \mathcal C_\Delta\), a contradiction.

Hence \(y_j=0\) for every \(\yb\in\Lambda_\Delta\).
Therefore, \(\Delta\) is a lattice pyramid.
\end{proof}

\begin{Lemma}
\label{lem:full-support-top}
Let $\Delta=\conv(\vb_0,\ldots,\vb_{d})$ be a Gorenstein simplex of dimension $d$ and degree $s$ which is not a lattice pyramid.
Let
\[
\xb=(x_0,\dots,x_{d})\in\Lambda_\Delta
\]
be the unique element of $\Lambda_\Delta$ with
$
\hei(\xb)=s$.
Then one has $x_i>0$ for all $i$.
\end{Lemma}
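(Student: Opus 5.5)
The plan is to identify the unique height-$s$ element $\xb$ of $\Lambda_\Delta$ with the interior lattice point of $r\Delta$, where $r=\codeg(\Delta)=d+1-s$. Then Lemma~\ref{lem:lambda-one-implies-pyramid} forces every coordinate of $\xb$ to be strictly positive.

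First, a remark on uniqueness. By Lemma~\ref{lem:hstar-lambda}, $h_s^*$ equals the number of elements of height $s$. Palindromicity gives $h_s^*=h_0^*=1$, so $\xb$ is indeed unique.

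Next, let $\pb=\sum_i\lambda_i\vb_i$ be the unique interior lattice point of $r\Delta$. Here $\lambda_i>0$ and $\sum_i\lambda_i=r$. By Lemma~\ref{lem:lambda-one-implies-pyramid} we have $0<\lambda_i\le 1$. Since $\Delta$ is not a lattice pyramid, the same lemma gives $\lambda_i<1$, so $0<\lambda_i<1$ for all $i$. The point $(\pb,r)=\sum_i\lambda_i(\vb_i,1)$ is integral, so $\boldsymbol\lambda:=(\lambda_0,\dots,\lambda_d)\in[0,1)^{d+1}$ lies in $\Lambda_\Delta$, with height $r$.

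Because all coordinates of $\boldsymbol\lambda$ lie in $(0,1)$, its inverse is $-\boldsymbol\lambda=(1-\lambda_0,\dots,1-\lambda_d)$. This element has height $d+1-r=s$ and all coordinates strictly positive. By uniqueness, $\xb=-\boldsymbol\lambda$, and therefore $x_i=1-\lambda_i>0$ for every $i$.

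The only delicate point is the strictness $\lambda_i<1$. Without it, $1-\lambda_i$ could vanish, or the formula for the inverse could fail. This is exactly what the second part of Lemma~\ref{lem:lambda-one-implies-pyramid} supplies under the non-pyramid hypothesis.
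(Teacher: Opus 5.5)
Your proof is correct and follows essentially the same route as the paper: you use Lemma~\ref{lem:lambda-one-implies-pyramid} together with the non-pyramid hypothesis to get $0<\lambda_i<1$, place $(\lambda_0,\dots,\lambda_d)$ in $\Lambda_\Delta$ at height $r$, and identify its inverse $(1-\lambda_0,\dots,1-\lambda_d)$, which has height $s$ and strictly positive coordinates, with $\xb$ by uniqueness. Your extra remark justifying uniqueness via $h_s^*=h_0^*=1$ is a harmless addition that the paper takes for granted in the statement.
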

\begin{proof}
Set $r:=\codeg(\Delta)=d+1-s$ and let 
$
\pb=\sum_{i=0}^d \lambda_i \vb_i
$
be the unique interior lattice point of \(r\Delta\), where
$
\lambda_i>0
$ for all $0 \leq i \leq d$ and $
\sum_{i=0}^d \lambda_i=r$.
Since $\Delta$ is not a lattice pyramid,
it then follows from Lemma \ref{lem:lambda-one-implies-pyramid} that $0 < \lambda_i < 1$ for all $i$. Since $(\pb,r)=\sum_{i=0}^d \lambda_i(\vb_i,1)\in \ZZ^{d+1}$ and $0<\lambda_i<1$ for all $i$, it follows that
\[
\lambda:=(\lambda_0,\ldots,\lambda_d)\in \Lambda_\Delta
\]
with $\hei(\lambda)=r$.
Set $\yb=(y_0,\ldots,y_d):=-\lambda \in \Lambda_{\Delta}$. Then $0 <y_i <1$ for all $i$. Since $\hei(\yb)=d+1-\hei(\lambda)=d+1-r=s$, one has $\xb=\yb$. Therefore, we obtain $x_i > 0$ for all $i$.
\end{proof}

The assumption of Gorensteinness for this lemma is essential.
\begin{Example}
    Consider the finite subgroup
\[
\Lambda=\langle \xb,\yb\rangle \subset [0,1)^6
\]
generated by
\[
\xb=\left(\frac34,\frac34,\frac12,\frac12,\frac12,0\right),
\qquad
\yb=\left(0,\frac12,0,\frac12,\frac12,\frac12\right).
\]
Then \(|\Lambda|=8\), and the elements of \(\Lambda\) are
\[
\begin{aligned}
&(0,0,0,0,0,0),
\left(\frac12,\frac12,0,0,0,0\right),
\left(0,\frac12,0,\frac12,\frac12,\frac12\right),
\left(\frac14,\frac14,\frac12,\frac12,\frac12,0\right),\\
&\left(\frac14,\frac34,\frac12,0,0,\frac12\right),
\left(\frac12,0,0,\frac12,\frac12,\frac12\right),
\left(\frac34,\frac14,\frac12,0,0,\frac12\right),
\left(\frac34,\frac34,\frac12,\frac12,\frac12,0\right).
\end{aligned}
\]
Hence, by Proposition~\ref{prop:delta-lambda-correspondence}, there exists a lattice simplex \(\Delta\) such that \(\Lambda_\Delta=\Lambda\). Since no coordinate is identically zero on \(\Lambda\), Lemma~\ref{lem:pyramid-lambda} implies that \(\Delta\) is not a lattice pyramid.
Moreover, one has
\[
h^*(\Delta,t)=1+t+5t^2+t^3.
\]
In particular, the maximal height is \(3\), and the unique element of height \(3\) is
\[
\left(\frac34,\frac34,\frac12,\frac12,\frac12,0\right).
\]
Therefore, the assumption that \(\Delta\) is Gorenstein is essential in Lemma~\ref{lem:full-support-top}.
\end{Example}

Now, we prove Theorem \ref{thm:intro-main}.
\begin{proof}[Proof of Theorem\ref{thm:intro-main}]
For each \(0\le k\le s\), let
\[
H_k:=\{\yb\in\Lambda_\Delta:\hei(\yb)=k\}.
\]
Since \(\Delta\) is Gorenstein of degree \(s\), the \(h^*\)-polynomial of \(\Delta\) is palindromic of degree \(s\). Hence for any $0 \leq k \leq s$, one has
$|H_k|=|H_{s-k}|$.
Moreover, since the coefficient of \(t^s\) in \(h^*(\Delta,t)\) is equal to \(1\), there exists a unique element
$\xb\in\Lambda_\Delta$
such that
$\hei(\xb)=s$.
Since $\Delta$ is not a lattice pyramid, by Lemma~\ref{lem:full-support-top}, one has
$0<x_i<1$ for all $i$.
Therefore,
$\hei(-\xb)=2s-\hei(\xb)=s$.
Since $\xb$ is the unique element of height $s$, we obtain
$-\xb=\xb$.
Thus $2\xb=\mathbf{0}$.
Since $0<x_i<1$ for all $i$, it follows that
$x_i=\frac{1}{2}$ for all $i$.
Hence
\[
\xb=\left(\frac12,\dots,\frac12\right).
\]

Next, we claim that for any $0 \leq k \leq s$, 
\[
\xb+H_k=H_{s-k}.
\]
We prove this by induction on \(k\).
For \(k=0\), since \(H_0=\{\mathbf{0}\}\) and \(H_s=\{\xb\}\), one has
\[
\xb+H_0=H_s.
\]
Now let \(1\le k\le s\), and assume that for any $0 \leq j < k$,
\[
\xb+H_j=H_{s-j}.
\]
Take
\[
\yb=(y_0,\dots,y_{2s-1})\in H_k.
\]
Set
\[
m(\yb):=\left|\{\,i: y_i\ge 1/2\,\}\right|.
\]
It then follows from \(\hei(\yb)=k\) that 
$m(\yb)\le 2k$.
On the other hand, since
\[
\xb=\left(\frac12,\dots,\frac12\right),
\]
one has
\[
\hei(\xb+\yb)=\hei(\xb)+\hei(\yb)-m(\yb)=s+k-m(\yb).
\]
Therefore
\[
\hei(\xb+\yb)\ge s-k.
\]
Assume that
\[
\hei(\xb+\yb)=s-j
\]
for some \(j<k\).
Then
\[
\xb+\yb\in H_{s-j}=\xb+H_j
\]
by the induction hypothesis.
However, since 
\[
\xb+(\xb+H_j)=2\xb+H_j=H_j,
\]
one has
\[\yb=2\xb+\yb=\xb+(\xb+\yb) \in \xb+(\xb+H_j) = H_j,\]
which contradicts \(\hei(\yb)=k\).
Thus
since \(\hei(\xb+\yb)\ge s-k\), we obtain
\[
\hei(\xb+\yb)=s-k.
\]
Hence
\[
\xb+H_k\subseteq H_{s-k}.
\]
It then follows from \( |\xb+H_k|=|H_k|=|H_{s-k}|\) that
\[
\xb+H_k=H_{s-k}.
\]

Now let
\[
\yb=(y_0,\dots,y_{2s-1})\in H_k.
\]
Since \(\xb+\yb\in H_{s-k}\), we have
\[
s-k=\hei(\xb+\yb)=s+k-m(\yb),
\]
and hence
\[
m(\yb)=2k.
\]
It then follows from $\hei(\yb)=k$
that all of nonzero coordinates must be equal to \(1/2\) and the remaining coordinates must be equal to \(0\).
Therefore every element of \(\Lambda_\Delta\) has each coordinate equal to either \(0\) or \(1/2\).

Define
\[
C:=\{\,2\yb : \yb\in\Lambda_\Delta\,\}\subset \mathbb F_2^{2s}.
\]
Since \(\Lambda_\Delta\) is a finite abelian group under addition modulo \(1\), the set \(C\) is a binary linear code.
Moreover, if \(\yb\in H_k\), then \(\yb\) has exactly \(2k\) coordinates equal to \(1/2\), so the corresponding codeword \(2\yb\) has Hamming weight \(2k\).
Hence every codeword of \(C\) has even weight, that is, \(C\) is an even binary linear code.
Since
\[
\xb=\left(\frac12,\dots,\frac12\right)\in\Lambda_\Delta,
\]
the code \(C\) contains $\mathbf{1}_{2s}$. Hence $C$ is self-complementary.
On the other hand, by construction,
\[
\Lambda_\Delta
=
\left\{
\frac12\cb\in [0,1)^{2s}:\cb\in C
\right\}.
\]

Conversely, let \(C\subset \mathbb F_2^{2s}\) be an even binary self-complementary code, and define
\[
\Lambda:=
\left\{
\frac12\cb\in [0,1)^{2s}:\cb\in C
\right\}.
\]
Then \(\Lambda\) is a finite subgroup of \([0,1)^{2s}\).
Moreover, since $C$ is even, one has $\hei(\xb) \in \ZZ$ for each $\xb \in \Lambda$. Hence
there exists a lattice simplex \(\Delta\) of dimension $2s-1$ such that
\[
\Lambda_\Delta=\Lambda.
\]
Since  \(\mathbf{1}_{2s} \in C\), the group \(\Lambda_\Delta\) contains
\[
\left(\frac12,\dots,\frac12\right),
\]
so \(\Delta\) is not a lattice pyramid.
Moreover, one has
\[
h^*(\Delta,t)=\sum_{\cb\in C} t^{\wt(\cb)/2}.
\]
Since \(C\) contains $\mathbf{1}_{2s}$, its degree is exactly \(s\). Also, adding $\mathbf{1}_{2s}$ gives a bijection between codewords of weight \(2k\) and codewords of weight \(2s-2k\), so \(h^*(\Delta,t)\) is palindromic. Therefore \(\Delta\) is Gorenstein of degree \(s\).
This completes the proof.
\end{proof}

From the proof of Theorem \ref{thm:intro-main} we obtain the following corollary.
\begin{Corollary}\label{cor:weight}
Let \(\Delta\) be a \((2s-1)\)-dimensional Gorenstein simplex of degree \(s\) which is not a lattice pyramid.
Let \(C \subset \mathbb{F}_2^{2s}\) be an even binary self-complementary code such that the associated finite abelian group of \(\Delta\) is of the form
\[
\Lambda_\Delta=
\left\{
\frac{1}{2}\cb \in [0,1)^{2s} : \cb \in C
\right\}.
\]
Then one has
\[
h^*(\Delta,t)=\sum_{\cb\in C} t^{\wt(\cb)/2}.
\]
\end{Corollary}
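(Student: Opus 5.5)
The plan is to combine Lemma~\ref{lem:hstar-lambda} with the explicit description of \(\Lambda_\Delta\) that the proof of Theorem~\ref{thm:intro-main} produces. By Lemma~\ref{lem:hstar-lambda}, the coefficient \(h_i^*\) equals the number of elements of \(\Lambda_\Delta\) of height \(i\). Hence
\[
h^*(\Delta,t)=\sum_{\yb\in\Lambda_\Delta} t^{\hei(\yb)}.
\]
It therefore suffices to express the height of each element of \(\Lambda_\Delta\) through the code \(C\).

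First, by hypothesis the map \(\cb\mapsto \frac12\cb\) is a bijection from \(C\) onto \(\Lambda_\Delta\). Distinct binary vectors give distinct points of \([0,1)^{2s}\), since each coordinate of \(\frac12\cb\) is \(0\) or \(\frac12\) according as the corresponding coordinate of \(\cb\) is \(0\) or \(1\). We may thus reindex the sum over \(C\).

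Second, we compute the height directly. The element \(\frac12\cb\) has exactly \(\wt(\cb)\) coordinates equal to \(\frac12\) and all remaining coordinates equal to \(0\). So \(\hei(\frac12\cb)=\wt(\cb)/2\), which is an integer because \(C\) is even. Substituting into the displayed sum gives \(h^*(\Delta,t)=\sum_{\cb\in C}t^{\wt(\cb)/2}\).

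There is no real obstacle here. The only point to check is that the code \(C\) in the statement may be any code realizing \(\Lambda_\Delta\), not necessarily the one built in the proof. The computation above uses only the stated form of \(\Lambda_\Delta\), so it holds for any such \(C\). This agrees with the formula already noted in the converse part of the proof of Theorem~\ref{thm:intro-main}.
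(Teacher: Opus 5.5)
Your proposal is correct and follows essentially the same route as the paper, which obtains the corollary from the proof of Theorem~\ref{thm:intro-main}. There, too, the formula comes from Lemma~\ref{lem:hstar-lambda} together with the observations that \(\cb\mapsto\frac12\cb\) is a bijection \(C\to\Lambda_\Delta\) and that \(\hei(\frac12\cb)=\wt(\cb)/2\).
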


\section{Classification of Gorenstein simplices of degree \(3\) and $4$}\label{sec:degree3}

In this section, we classify Gorenstein simplices of degree \(3\) and $4$ in the extremal case.
We first observe a general fact: the associated finite abelian group is determined by the lower half of the height decomposition.
Let $\Delta$ be a lattice simplex of dimension $d$, and 
for each \(0\le i\le s\), let
\[
H_i:=\{\yb\in \Lambda_\Delta:\hei(\yb)=i\}.
\]
Then from the proof of Theorem \ref{thm:intro-main} we obtain the following.
\begin{Proposition}\label{prop:H_i-determine}
Let \(\Delta\) be a \((2s-1)\)-dimensional Gorenstein simplex of degree \(s\) which is not a lattice pyramid.
Then \(\Lambda_\Delta\) is completely determined by
\[
H_1,\dots,H_{\lfloor s/2\rfloor}.
\]
\end{Proposition}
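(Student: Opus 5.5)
The plan is to read off from the proof of Theorem~\ref{thm:intro-main} that $\Lambda_\Delta$ is the disjoint union $H_0\cup H_1\cup\cdots\cup H_s$, and then to recover each $H_k$ with $k>\lfloor s/2\rfloor$ from the lower half.

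First, $H_0=\{\mathbf{0}\}$ is always known. The element $\xb=(\frac12,\dots,\frac12)$ is forced: it is the unique element of height $s$, so $H_s=\{\xb\}$ is also determined. Both facts were established in that proof without any reference to the $H_i$.

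Next, I would invoke the identity $\xb+H_k=H_{s-k}$ for all $0\le k\le s$, proved there by induction. Let $k$ satisfy $\lfloor s/2\rfloor<k\le s$. Then $0\le s-k<s-\lfloor s/2\rfloor=\lceil s/2\rceil$, so $s-k\le\lfloor s/2\rfloor$. Hence
\[
H_k=\xb+H_{s-k}
\]
with $s-k\in\{0,1,\dots,\lfloor s/2\rfloor\}$. Since $2\xb=\mathbf{0}$, this identity is symmetric, so no information is lost. Together with $H_0=\{\mathbf 0\}$, every $H_k$ is expressed through $H_1,\dots,H_{\lfloor s/2\rfloor}$ and the fixed vector $\xb$. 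Therefore
\[
\Lambda_\Delta=\bigcup_{k=0}^{\lfloor s/2\rfloor}\bigl(H_k\cup(\xb+H_k)\bigr).
\]

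A small check is needed when $s$ is even and $k=s/2$. In that case the identity reads $\xb+H_{s/2}=H_{s/2}$, which is consistent and introduces nothing new.

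The only subtle point is that $\xb$ does not depend on the data. It is canonically the all-$\frac12$ vector, which is forced by the fact that $\Delta$ is Gorenstein and not a pyramid. Beyond that, the argument is bookkeeping: the complement map $\yb\mapsto\xb+\yb$ is a bijection from $H_k$ onto $H_{s-k}$.
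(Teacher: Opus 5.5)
Your proposal is correct and follows essentially the same argument as the paper. Like the paper, you combine $H_0=\{\mathbf{0}\}$ with the identity $\xb+H_k=H_{s-k}$ from the proof of Theorem~\ref{thm:intro-main}, where $\xb=(\frac12,\dots,\frac12)$ is forced independently of the data, to recover every $H_k$ with $k>\lfloor s/2\rfloor$ from the lower half.
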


\begin{proof}
This follows since for each \(0\le i\le s\) one has
\[
\xb+H_i=H_{s-i},
\]
where \[
\xb=\left(\frac12,\dots,\frac12\right).
\]
and $H_0=\{\mathbf{0}\}$.
\end{proof}

It is useful to describe $H_1$ by using a graph.
For each \(1\le i<j\le 2s\), let
\[
\varepsilon_{ij}\in [0,1)^{2s}
\]
be the element whose \(i\)-th and \(j\)-th coordinates are equal to \(1/2\), and whose other coordinates are \(0\).
Then every element of \(H_1\) is of the form \(\varepsilon_{ij}\).
Hence \(H_1\) can be regarded as the edge set of a graph on \(\{1,\ldots,2s\}\).
Define a graph \(G_\Delta\) on the vertex set \(\{1,\dots,2s\}\) by
\[
\{i,j\}\in E(G_\Delta)
\quad\Longleftrightarrow\quad
\varepsilon_{ij}\in H_1,
\]
where $E(G_{\Delta})$ is the edge set of $G_{\Delta}$.
We first record two closure properties of \(G_\Delta\).

\begin{Lemma}\label{lem:graph-closure}
The graph \(G_\Delta\) satisfies the following properties:
\begin{enumerate}
\item
if \(\{i,j\},\{i,k\}\in E(G_\Delta)\) with \(i,j,k\) pairwise distinct, then \(\{j,k\}\in E(G_\Delta)\);

\item
if \(\{i_1,j_1\},\ldots,\{i_{s-1},j_{s-1}\}\) are disjoint edges, then
\[
\{p,q\}=\{1,\ldots,2s\}\setminus \{i_1,\ldots,i_{s-1},j_1,\ldots,j_{s-1}\}
\]
is an edge of \(G_\Delta\).
In particular, if \(G_\Delta\) has an \((s-1)\)-matching, then it has an \(s\)-matching.
\end{enumerate}
\end{Lemma}

\begin{proof}
(1) Since \(\varepsilon_{ij},\varepsilon_{ik}\in H_1\subset \Lambda_\Delta\), one has
\[
\varepsilon_{ij}+\varepsilon_{ik}=\varepsilon_{jk}\in \Lambda_\Delta.
\]
Moreover,
\[
\hei(\varepsilon_{jk})=1,
\]
so \(\varepsilon_{jk}\in H_1\).

(2) Since \(\varepsilon_{i_1j_1},\ldots,\varepsilon_{i_{s-1}j_{s-1}}\in H_1\subset \Lambda_\Delta\), one has
\[
\xb+\varepsilon_{i_1j_1}+\cdots+\varepsilon_{i_{s-1}j_{s-1}}=\varepsilon_{pq}\in \Lambda_\Delta,
\]
where
\[
\{p,q\}=\{1,\ldots,2s\}\setminus \{i_1,\ldots,i_{s-1},j_1,\ldots,j_{s-1}\}.
\]
Since \(\hei(\varepsilon_{pq})=1\), one has \(\varepsilon_{pq}\in H_1\).
\end{proof}
\subsection{Classification of the $5$-dimensional Gorenstein simplices of degree $3$}
From now on, let \(\Delta\) be a \(5\)-dimensional Gorenstein simplex of degree \(3\) which is not a lattice pyramid.
Then \(\Lambda_\Delta\) is completely determined by \(H_1\).

\begin{Lemma}\label{lem:graph-classification}
The graph \(G_\Delta\) is a disjoint union of complete graphs.
Moreover, up to graph isomorphism, \(G_\Delta\) is one of the following:
\[
\emptyset,
K_2,
K_3,
3K_2,
K_4\sqcup K_2,
K_6,
\]
where $K_n$ is a complete graph of $n$ vertices.
\end{Lemma}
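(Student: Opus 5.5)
Here $s=3$, so $G_\Delta$ is a graph on $\{1,\dots,6\}$. The plan is to use Lemma~\ref{lem:graph-closure}(1) to get the structure and then Lemma~\ref{lem:graph-closure}(2) to cut down the list.

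\emph{Step 1: structure.} Lemma~\ref{lem:graph-closure}(1) says that two edges sharing a vertex force the third edge of the triangle. So adjacency together with equality is transitive, i.e.\ it is an equivalence relation on the non-isolated vertices. Hence each connected component is a complete graph, and $G_\Delta$ is a disjoint union of complete graphs $K_{n_1}\sqcup\cdots\sqcup K_{n_t}$ (plus isolated vertices), with $n_i\ge 2$ and $\sum n_i\le 6$.

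\emph{Step 2: the matching condition.} For $s=3$, Lemma~\ref{lem:graph-closure}(2) says: whenever $\{i,j\},\{k,l\}$ are disjoint edges, the complementary pair $\{p,q\}$ is also an edge. Call a graph \emph{closed} if it has this property. I will go through the partitions of the non-isolated part and check closedness.
\begin{itemize}
\item If $G_\Delta$ has no two disjoint edges, the condition is vacuous. A disjoint union of cliques with no $2$-matching is $\emptyset$, $K_2$, or $K_3$; these are allowed.
\item Otherwise $G_\Delta$ has two disjoint edges, so it must contain a perfect matching. Then every vertex is covered, the clique sizes are all at least $2$, and they sum to $6$. The candidates are $K_6$, $K_4\sqcup K_2$, $K_3\sqcup K_3$, $K_2\sqcup K_2\sqcup K_2$, $K_2\sqcup K_4$ (same as before), and $K_5$ plus an isolated vertex. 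I also have to exclude the cases where a perfect matching does not cover every vertex.
\end{itemize}
Now exclude the bad cases:
\begin{itemize}
\item $K_3\sqcup K_3$: it has only one edge per triangle in any matching. Two disjoint edges from the two triangles leave one vertex in each triangle, and these two vertices are not adjacent. So it fails.
\item $K_5\sqcup K_1$: two disjoint edges inside $K_5$ leave the isolated vertex plus one more, which are not adjacent. So it fails.
\item $K_4\sqcup K_2$ with the $K_2$ missing, i.e.\ $K_4$ plus two isolated vertices: two disjoint edges of the $K_4$ leave the two isolated vertices, which are not adjacent. So it fails.
\item $K_3\sqcup K_2$ plus an isolated vertex: one edge of the $K_3$ and the $K_2$ leave the third triangle vertex and the isolated vertex, which are not adjacent. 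So it fails.
\item $2K_2$ plus two isolated vertices: same argument, so it fails.
\end{itemize}
Finally, check that $3K_2$, $K_4\sqcup K_2$, and $K_6$ are closed. For $3K_2$ and $K_6$ this is clear. For $K_4\sqcup K_2$, two disjoint edges are either both in the $K_4$ (and leave the $K_2$ edge) or one in the $K_4$ and the $K_2$ (and leave the remaining $K_4$ edge). So they all survive.

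The main obstacle is making this enumeration exhaustive. Ordering the cases by the partition of the non-isolated vertex count (sizes $0$ through $6$) makes sure no case is missed. Realizability of the six graphs is not required by the statement, which only asserts that $G_\Delta$ is one of them.
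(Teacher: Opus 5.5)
Your proof is correct and follows the paper's argument. Lemma~\ref{lem:graph-closure}(1) reduces $G_\Delta$ to disjoint unions of cliques indexed by partitions of $6$. Lemma~\ref{lem:graph-closure}(2) then rules out exactly the five graphs $2K_2$, $K_3\sqcup K_2$, $2K_3$, $K_4$, $K_5$; these are the ones with a $2$-matching but no perfect matching, which the paper phrases as ``matching number exactly $2$''.

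Your second bullet is garbled. Once a perfect matching exists there are no isolated vertices, so $K_5\sqcup K_1$ should not be listed among the candidates, and the phrase about a perfect matching ``not covering every vertex'' contradicts itself. Nothing is actually missing, though: your explicit checks cover all eight partitions that admit a $2$-matching. The final verification that the three survivors are closed is not needed, since the statement only asserts necessity.
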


\begin{proof}
By Lemma~\ref{lem:graph-closure} (1), every connected component of \(G_{\Delta}\) is a complete graph.
Hence \(G_{\Delta}\) is a disjoint union of complete graphs.
Thus, up to graph isomorphism, \(G_{\Delta}\) is determined by a partition of \(6\).
Therefore, there are $11$ possibilities.
We now exclude the impossible cases.

By Lemma~\ref{lem:graph-closure} (2), if \(G_\Delta\) contains a \(2\)-matching, then it also contains a \(3\)-matching.
Hence any graph whose maximum matching number is exactly \(2\) cannot occur.
Therefore
\[
2K_2,\ K_2\sqcup K_3,\ 2K_3,\ K_4,\ K_5
\]
cannot occur.
This implies that the only remaining possibilities, up to graph isomorphism, are
\[
\emptyset,\ K_2,\ K_3,\ 3K_2,\ K_4\sqcup K_2,\ K_6.
\]
\end{proof}

We now obtain the classification theorem.

\begin{Theorem}\label{thm:degree3-classification}
Let \(\Delta\) be a \(5\)-dimensional Gorenstein simplex of degree \(3\) which is not a lattice pyramid.
Then, up to unimodular equivalence, \(\Delta\) is one of the six simplices listed in Table~\ref{tab:deg3}, where \(\mathbf{0}\) denotes the origin of \(\RR^5\) and \(\eb_1,\ldots,\eb_5\) denote the standard unit coordinate vectors.
\end{Theorem}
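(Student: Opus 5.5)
We combine Theorem~\ref{thm:intro-main}, Proposition~\ref{prop:H_i-determine} and Lemma~\ref{lem:graph-classification}. By Theorem~\ref{thm:intro-main}, $\Lambda_\Delta=\frac12 C$ for an even self-complementary code $C\subset\FF_2^6$. Since $s=3$, Proposition~\ref{prop:H_i-determine} shows that $\Lambda_\Delta$ is determined by $H_1$, that is, by the graph $G_\Delta$. Explicitly,
\[
\Lambda_\Delta=\{\mathbf 0\}\cup H_1\cup(\xb+H_1)\cup\{\xb\},\qquad \xb=\left(\tfrac12,\dots,\tfrac12\right),
\]
so $C=\{0,\mathbf 1_6\}\cup\{e_i+e_j,\ \mathbf 1_6+e_i+e_j : \{i,j\}\in E(G_\Delta)\}$. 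By Lemma~\ref{lem:graph-classification}, $G_\Delta$ is one of $\emptyset, K_2, K_3, 3K_2, K_4\sqcup K_2, K_6$ up to isomorphism. A permutation of the coordinates of $\{1,\dots,6\}$ maps $G_\Delta$ to an isomorphic graph and correspondingly permutes the coordinates of $\Lambda_\Delta$. By Proposition~\ref{prop:delta-lambda-correspondence}, unimodular equivalence classes therefore correspond injectively to isomorphism classes of these graphs, giving at most six classes. Distinct graphs yield non-equivalent simplices because $G_\Delta$ is recovered from $\Lambda_\Delta$ up to coordinate permutation; in fact $|H_1|=0,1,3,3,7,15$ and $3K_2\not\cong K_3$ already separates all six.

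Next I will check that each of the six graphs actually occurs. By the converse part of Theorem~\ref{thm:intro-main}, it suffices that the set $C$ above is closed under addition. This holds iff the set of pairs $\{i,j\}$ together with the complements of $4$-sets is closed under symmetric difference. For a disjoint union of cliques this requires two conditions. First, transitivity within each clique, which holds for complete graphs. Second, for two disjoint edges, the complementary pair must be an edge. The second condition holds vacuously for $\emptyset, K_2, K_3$; it holds for $3K_2$ because any two of the edges leave the third; for $K_4\sqcup K_2$, two disjoint edges must lie in $K_4$ and leave the $K_2$; and it holds trivially for $K_6$. The resulting codes have dimensions $1,2,3,3,4,5$, with $h^*$-polynomials $1+|H_1|t+|H_1|t^2+t^3$ by Corollary~\ref{cor:weight}.

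Finally, I will match each group to an explicit simplex in Table~\ref{tab:deg3}. For each $\Lambda=\frac12 C$, I will take a generator matrix of $C$ and write down vertices $\vb_0=\mathbf 0,\vb_i$ realizing it. A convenient normal form is $\vb_i=\eb_i$ for $i$ outside some index set, with the remaining vertices chosen so that the relations $\sum \frac12 c_i(\vb_i,1)\in\ZZ^6$ hold. I will verify each simplex by computing its normalized volume, which must be $|C|=2,4,8,8,16,32$, and checking that the generators of $\frac12C$ give integral points. I expect this explicit realization and verification to be the main obstacle, since it is computational. The structural part of the argument follows directly from the earlier results.
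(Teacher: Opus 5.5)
Your proposal is correct and follows the paper's route: reduce to $G_\Delta$ via Lemma~\ref{lem:graph-classification}, note that $\Lambda_\Delta$ is determined by $H_1$, and realize each case by an explicit simplex through Proposition~\ref{prop:delta-lambda-correspondence}; you also make explicit the distinctness and realizability of the six cases, which the paper leaves implicit. One small slip: in the $K_4\sqcup K_2$ check, an edge of $K_4$ and the $K_2$ edge are also disjoint, so your claim that two disjoint edges must both lie in $K_4$ is false, but the closure condition still holds because their complement is the remaining edge of $K_4$.
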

\begin{table}[h]
\centering
\caption{The \(5\)-dimensional Gorenstein simplices of degree \(3\) which are not lattice pyramids}
\label{tab:deg3}
\begin{tabular}{c|c|c|l}
type & graph \(G_\Delta\) & \(h^*(\Delta,t)\) &  vertices \\ \hline
\(\Delta^{(3)}_1\)
& \(\emptyset\)
& \(1+t^3\)
& \(\mathbf{0},\eb_1,\eb_2,\eb_3,\eb_4,\eb_1+\eb_2+\eb_3+\eb_4+2\eb_5\) \\ \hline

\(\Delta^{(3)}_2\)
& \(K_2\)
& \(1+t+t^2+t^3\)
& \(\mathbf{0},\eb_1,\eb_2,\eb_3,2\eb_4,\eb_1+\eb_2+\eb_3+2\eb_5\) \\ \hline

\(\Delta^{(3)}_3\)
& \(K_3\)
& \(1+3t+3t^2+t^3\)
& \(\mathbf{0},\eb_1,\eb_2,2\eb_3,2\eb_4,\eb_1+\eb_2+2\eb_5\) \\ \hline

\(\Delta^{(3)}_{4}\)
& \(3K_2\)
& \(1+3t+3t^2+t^3\)
& \(\mathbf{0},\eb_1,\eb_2,2\eb_3,\eb_2+2\eb_4,\eb_1+2\eb_5\) \\ \hline

\(\Delta^{(3)}_5\)
& \(K_4\sqcup K_2\)
& \(1+7t+7t^2+t^3\)
& \(\mathbf{0},\eb_1,2\eb_2,2\eb_3,2\eb_4,\eb_1+2\eb_5\) \\ \hline

\(\Delta^{(3)}_6\)
& \(K_6\)
& \(1+15t+15t^2+t^3\)
& \(\mathbf{0},2\eb_1,2\eb_2,2\eb_3,2\eb_4,2\eb_5\)
\end{tabular}
\end{table}

\begin{proof}
By Lemma~\ref{lem:graph-classification}, the graph \(G_\Delta\) is isomorphic to one of the following six graphs:
\[
\emptyset,\ K_2,\ K_3,\ 3K_2,\ K_4\sqcup K_2,\ K_6.
\]
For each of these graphs, the set \(H_1\) is determined, and hence so is the associated finite abelian group \(\Lambda_\Delta\), since \(\Lambda_\Delta\) is completely determined by \(H_1\).
Using Proposition~\ref{prop:delta-lambda-correspondence}, we obtain a representative simplex for each case, listed in Table~\ref{tab:deg3}.
The corresponding \(h^*\)-polynomials are computed from Corollary~\ref{cor:weight} by counting codewords of each weight, or equivalently from Lemma~\ref{lem:hstar-lambda}.
This gives exactly the six simplices listed in the table.
\end{proof}

\subsection{Classification of the $7$-dimensional Gorenstein simplices of degree $4$}\label{sec:degree4}

In this section, we consider \(7\)-dimensional Gorenstein simplices of degree \(4\) which are not lattice pyramids.
By Proposition~\ref{prop:H_i-determine}, the group \(\Lambda_\Delta\) is completely determined by \(H_1\) and \(H_2\).

First, we classify $G_{\Delta}$.
\begin{Proposition}\label{prop:H1-partition}
The graph \(G_\Delta\) is a disjoint union of complete graphs.
Moreover, up to graph isomorphism, \(G_\Delta\) is one of the following:
\[
\emptyset,\ 
K_2,\ 
2K_2,\ 
4K_2,\ 
K_3,\ 
K_3\sqcup K_2,\ 
2K_3,\ 
K_4,\ 
K_4 \sqcup 2K_2,\ 
2K_4,\ 
K_5,\ 
K_6\sqcup K_2,\ 
K_8.
\]
\end{Proposition}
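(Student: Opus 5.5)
The plan is to reduce the statement to a counting problem on partitions of \(8\), in the same way as in the proof of Lemma~\ref{lem:graph-classification}, using only Lemma~\ref{lem:graph-closure} with \(s=4\). The proposition only asserts that \(G_\Delta\) belongs to the list, so only necessity needs to be shown here.

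\emph{Step 1: the components are cliques.} By Lemma~\ref{lem:graph-closure}~(1), any two neighbours of a vertex are adjacent, so every connected component of \(G_\Delta\) is a complete graph. Thus, up to isomorphism, \(G_\Delta\) is determined by a partition of \(8\), namely the sizes of its components. Components of size \(1\) are isolated vertices and are omitted from the notation. For a disjoint union of cliques \(K_{n_1}\sqcup\cdots\sqcup K_{n_t}\), the matching number is \(\sum_i \lfloor n_i/2\rfloor\).

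\emph{Step 2: the dichotomy from Lemma~\ref{lem:graph-closure}~(2).} With \(s=4\), part~(2) says the following. For every \(3\)-matching of \(G_\Delta\), the two uncovered vertices are adjacent. In particular, a \(3\)-matching forces a perfect matching, so the matching number is either at most \(2\) or equal to \(4\).

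\emph{Step 3: matching number \(4\).} A disjoint union of cliques on \(8\) vertices has a perfect matching only if every component has even size, with the isolated vertices counted as components of size \(1\). So every vertex lies in an even clique, and the partition consists of even parts summing to \(8\). This gives exactly
\[
K_8,\ K_6\sqcup K_2,\ 2K_4,\ K_4\sqcup 2K_2,\ 4K_2.
\]
For completeness I would also check that these five graphs satisfy the condition of Step~2. Each edge of a matching lies inside a single clique. Removing three such edges therefore leaves every clique with an even number of uncovered vertices. Since exactly two vertices remain uncovered in total, they lie in the same clique and hence are adjacent.

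\emph{Step 4: matching number at most \(2\).} Here I list the partitions of \(8\) whose parts of size at least \(2\) satisfy \(\sum\lfloor n_i/2\rfloor\le 2\).
\begin{itemize}
\item Matching number \(0\): the empty graph \(\emptyset\).
\item Matching number \(1\): a single clique of size \(2\) or \(3\), giving \(K_2\) and \(K_3\).
\item Matching number \(2\) with a single nontrivial clique: that clique has size \(4\) or \(5\), giving \(K_4\) and \(K_5\).
\item Matching number \(2\) with two nontrivial cliques: each has size \(2\) or \(3\), giving \(2K_2\), \(K_3\sqcup K_2\) and \(2K_3\).
\end{itemize}
All of these fit into \(8\) vertices. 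Together with Step~3 this yields precisely the thirteen graphs in the statement.

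The only delicate point is Step~3: turning the existence of a perfect matching into the condition that all components have even size, taking care that isolated vertices count as odd components. The remaining steps are routine enumeration.
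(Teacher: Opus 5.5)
Your proof is correct and takes essentially the same route as the paper. You use Lemma~\ref{lem:graph-closure}~(1) to reduce to partitions of $8$ and Lemma~\ref{lem:graph-closure}~(2) to rule out matching number exactly $3$. The only difference is bookkeeping: the paper discards the nine partitions with matching number $3$ from the $22$, while you directly list those with all parts even or with matching number at most $2$, and both yield the same thirteen graphs.
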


\begin{proof}
By Lemma~\ref{lem:graph-closure} (1), every connected component of \(G_\Delta\) is a complete graph.
Hence \(G_\Delta\) is a disjoint union of complete graphs.
Thus, up to graph isomorphism, \(G_\Delta\) is determined by a partition of \(8\).
Therefore, there are $22$ possibilities.
We now exclude the impossible cases.

By Lemma~\ref{lem:graph-closure} (2), if \(G_\Delta\) contains a \(3\)-matching, then it also contains a \(4\)-matching.
Hence any graph whose maximum matching number is exactly \(3\) cannot occur.
Therefore
\[
3K_2,\ K_3\sqcup 2K_2,\ 2K_3\sqcup K_2,\ K_4\sqcup K_2,\ K_4\sqcup K_3,\ K_5\sqcup K_2,\ K_5\sqcup K_3,\ K_6,\ K_7
\]
cannot occur.

This implies that the only remaining possibilities, up to graph isomorphism, are
\[
\emptyset,\ 
K_2,\ 
2K_2,\ 
4K_2,\ 
K_3,\ 
K_3\sqcup K_2,\ 
2K_3,\ 
K_4,\ 
K_4 \sqcup 2K_2,\ 
2K_4,\ 
K_5,\ 
K_6\sqcup K_2,\ 
K_8.
\]
\end{proof}

Next, we consider \(H_2\).
For each \(A\subset \{1,\dots,8\}\) with \(|A|=4\), let \(\eta_A\in [0,1)^8\) be the element whose \(i\)-th coordinate is \(1/2\) if \(i\in A\), and \(0\) otherwise. Then every element of \(H_2\) is of the form \(\eta_A\). Hence \(H_2\) can be regarded as a \(4\)-uniform hypergraph on the vertex set \(\{1,\dots,8\}\).
For subsets \(A,B\subset \{1,\dots,8\}\), let
\[
A\triangle B:=(A\setminus B)\cup (B\setminus A)
\]
denote their symmetric difference.
The following lemma follows directly from the group structure of \(\Lambda_\Delta\) and the description of its elements in terms of supports.
\begin{Lemma}\label{lem:H1H2-hypergraph-closure}
The pair \((H_1,H_2)\) satisfies the following properties.
\begin{enumerate}
\item If \(\eta_A\in H_2\), then \(\eta_{A^c}\in H_2\), where \(A^c:=\{1,\dots,8\}\setminus A\).
\item If \(\varepsilon_{ij},\varepsilon_{k\ell}\in H_1\) and \(\{i,j\}\cap \{k,\ell\}=\emptyset\), then \(\eta_{\{i,j,k,\ell\}}\in H_2\).
\item Let \(A\subset \{1,\dots,8\}\) with \(|A|=4\), and assume that \(\eta_A\in H_2\).
\begin{enumerate}
\item If \(\varepsilon_{ij}\in H_1\) and \(|A\cap \{i,j\}|=1\), then \(\eta_{A\triangle \{i,j\}}\in H_2\).
\item If \(\varepsilon_{ij}\in H_1\) and \(\{i,j\}\subset A\), then \(\varepsilon_{A\setminus \{i,j\}}\in H_1\).
\item If \(\varepsilon_{ij}\in H_1\) and \(A\cap \{i,j\}=\emptyset\), then \(\varepsilon_{\{1,\dots,8\}\setminus (A\cup \{i,j\})}\in H_1\).
\end{enumerate}
\item Let \(A,B\subset \{1,\dots,8\}\) with \(|A|=|B|=4\), and assume that \(\eta_A,\eta_B\in H_2\).
\begin{enumerate}
\item If \(|A\cap B|=3\), then \(\varepsilon_{A\triangle B}\in H_1\).
\item If \(|A\cap B|=2\), then \(\eta_{A\triangle B}\in H_2\).
\item If \(|A\cap B|=1\), then \(\varepsilon_{\{1,\dots,8\}\setminus (A\triangle B)}\in H_1\).
\end{enumerate}
\end{enumerate}
\end{Lemma}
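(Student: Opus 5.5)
The plan is to argue entirely inside \(\Lambda_\Delta\), using the structure obtained in the proof of Theorem~\ref{thm:intro-main}. Here \(s=4\), and every element of \(\Lambda_\Delta\) is \(\frac12\cb\) for a codeword \(\cb\) of the even code \(C\subset\FF_2^8\). An element of height \(k\) therefore has support of size exactly \(2k\), with all nonzero coordinates equal to \(1/2\). Addition in \(\Lambda_\Delta\) corresponds to taking the symmetric difference of supports, and the height of an element is half the size of its support. Also \(\xb=(\frac12,\dots,\frac12)\in\Lambda_\Delta\), and \(\xb\) has support \(\{1,\dots,8\}\).

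Each item then reduces to the same two steps. First, form the relevant sum of elements already known to lie in \(\Lambda_\Delta\); closure under addition places the sum in \(\Lambda_\Delta\). Second, compute the size of the support of the sum, which gives its height and hence the set \(H_1\) or \(H_2\) containing it.

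\begin{enumerate}
\item For (1), \(\xb+\eta_A\) has support \(A^c\) of size \(4\), so \(\eta_{A^c}\in H_2\).
\item For (2), \(\varepsilon_{ij}+\varepsilon_{k\ell}\) has support \(\{i,j,k,\ell\}\) of size \(4\), so \(\eta_{\{i,j,k,\ell\}}\in H_2\).
\item For (3), consider \(\eta_A+\varepsilon_{ij}\), whose support is \(A\triangle\{i,j\}\).
\begin{enumerate}
\item If \(|A\cap\{i,j\}|=1\), this support has size \(4\), giving an element of \(H_2\).
\item If \(\{i,j\}\subset A\), the support has size \(2\), giving \(\varepsilon_{A\setminus\{i,j\}}\in H_1\).
\item If \(A\cap\{i,j\}=\emptyset\), the support has size \(6\). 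Adding \(\xb\) replaces the support by its complement \(\{1,\dots,8\}\setminus(A\cup\{i,j\})\), which has size \(2\) and so gives an element of \(H_1\).
\end{enumerate}
\item For (4), consider \(\eta_A+\eta_B\), whose support \(A\triangle B\) has size \(8-2|A\cap B|\).
\begin{enumerate}
\item If \(|A\cap B|=3\), the size is \(2\), giving \(\varepsilon_{A\triangle B}\in H_1\).
\item If \(|A\cap B|=2\), the size is \(4\), giving \(\eta_{A\triangle B}\in H_2\).
\item If \(|A\cap B|=1\), the size is \(6\). Adding \(\xb\) gives the complement of \(A\triangle B\), of size \(2\), hence an element of \(H_1\).
\end{enumerate}
\end{enumerate}

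There is no real obstacle in this argument. The only point needing care is the identification of addition modulo \(1\) on \(\{0,\frac12\}\)-vectors with symmetric difference of supports. Everything else is bookkeeping of set sizes.
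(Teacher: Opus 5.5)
Your proposal is correct and takes the same approach as the paper. The paper only remarks that the lemma follows directly from the group structure of \(\Lambda_\Delta\) and the description of its elements as \(\{0,\frac12\}\)-vectors. Your argument is exactly that verification: addition corresponds to symmetric difference of supports, height equals half the support size, and adding \(\xb=(\frac12,\dots,\frac12)\) replaces a support by its complement.
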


For each graph \(G_\Delta\) listed in Proposition~\ref{prop:H1-partition}, we determine all possible sets \(H_2\), and hence the associated finite abelian group \(\Lambda_\Delta\).

\begin{Proposition}[Case \(G_\Delta\cong \emptyset\)]\label{prop:case-H1-empty}
Assume that \(H_1=\emptyset\). Then, up to relabeling of the coordinates, \(H_2\) is one of the following four sets:
\begin{enumerate}
\item \(H_2=\emptyset\);
\item \(H_2=\{\eta_{\{1,2,3,4\}},\eta_{\{5,6,7,8\}}\}\);
\item \(H_2=\{\eta_{\{1,2,3,4\}},\eta_{\{1,2,5,6\}},\eta_{\{3,4,5,6\}},\eta_{\{1,2,7,8\}},\eta_{\{3,4,7,8\}},\eta_{\{5,6,7,8\}}\}\);
\item
\(H_2=\{\eta_A:A\in \mathcal B_0\}\), where
\[
\begin{aligned}
\mathcal B_0:=\{&
\{1,2,3,4\},\{1,2,5,6\},\{1,2,7,8\},\{1,3,5,7\},\{1,3,6,8\},\{1,4,5,8\},\{1,4,6,7\},\\
&
\{5,6,7,8\},\{3,4,7,8\},\{3,4,5,6\},\{2,4,6,8\},\{2,4,5,7\},\{2,3,6,7\},\{2,3,5,8\}
\}.
\end{aligned}
\]
\end{enumerate}
\end{Proposition}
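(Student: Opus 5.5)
Since \(H_1=\emptyset\), the code \(C=\{2\yb:\yb\in\Lambda_\Delta\}\subset\FF_2^8\) from Theorem~\ref{thm:intro-main} has no codewords of weight \(2\). By Proposition~\ref{prop:H_i-determine} and \(\xb+H_3=H_1=\emptyset\), it has none of weight \(6\) either. So \(C\) consists of \(\mathbf 0\), \(\mathbf 1_8\), and the weight-\(4\) words \(\{2\eta_A:\eta_A\in H_2\}\). The plan is to set \(\mathcal A:=\{A:\eta_A\in H_2\}\) and classify \(\mathcal A\) via Lemma~\ref{lem:H1H2-hypergraph-closure}. With \(H_1=\emptyset\), the lemma gives three facts:
\begin{itemize}
\item \(\mathcal A\) is closed under complement;
\item no two members meet in \(3\) or \(1\) points, since (4)(a) and (4)(c) would produce an element of \(H_1\);
\item whenever \(|A\cap B|=2\), also \(A\triangle B\in\mathcal A\).
\end{itemize}
Thus any two distinct members of \(\mathcal A\) meet in \(0\) or \(2\) points, and \(\mathcal A\cup\{\emptyset,[8]\}\) is closed under symmetric difference. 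So \(C\) is a doubly-even-like code whose nonzero, non-all-one words all have weight exactly \(4\). Since \(C\) is a linear space containing \(\mathbf 1_8\), we get \(|C|=2^k\) and \(|\mathcal A|=2^k-2\). Moreover \(k\le 4\): the weight enumerator \(1+(2^k-2)y^4+y^8\), with every word of weight \(\equiv 0\pmod 4\), forces \(C\subseteq C^\perp\), since two words of weight \(4\) meeting in \(0\) or \(2\) points are orthogonal, so \(\dim C\le 4\).

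The argument then splits by \(k=1,2,3,4\), giving \(|\mathcal A|=0,2,6,14\).

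For \(k=1\), \(H_2=\emptyset\).

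For \(k=2\), \(\mathcal A=\{A,A^c\}\), and relabeling gives case (2).

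For \(k=3\), take \(A\) and some \(B\notin\{A,A^c\}\). Then \(|A\cap B|=2\), and after relabeling \(A=\{1,2,3,4\}\) and \(B=\{1,2,5,6\}\). The group they generate together with \(\mathbf 1_8\) yields exactly the six sets in case (3).

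For \(k=4\), \(C\) is a self-dual doubly-even \([8,4,4]\) code. The main step is uniqueness: I will show it is the extended Hamming code, i.e.\ \(\mathcal B_0\) up to permutation. Start from the \(k=3\) subcode \(C_3\) containing \(\{1,2,3,4\}\) and \(\{1,2,5,6\}\), which is forced up to relabeling as above. Adjoin a further set \(D\notin\mathcal A_3\). It must meet each of \(\{1,2,3,4\}\), \(\{1,2,5,6\}\), \(\{1,2,7,8\}\) and their complements in exactly \(2\) points. This forces \(D\) to contain exactly one element from each pair \(\{1,2\},\{3,4\},\{5,6\},\{7,8\}\); for instance, meeting \(\{1,2,3,4\}\) in two points and being incompatible with containing a whole pair pins this down. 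The stabilizer of the pair partition, namely the swaps within pairs together with permutations of the pairs, then lets us assume \(D=\{1,3,5,7\}\). Closure under symmetric difference produces precisely \(\mathcal B_0\). The case check \(|D\cap\{i,i'\}|=1\) for every pair is the step needing care: if \(D\supseteq\{1,2\}\), then intersecting with \(\{1,2,3,4\}\) and \(\{1,2,5,6\}\) forces \(D=\{1,2,7,8\}\in\mathcal A_3\).

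Finally, I will note that the four cases are pairwise non-isomorphic since \(|H_2|\) differs, and that each occurs by the converse part of Theorem~\ref{thm:intro-main}.
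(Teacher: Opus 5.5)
Your proof is correct and follows the same normalization sequence as the paper: first $A=\{1,2,3,4\}$, then $B=\{1,2,5,6\}$, then $D=\{1,3,5,7\}$, each time closing under complements and symmetric differences via Lemma~\ref{lem:H1H2-hypergraph-closure}. The main difference is how you finish. You organize the cases by $k=\dim C$ and use $|\mathcal A|=2^k-2$ together with the self-orthogonality bound $\dim C\le 4$, so that the span of $\mathbf 1_8,A,B,D$ is already all of $C$; the paper instead does a direct check that any further $E$ meeting $A,B,D$ in $0$ or $2$ points lies among the fourteen sets. You also spell out why $D$ meets each of $\{1,2\},\{3,4\},\{5,6\},\{7,8\}$ in exactly one point, which the paper leaves implicit in ``after relabeling preserving $A$ and $B$.''
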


\begin{proof}
Since \(H_1=\emptyset\), Lemma~\ref{lem:H1H2-hypergraph-closure} (4a) and (4c) imply that if \(\eta_A,\eta_B\in H_2\), then \(|A\cap B|\neq 1,3\). Thus, for distinct \(A,B\) with \(\eta_A,\eta_B\in H_2\), one has \(|A\cap B|=0\) or \(2\). If \(H_2=\emptyset\), we are in case (1).

Assume that \(H_2\neq \emptyset\), and choose \(A\subset \{1,\dots,8\}\) with \(\eta_A\in H_2\). Up to relabeling, we may assume that \(A=\{1,2,3,4\}\). Then \(\eta_{A^c}\in H_2\), where \(A^c=\{5,6,7,8\}\). If these are the only two elements of \(H_2\), then we are in case (2).

Now assume that there exists \(B\neq A,A^c\) with \(\eta_B\in H_2\). Since \(|A\cap B|=2\), after relabeling preserving \(A\), we may assume that \(B=\{1,2,5,6\}\). By Lemma~\ref{lem:H1H2-hypergraph-closure} (4b), one has \(\eta_{\{3,4,5,6\}}\in H_2\). Taking complements, we also obtain \(\eta_{\{1,2,7,8\}},\eta_{\{3,4,7,8\}}\in H_2\). Thus \(H_2\) contains at least the six elements listed in case (3). If these are all the elements of \(H_2\), then we are in case (3).

Finally, assume that there exists another \(D\subset \{1,\dots,8\}\) with \(\eta_D\in H_2\), different from the above six sets. Since \(|A\cap D|=2\) and \(|B\cap D|=2\), after relabeling preserving \(A\) and \(B\), we may assume that \(D=\{1,3,5,7\}\). Applying Lemma~\ref{lem:H1H2-hypergraph-closure} (4b) repeatedly, we obtain \(\eta_{\{2,4,5,7\}},\eta_{\{2,3,6,7\}},\eta_{\{1,4,6,7\}}\in H_2\). Taking complements, we also obtain \(\eta_{\{1,3,6,8\}},\eta_{\{1,4,5,8\}},\eta_{\{2,4,6,8\}},\eta_{\{2,3,5,8\}}\in H_2\). Hence \(H_2\) contains all fourteen sets listed in case (4).

Let \(E\subset \{1,\dots,8\}\) with \(\eta_E\in H_2\). Since \(|E\cap A|,|E\cap B|,|E\cap D|\in \{0,2\}\), a direct check shows that \(E\) is one of the fourteen sets in case (4). Therefore we are in case (4).
\end{proof}

\begin{Proposition}[Case \(G_\Delta\cong K_2\)]\label{prop:case-H1-K2}
Assume that \(H_1=\{\varepsilon_{12}\}\). Then, up to relabeling of the coordinates \(3,4,5,6,7,8\), \(H_2\) is one of the following two possibilities:
\begin{enumerate}
\item \(H_2=\emptyset\);
\item \(H_2=\{\eta_{\{1,3,4,5\}},\eta_{\{2,3,4,5\}},\eta_{\{1,6,7,8\}},\eta_{\{2,6,7,8\}}\}\).
\end{enumerate}
\end{Proposition}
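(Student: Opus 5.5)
We run the same case analysis as in Proposition~\ref{prop:case-H1-empty}, now using the edge \(\{1,2\}\) as a constraint through Lemma~\ref{lem:H1H2-hypergraph-closure}. Since \(H_1=\{\varepsilon_{12}\}\), any rule in that lemma that would produce a weight-\(2\) element other than \(\varepsilon_{12}\) gives a contradiction.

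First take \(\eta_A\in H_2\). If \(\{1,2\}\subset A\), then (3b) gives \(\varepsilon_{A\setminus\{1,2\}}\in H_1\), and this is an edge different from \(\{1,2\}\), which is impossible. If \(A\cap\{1,2\}=\emptyset\), then (3c) gives the edge \(\{1,\dots,8\}\setminus(A\cup\{1,2\})\), again impossible. Hence every \(A\) with \(\eta_A\in H_2\) satisfies \(|A\cap\{1,2\}|=1\).

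For two members \(A\neq B\), rules (4a) and (4c) force \(|A\cap B|\in\{0,2\}\), unless the resulting edge is exactly \(\{1,2\}\). Rule (4a) gives \(\varepsilon_{12}\) exactly when \(A\triangle B=\{1,2\}\). Rule (4c) would need \(A\cup B=\{3,\dots,8\}\), which cannot happen because both sets meet \(\{1,2\}\). So the allowed cases are \(|A\cap B|=3\) with \(A\triangle B=\{1,2\}\), and \(|A\cap B|\in\{0,2\}\).

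If \(H_2=\emptyset\), we are in case (1). Otherwise relabel \(3,\dots,8\) so that \(\eta_{\{1,3,4,5\}}\in H_2\). Rule (3a) with \(\{i,j\}=\{1,2\}\) gives \(\eta_{\{2,3,4,5\}}\in H_2\). Taking complements by (1) gives \(\eta_{\{2,6,7,8\}}\) and \(\eta_{\{1,6,7,8\}}\). These are the four sets of case (2).

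It remains to show there is nothing else. Suppose \(D\) is a further member. Replacing \(D\) by \(D\triangle\{1,2\}\) if necessary, we may assume \(1\in D\). Write \(D=\{1\}\cup T\) with \(T\subset\{3,\dots,8\}\), \(|T|=3\), and \(T\neq\{3,4,5\},\{6,7,8\}\). Then \(|D\cap\{1,3,4,5\}|=1+|T\cap\{3,4,5\}|\). Since \(T\) is neither triple, this value is \(2\) or \(3\).

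The value \(3\) is forbidden, because \(D\triangle\{1,3,4,5\}\) lies inside \(\{3,\dots,8\}\) and so is not \(\{1,2\}\). Therefore \(|T\cap\{3,4,5\}|=1\). By (4b), \(\eta_{D\triangle\{1,3,4,5\}}\in H_2\). This set avoids both \(1\) and \(2\), which contradicts the first step. Hence \(H_2\) is exactly the four sets of case (2).

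The step that needs the most care is the pairwise-intersection analysis: identifying when the edge produced by (4a) or (4c) is legitimately \(\varepsilon_{12}\) rather than a contradiction. The final contradiction rests on (4b) producing a set disjoint from \(\{1,2\}\).
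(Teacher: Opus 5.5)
Your argument is correct and is essentially the paper's proof. The paper likewise forces $|A\cap\{1,2\}|=1$ via (3b) and (3c), and uses (3a) and complements to reduce $H_2$ to the family of triples $B\subset\{3,\dots,8\}$ with $\eta_{\{1\}\cup B}\in H_2$. It then shows that two distinct such triples must be complementary, since $|B\cap C|=2$ is excluded by (4a) and $|B\cap C|=1$ by (4b). Your final step is exactly this argument with $B=\{3,4,5\}$.

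You should, however, delete the paragraph on pairwise intersections, because it is false. Rule (4c) produces $\varepsilon_{\{1,\dots,8\}\setminus(A\triangle B)}$. This equals $\varepsilon_{12}$ precisely when $A\triangle B=\{3,\dots,8\}$, not when $A\cup B=\{3,\dots,8\}$, and that case does occur. For example, $\{1,3,4,5\}$ and $\{1,6,7,8\}$ both belong to your case (2) and meet in a single point. So your list of ``allowed cases'' wrongly rules out $|A\cap B|=1$. Conversely, $|A\cap B|=2$, which the list permits, never occurs, by (4b) together with your first step.

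Nothing afterwards relies on that summary: your last step invokes (4a) and (4b) directly. Removing the paragraph therefore leaves a complete proof.

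A minor point: when you pick the first member of $H_2$, you may only relabel $3,\dots,8$. Say explicitly that if that member contains $2$ rather than $1$, you replace it by its image under (3a) or by its complement, which contains $1$.
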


\begin{proof}
Assume that \(H_2\neq \emptyset\), and let \(\eta_A\in H_2\). We first show that \(|A\cap \{1,2\}|=1\). Indeed, if \(\{1,2\}\subset A\), then Lemma~\ref{lem:H1H2-hypergraph-closure} (3b) yields \(\varepsilon_{A\setminus \{1,2\}}\in H_1\), which is impossible since \(A\setminus \{1,2\}\subset \{3,\dots,8\}\). If \(A\cap \{1,2\}=\emptyset\), then Lemma~\ref{lem:H1H2-hypergraph-closure} (3c) yields \(\varepsilon_{\{1,\dots,8\}\setminus (A\cup \{1,2\})}\in H_1\), again impossible since its support is contained in \(\{3,\dots,8\}\). Thus every \(A\) with \(\eta_A\in H_2\) satisfies \(|A\cap \{1,2\}|=1\).

Now let \(\mathcal T:=\{\,B\subset \{3,4,5,6,7,8\}: |B|=3,\ \eta_{\{1\}\cup B}\in H_2\,\}\). Then \(H_2\) is completely determined by \(\mathcal T\), since Lemma~\ref{lem:H1H2-hypergraph-closure} (3a) implies that \(\eta_{\{1\}\cup B}\in H_2\) forces \(\eta_{\{2\}\cup B}\in H_2\), and taking complements yields \(\eta_{\{1\}\cup B^c},\eta_{\{2\}\cup B^c}\in H_2\), where \(B^c:=\{3,4,5,6,7,8\}\setminus B\). In particular, \(\mathcal T\) is closed under taking complements.

Let \(B,C\in \mathcal T\) be distinct. If \(|B\cap C|=2\), then \(|(\{1\}\cup B)\cap(\{1\}\cup C)|=3\), so Lemma~\ref{lem:H1H2-hypergraph-closure} (4a) yields \(\varepsilon_{B\triangle C}\in H_1\), impossible since \(B\triangle C\subset \{3,\dots,8\}\). If \(|B\cap C|=1\), then \(|(\{1\}\cup B)\cap(\{1\}\cup C)|=2\), so Lemma~\ref{lem:H1H2-hypergraph-closure} (4b) yields \(\eta_{B\triangle C}\in H_2\), impossible because \(B\triangle C\subset \{3,\dots,8\}\) and hence meets \(\{1,2\}\) in no points. Thus \(B\cap C=\emptyset\), so \(C=B^c\). Therefore \(\mathcal T\) is either empty or consists of one complementary pair \(\{B,B^c\}\).

If \(\mathcal T=\emptyset\), then \(H_2=\emptyset\), giving case (1). If \(\mathcal T=\{B,B^c\}\), then, up to relabeling of the coordinates \(3,4,5,6,7,8\), we may assume that \(B=\{3,4,5\}\) and \(B^c=\{6,7,8\}\). Hence \[H_2=\{\eta_{\{1,3,4,5\}},\eta_{\{2,3,4,5\}},\eta_{\{1,6,7,8\}},\eta_{\{2,6,7,8\}}\},\] which is case (2).
\end{proof}

\begin{Proposition}[Case \(G_\Delta\cong 2K_2\)]\label{prop:case-H1-2K2}
Assume that \(H_1=\{\varepsilon_{12},\varepsilon_{34}\}\). Then, up to relabeling of the coordinates \(5,6,7,8\), \(H_2\) is one of the following two possibilities:
\begin{enumerate}
\item \(H_2=\{\eta_{\{1,2,3,4\}},\eta_{\{5,6,7,8\}}\}\);
\item \(H_2=\{\eta_A:A\in \mathcal C_0\}\), where
\[
\begin{aligned}
\mathcal C_0:=\{&
\{1,2,3,4\},\{5,6,7,8\},\{1,3,5,6\},\{2,3,5,6\},\{1,4,5,6\},\{2,4,5,6\},\\
&
\{1,3,7,8\},\{2,3,7,8\},\{1,4,7,8\},\{2,4,7,8\}
\}.
\end{aligned}
\]
\end{enumerate}
\end{Proposition}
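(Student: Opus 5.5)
First, using Lemma~\ref{lem:H1H2-hypergraph-closure} (2) with the disjoint edges \(\{1,2\}\) and \(\{3,4\}\), I will get \(\eta_{\{1,2,3,4\}}\in H_2\); by (1) this also gives \(\eta_{\{5,6,7,8\}}\in H_2\). So \(H_2\) always contains the two sets of case (1). The rest of the proof classifies the possible additional members \(A\) of \(H_2\) by their intersection pattern with the blocks \(\{1,2\}\), \(\{3,4\}\), \(\{5,6,7,8\}\).

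Let \(\eta_A\in H_2\) with \(A\neq\{1,2,3,4\},\{5,6,7,8\}\). I will rule out \(\{1,2\}\subset A\). In that case (3b) gives \(\varepsilon_{A\setminus\{1,2\}}\in H_1\), so \(A\setminus\{1,2\}=\{3,4\}\), contradicting the choice of \(A\). By the same argument \(\{3,4\}\not\subset A\). Next I will rule out \(A\cap\{1,2\}=\emptyset\). Then (3c) says that \(\{1,\dots,8\}\setminus(A\cup\{1,2\})\), a \(2\)-subset of \(\{3,\dots,8\}\setminus A\), is an edge, so it must equal \(\{3,4\}\). That means \(A=\{5,6,7,8\}\), which is excluded. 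Symmetrically, \(A\cap\{3,4\}\neq\emptyset\). Hence \(A\) has exactly one element from \(\{1,2\}\), exactly one from \(\{3,4\}\), and a \(2\)-subset \(P\subset\{5,6,7,8\}\). By (3a) applied to \(\varepsilon_{12}\) and \(\varepsilon_{34}\), all four choices \(\{a,b\}\cup P\) with \(a\in\{1,2\}\), \(b\in\{3,4\}\) lie in \(H_2\). Taking complements via (1), the same holds for \(\{5,6,7,8\}\setminus P\).

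So the set \(\mathcal P\) of \(2\)-subsets \(P\subset\{5,6,7,8\}\) that occur is closed under complement within \(\{5,6,7,8\}\), and it determines \(H_2\). The main step is to show that \(\mathcal P\) has at most one complementary pair. Suppose \(P,Q\in\mathcal P\) with \(Q\neq P\) and \(Q\neq P^c\), so \(|P\cap Q|=1\). Take \(A=\{1,3\}\cup P\) and \(B=\{1,3\}\cup Q\). Then \(|A\cap B|=3\), and (4a) gives \(\varepsilon_{P\triangle Q}\in H_1\) with \(P\triangle Q\subset\{5,6,7,8\}\). This contradicts \(H_1=\{\varepsilon_{12},\varepsilon_{34}\}\). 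I should also confirm that the pairs of sets from the structure that do occur produce nothing new. For example, \(\{1,3\}\cup P\) and \(\{2,4\}\cup P^c\) meet in \(\emptyset\), and \(\{1,3\}\cup P\) and \(\{1,4\}\cup P^c\) meet in \(1\) point, so (4c) must produce an existing edge. That check is consistency rather than necessity, because the statement only claims that \(H_2\) is one of the two listed sets.

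Finally, if \(\mathcal P=\emptyset\) we are in case (1). Otherwise, relabel \(5,6,7,8\) so that \(\mathcal P=\{\{5,6\},\{7,8\}\}\). This yields exactly the ten sets in \(\mathcal C_0\), which is case (2). I expect the only delicate point to be checking that the case analysis of \(|A\cap\{1,2\}|\) and \(|A\cap\{3,4\}|\) really is exhaustive. In particular I need to handle carefully the subcase where \(A\) contains one of the pairs and misses the other, where (3b) or (3c) forces \(A=\{1,2,3,4\}\).
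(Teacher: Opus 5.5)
Your proposal is correct and follows essentially the same route as the paper's proof. You use (2) and (1) to get the two fixed sets, then (3b)/(3c) to force every other $A$ to meet each of $\{1,2\}$ and $\{3,4\}$ in exactly one point, then (3a) plus complements to fill out the orbit, and finally (4a) on two sets sharing $\{1,3\}$ to exclude a second complementary pair of $2$-subsets of $\{5,6,7,8\}$. Your bookkeeping via $\mathcal P$ is slightly cleaner than the paper's step of replacing $C$ by its complement to reach the form $\{1,3\}\cup D$, and it needs only the relabeling of $5,6,7,8$ that the statement allows.
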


\begin{proof}
By Lemma~\ref{lem:H1H2-hypergraph-closure} (2) and (1), one has \(\eta_{\{1,2,3,4\}},\eta_{\{5,6,7,8\}}\in H_2\).

Let \(\eta_A\in H_2\). If \(\{1,2\}\subset A\), then Lemma~\ref{lem:H1H2-hypergraph-closure} (3b) yields \(\varepsilon_{A\setminus \{1,2\}}\in H_1\), so \(A\setminus \{1,2\}=\{3,4\}\), that is, \(A=\{1,2,3,4\}\). Similarly, if \(\{3,4\}\subset A\), then \(A=\{1,2,3,4\}\). If \(A\cap \{1,2\}=\emptyset\), then Lemma~\ref{lem:H1H2-hypergraph-closure} (3c) yields \(\varepsilon_{\{1,\dots,8\}\setminus (A\cup \{1,2\})}\in H_1\), so \(\{1,\dots,8\}\setminus (A\cup \{1,2\})=\{3,4\}\), hence \(A=\{5,6,7,8\}\). Similarly, if \(A\cap \{3,4\}=\emptyset\), then \(A=\{5,6,7,8\}\). Therefore every other element \(\eta_A\in H_2\) satisfies \(|A\cap \{1,2\}|=|A\cap \{3,4\}|=1\).

Suppose that such an additional element exists. After relabeling \(1\leftrightarrow 2\), \(3\leftrightarrow 4\), and \(5,6,7,8\), we may assume that \(\eta_{\{1,3,5,6\}}\in H_2\). Applying Lemma~\ref{lem:H1H2-hypergraph-closure} (3a) to \(\varepsilon_{12}\) and \(\varepsilon_{34}\), we obtain \[\eta_{\{2,3,5,6\}},\eta_{\{1,4,5,6\}},\eta_{\{2,4,5,6\}}\in H_2.\] Taking complements gives \(\eta_{\{2,4,7,8\}},\eta_{\{1,4,7,8\}},\eta_{\{2,3,7,8\}},\eta_{\{1,3,7,8\}}\in H_2\). Thus case (2) is forced.

It remains to show that no further element can occur. Let \(\eta_C\in H_2\). If \(C\notin \{\{1,2,3,4\},\{5,6,7,8\}\}\), then \(|C\cap \{1,2\}|=|C\cap \{3,4\}|=1\). Replacing \(C\) by \(C^c\) if necessary, we may write \(C=\{1,3\}\cup D\) for some \(D\subset \{5,6,7,8\}\) with \(|D|=2\). If \(D\neq \{5,6\},\{7,8\}\), then \(|D\cap \{5,6\}|=1\), so \(|(\{1,3\}\cup D)\cap \{1,3,5,6\}|=3\). Lemma~\ref{lem:H1H2-hypergraph-closure} (4a) then gives \(\varepsilon_{(\{1,3\}\cup D)\triangle \{1,3,5,6\}}\in H_1\), but its support is contained in \(\{5,6,7,8\}\), a contradiction. Hence \(D=\{5,6\}\) or \(\{7,8\}\), and \(C\) is one of the sets listed in case (2).
\end{proof}

\begin{Proposition}[Case \(G_\Delta\cong K_3\)]\label{prop:case-H1-K3}
Assume that \(H_1=\{\varepsilon_{12},\varepsilon_{13},\varepsilon_{23}\}\). Then \(H_2=\emptyset\).
\end{Proposition}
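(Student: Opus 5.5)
Suppose, for contradiction, that \(\eta_A\in H_2\) for some \(A\subset\{1,\dots,8\}\) with \(|A|=4\). We will show that every possible position of \(A\) relative to \(\{1,2,3\}\) contradicts \(H_1=\{\varepsilon_{12},\varepsilon_{13},\varepsilon_{23}\}\). The only tool needed is Lemma~\ref{lem:H1H2-hypergraph-closure}, together with the fact that every element of \(H_1\) has support inside \(\{1,2,3\}\). We split according to \(|A\cap\{1,2,3\}|\in\{0,1,2,3\}\).

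First, suppose \(|A\cap\{1,2,3\}|\ge 2\). Pick two indices \(i,j\in A\cap\{1,2,3\}\). By Lemma~\ref{lem:H1H2-hypergraph-closure} (3b), \(\varepsilon_{A\setminus\{i,j\}}\in H_1\). The set \(A\setminus\{i,j\}\) has two elements, so it must equal one of \(\{1,2\},\{1,3\},\{2,3\}\). That would force \(A\subset\{1,2,3\}\), which is impossible since \(|A|=4\).

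Next, suppose \(A\cap\{1,2,3\}=\emptyset\). Then \(A\subset\{4,\dots,8\}\), and Lemma~\ref{lem:H1H2-hypergraph-closure} (3c) applied with \(\varepsilon_{12}\) gives \(\varepsilon_{\{1,\dots,8\}\setminus(A\cup\{1,2\})}\in H_1\). The complement of \(A\cup\{1,2\}\) is \(\{3,k\}\), where \(k\) is the unique element of \(\{4,\dots,8\}\setminus A\). Since \(k\ge 4\), the pair \(\{3,k\}\) is not an edge of \(G_\Delta\), a contradiction.

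Finally, suppose \(|A\cap\{1,2,3\}|=1\); by symmetry, \(A\cap\{1,2,3\}=\{1\}\). The complement \(A^c\) then meets \(\{1,2,3\}\) in \(\{2,3\}\), a set of size \(2\). By Lemma~\ref{lem:H1H2-hypergraph-closure} (1), \(\eta_{A^c}\in H_2\), so \(A^c\) falls into the first case, which is impossible. Alternatively, Lemma~\ref{lem:H1H2-hypergraph-closure} (3a) applied to \(\varepsilon_{12}\) gives \(\eta_{A\triangle\{1,2\}}\in H_2\), and \(A\triangle\{1,2\}\) meets \(\{1,2,3\}\) only in \(\{2\}\); the complement argument is the quicker route.

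The three cases exhaust all possibilities, so \(H_2=\emptyset\). There is no real obstacle here. The only step requiring any care is the disjoint case: one must check that the pair produced by (3c) contains an index outside \(\{1,2,3\}\), which is just a size count.
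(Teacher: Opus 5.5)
Your proof is correct and is essentially the paper's argument: use the complement (Lemma~\ref{lem:H1H2-hypergraph-closure}~(1)) to reduce to $|A\cap\{1,2,3\}|\ge 2$, then apply (3b) to an edge $\{i,j\}\subset A$ and note that $A\setminus\{i,j\}$ cannot lie inside $\{1,2,3\}$. Your direct (3c) treatment of the disjoint case is also valid; the paper instead folds that case into the same complement reduction.
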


\begin{proof}
Suppose that \(\eta_A\in H_2\) for some \(A\subset \{1,\dots,8\}\) with \(|A|=4\). Since \(\eta_{A^c}\in H_2\), one has \(|A\cap \{1,2,3\}|+|A^c\cap \{1,2,3\}|=3\), so we may assume that \(|A\cap \{1,2,3\}|\geq 2\). Then \(A\) contains one of the edges \(\{1,2\},\{1,3\},\{2,3\}\), say \(e\). By Lemma~\ref{lem:H1H2-hypergraph-closure} (3b), one has \(\varepsilon_{A\setminus e}\in H_1\). However, \(A\setminus e\) contains one element of \(\{1,2,3\}\) and one element of \(\{4,5,6,7,8\}\), a contradiction.
\end{proof}

\begin{Proposition}[Case \(G_\Delta\cong 4 K_2\)]\label{prop:case-H1-4K2}
Assume that \(H_1=\{\varepsilon_{12},\varepsilon_{34},\varepsilon_{56},\varepsilon_{78}\}\). Let
\[
\mathcal A_0:=\{\{1,2,3,4\},\ \{1,2,5,6\},\ \{1,2,7,8\},\ \{3,4,5,6\},\ \{3,4,7,8\},\ \{5,6,7,8\}\}
\]
and
\[
\mathcal T:=\{A\subset \{1,\dots,8\}: |A|=4,\ |A\cap \{1,2\}|=|A\cap \{3,4\}|=|A\cap \{5,6\}|=|A\cap \{7,8\}|=1\}.
\]
Then \(H_2\) is one of the following two sets:
\begin{enumerate}
\item \(H_2=\{\eta_A:A\in \mathcal A_0\}\);
\item \(H_2=\{\eta_A:A\in \mathcal A_0\cup \mathcal T\}\).
\end{enumerate}
\end{Proposition}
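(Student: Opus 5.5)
First, Lemma~\ref{lem:H1H2-hypergraph-closure} (2) applied to each pair of disjoint edges among \(\{1,2\},\{3,4\},\{5,6\},\{7,8\}\) gives \(\eta_A\in H_2\) for every \(A\in\mathcal A_0\). The rest of the argument shows that any further element of \(H_2\) lies in \(\mathcal T\), and that one element of \(\mathcal T\) forces all of \(\mathcal T\).

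Let \(\eta_A\in H_2\) with \(A\notin\mathcal A_0\). Call the four pairs \(P_1=\{1,2\}\), \(P_2=\{3,4\}\), \(P_3=\{5,6\}\), \(P_4=\{7,8\}\).
\begin{enumerate}
\item Suppose \(P_k\subset A\) for some \(k\). Then Lemma~\ref{lem:H1H2-hypergraph-closure} (3b) gives \(\varepsilon_{A\setminus P_k}\in H_1\). So \(A\setminus P_k\) is another pair \(P_l\), and \(A=P_k\cup P_l\in\mathcal A_0\), a contradiction.
\item Suppose \(A\cap P_k=\emptyset\) for some \(k\). Then Lemma~\ref{lem:H1H2-hypergraph-closure} (3c) gives \(\varepsilon_{\{1,\dots,8\}\setminus(A\cup P_k)}\in H_1\). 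So the complement of \(A\cup P_k\) is a pair \(P_l\), hence \(A\) is the union of the remaining two pairs and again lies in \(\mathcal A_0\), a contradiction.
\end{enumerate}
Hence \(|A\cap P_k|=1\) for all \(k\), that is, \(A\in\mathcal T\). This shows \(H_2\subseteq\{\eta_A: A\in\mathcal A_0\cup\mathcal T\}\).

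Next, suppose \(H_2\) contains some \(\eta_A\) with \(A\in\mathcal T\). Since \(|A\cap P_k|=1\), Lemma~\ref{lem:H1H2-hypergraph-closure} (3a) with \(\varepsilon\) for \(P_k\) gives \(\eta_{A\triangle P_k}\in H_2\). The set \(A\triangle P_k\) is \(A\) with its chosen element of \(P_k\) swapped for the other one. The group generated by these four swaps acts transitively on \(\mathcal T\), which has \(2^4=16\) elements. Iterating the swaps therefore gives all of \(\mathcal T\) inside \(H_2\), so \(H_2=\{\eta_A:A\in\mathcal A_0\cup\mathcal T\}\).

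If no element of \(\mathcal T\) occurs, then \(H_2=\{\eta_A:A\in\mathcal A_0\}\). These are cases (1) and (2).

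The main step is the exclusion argument showing that every extra element lies in \(\mathcal T\). It reduces cleanly to the two subcases above, using (3b) and (3c) together with the fact that \(H_1\) consists of exactly the four pairs \(P_k\). No consistency check is needed, since the statement only asserts that \(H_2\) is one of the two sets.
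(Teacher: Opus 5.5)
Your proposal is correct and follows the paper's proof essentially step for step: $\mathcal A_0\subset H_2$ via Lemma~\ref{lem:H1H2-hypergraph-closure}~(2), then (3b) and (3c) rule out $|A\cap P_k|\in\{0,2\}$ for $A\notin\mathcal A_0$, and repeated use of (3a) propagates a single element of $\mathcal T$ to all of $\mathcal T$. The only difference is cosmetic: you phrase the last step as the four swaps acting transitively on $\mathcal T$, whereas the paper first normalizes to $A=\{1,3,5,7\}$.
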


\begin{proof}
By Lemma~\ref{lem:H1H2-hypergraph-closure} (2), one has \(\{\eta_A:A\in \mathcal A_0\}\subset H_2\).

Let \(\eta_A\in H_2\). If \(A\) contains one of the pairs \(\{1,2\},\{3,4\},\{5,6\},\{7,8\}\), say \(\{1,2\}\subset A\), then Lemma~\ref{lem:H1H2-hypergraph-closure} (3b) yields \(\varepsilon_{A\setminus \{1,2\}}\in H_1\). Hence \(A\setminus \{1,2\}\) must be one of \(\{3,4\},\{5,6\},\{7,8\}\), so \(A\in \mathcal A_0\). If \(A\cap \{1,2\}=\emptyset\), then Lemma~\ref{lem:H1H2-hypergraph-closure} (3c) yields \(\varepsilon_{\{1,\dots,8\}\setminus (A\cup \{1,2\})}\in H_1\), so \(\{1,\dots,8\}\setminus (A\cup \{1,2\})\) is one of \(\{3,4\},\{5,6\},\{7,8\}\), and again \(A\in \mathcal A_0\). Thus if \(A\notin \mathcal A_0\), then \(A\) meets each of the four pairs \(\{1,2\},\{3,4\},\{5,6\},\{7,8\}\) in exactly one point, so \(A\in \mathcal T\).

Now suppose that there exists \(A\in \mathcal T\) with \(\eta_A\in H_2\). By relabeling inside each pair, we may assume that \(A=\{1,3,5,7\}\). Applying Lemma~\ref{lem:H1H2-hypergraph-closure} (3a) repeatedly to \(\varepsilon_{12},\varepsilon_{34},\varepsilon_{56},\varepsilon_{78}\), we obtain that every set in \(\mathcal T\) also belongs to \(H_2\). Hence \(\{\eta_A:A\in \mathcal A_0\cup \mathcal T\}\subset H_2\). Since every element of \(H_2\) lies in \(\mathcal A_0\cup \mathcal T\), the conclusion follows.
\end{proof}

\begin{Proposition}[Case \(G_\Delta\cong K_3 \sqcup K_2\)]\label{prop:case-H1-K3K2}
Assume that \(H_1=\{\varepsilon_{12},\varepsilon_{13},\varepsilon_{23},\varepsilon_{45}\}\). Then
\[
H_2=\{\eta_{\{1,2,4,5\}},\eta_{\{1,3,4,5\}},\eta_{\{2,3,4,5\}},\eta_{\{3,6,7,8\}},\eta_{\{2,6,7,8\}},\eta_{\{1,6,7,8\}}\}.
\]
\end{Proposition}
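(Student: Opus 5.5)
Throughout, write $\{1,2,3\}$ for the triangle and $\{4,5\}$ for the isolated edge, and let $R=\{6,7,8\}$ be the isolated vertices. The proof has two parts: first show that the six listed sets are forced, then show that no other $A$ with $\eta_A\in H_2$ can occur.

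\emph{The listed sets are forced.} Apply Lemma~\ref{lem:H1H2-hypergraph-closure}~(2) to the disjoint edges $\varepsilon_{12}$ and $\varepsilon_{45}$; this gives $\eta_{\{1,2,4,5\}}\in H_2$. In the same way $\varepsilon_{13},\varepsilon_{45}$ give $\eta_{\{1,3,4,5\}}$, and $\varepsilon_{23},\varepsilon_{45}$ give $\eta_{\{2,3,4,5\}}$. Taking complements by Lemma~\ref{lem:H1H2-hypergraph-closure}~(1) gives $\eta_{\{3,6,7,8\}}$, $\eta_{\{2,6,7,8\}}$ and $\eta_{\{1,6,7,8\}}$. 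So the six listed elements lie in $H_2$.

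\emph{No other sets occur.} Let $\eta_A\in H_2$ be arbitrary. Since $A^c$ also gives an element of $H_2$, and the listed family is closed under complements, we may replace $A$ by $A^c$ and assume $|A\cap\{1,2,3\}|\ge 2$. We split by this intersection size.

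If $|A\cap\{1,2,3\}|=2$, then $A$ contains a triangle edge $e$. Lemma~\ref{lem:H1H2-hypergraph-closure}~(3b) gives $\varepsilon_{A\setminus e}\in H_1$. The set $A\setminus e$ avoids $\{1,2,3\}$, so the only available edge is $\{4,5\}$. Hence $A=e\cup\{4,5\}$, which is one of the three listed sets containing $\{4,5\}$.

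If $\{1,2,3\}\subset A$, write $A=\{1,2,3,x\}$. Then (3b) with $e=\{1,2\}$ gives $\varepsilon_{\{3,x\}}\in H_1$. This forces $x\in\{1,2\}$, which is a contradiction.

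The step needing the most care is the reduction via complements. One must check that for $|A\cap\{1,2,3\}|\le 1$ the complement has intersection $\ge 2$, which is automatic since the two intersections sum to $3$. One must also check that complementing a listed set stays within the listed family, which holds because the family was built as three sets together with their complements.

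This completes the case analysis and identifies $H_2$ exactly.
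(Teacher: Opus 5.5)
Your proof is correct, but it organizes the exclusion step differently from the paper. The paper splits according to $|A\cap\{4,5\}|$. The cases $|A\cap\{4,5\}|=2$ and $|A\cap\{4,5\}|=0$ are settled by Lemma~\ref{lem:H1H2-hypergraph-closure}~(3b) and (3c) respectively. The case $|A\cap\{4,5\}|=1$ needs a further split by $|A\cap\{1,2,3\}|$, with (3b) ruling out $|A\cap\{1,2,3\}|\ge 2$ and two more applications of (3c) ruling out $0$ and $1$.

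You instead use closure under complements, Lemma~\ref{lem:H1H2-hypergraph-closure}~(1), together with the fact that the target family is itself closed under complements. This lets you normalize to $|A\cap\{1,2,3\}|\ge 2$, after which two applications of (3b) finish the argument. This is the same device the paper uses for the $K_3$, $2K_3$ and $K_4$ cases. Since (3c) is exactly (3b) applied to $A^c$, your reduction absorbs all of the paper's (3c) subcases at once, giving a shorter proof with fewer cases. The paper's version works with $A$ directly and never needs the complement-closure check on the listed family, at the cost of a longer case analysis.
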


\begin{proof}
By Lemma~\ref{lem:H1H2-hypergraph-closure}, \(H_2\) contains all six sets listed in the statement.

Let \(\eta_A\in H_2\). If \(\{4,5\}\subset A\), then Lemma~\ref{lem:H1H2-hypergraph-closure} (3b) yields \(\varepsilon_{A\setminus \{4,5\}}\in H_1\), so \(A\setminus \{4,5\}\) must be one of \(\{1,2\},\{1,3\},\{2,3\}\). Hence \(A\) is one of \(\{1,2,4,5\},\{1,3,4,5\},\{2,3,4,5\}\). If \(A\cap \{4,5\}=\emptyset\), then Lemma~\ref{lem:H1H2-hypergraph-closure} (3c) yields \(\varepsilon_{\{1,\dots,8\}\setminus (A\cup \{4,5\})}\in H_1\), so this support must be one of \(\{1,2\},\{1,3\},\{2,3\}\). Hence \(A\) is one of \(\{3,6,7,8\},\{2,6,7,8\},\{1,6,7,8\}\).

Therefore, if \(A\) is not one of the above six sets, then \(|A\cap \{4,5\}|=1\). 
If \(A\) contains one of the edges \(\{1,2\},\{1,3\},\{2,3\}\), say $e$, then it then follows from Lemma~\ref{lem:H1H2-hypergraph-closure} (3b) that $\varepsilon_{A \setminus e} \in H_1$. However, $A \setminus e$ contains one element of $\{4,5\}$ and one element of $\{1,2,3,6,7,8\}$, a contradiction. Hence \(A\) cannot contain any of the edges \(\{1,2\},\{1,3\},\{2,3\}\). This implies  \(|A\cap \{1,2,3\}|\le 1\). If \(|A\cap \{1,2,3\}|=0\), then \(A\cap \{1,2\}=\emptyset\), and Lemma~\ref{lem:H1H2-hypergraph-closure} (3c) gives \(\varepsilon_{\{1,\dots,8\}\setminus (A\cup \{1,2\})}\in H_1\); but this support is of the form \(\{3,i\}\) with \(i\in \{4,5\}\), impossible. If \(|A\cap \{1,2,3\}|=1\), let \(i\) be the unique element of \(A\cap \{1,2,3\}\), and let \(\{j,k\}\) be the complementary edge in \(\{1,2,3\}\). Then \(A\cap \{j,k\}=\emptyset\), so Lemma~\ref{lem:H1H2-hypergraph-closure} (3c) yields \(\varepsilon_{\{1,\dots,8\}\setminus (A\cup \{j,k\})}\in H_1\), but this support contains one element of \(\{4,5\}\) and one element of \(\{6,7,8\}\), again impossible. Hence no further element can occur.
\end{proof}

\begin{Proposition}[Case \(G_\Delta\cong 2K_3\)]\label{prop:case-H1-2K3}
Assume that \(H_1=\{\varepsilon_{12},\varepsilon_{13},\varepsilon_{23},\varepsilon_{45},\varepsilon_{46},\varepsilon_{56}\}\). Set \(T_1:=\{1,2,3\}\), \(T_2:=\{4,5,6\}\), \(\mathcal E_1:=\{E\subset T_1:|E|=2\}\), and \(\mathcal E_2:=\{F\subset T_2:|F|=2\}\). Then
\[
H_2=\{\eta_{E\cup F}:E\in \mathcal E_1,\ F\in \mathcal E_2\}\cup \{\eta_{(\{1,\ldots,8\}\setminus (E\cup F))}:E\in \mathcal E_1,\ F\in \mathcal E_2\}.
\]
\end{Proposition}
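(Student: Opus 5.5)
We prove the two inclusions separately, using Lemma~\ref{lem:H1H2-hypergraph-closure} just as in the preceding cases.

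\emph{The listed sets lie in \(H_2\).} For \(E\in\mathcal E_1\) and \(F\in\mathcal E_2\), the edges \(\varepsilon_E,\varepsilon_F\in H_1\) are disjoint. Lemma~\ref{lem:H1H2-hypergraph-closure} (2) gives \(\eta_{E\cup F}\in H_2\), and (1) gives \(\eta_{\{1,\dots,8\}\setminus(E\cup F)}\in H_2\). Note that \(\{1,\dots,8\}\setminus(E\cup F)=\{a,b,7,8\}\), where \(a\) is the element of \(T_1\setminus E\) and \(b\) the element of \(T_2\setminus F\). So the right-hand side consists of \(9+9=18\) distinct sets, namely the sets \(E\cup F\) and the sets \(\{a,b,7,8\}\) with \(a\in T_1\), \(b\in T_2\).

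\emph{Every element of \(H_2\) is listed.} Let \(\eta_A\in H_2\). We split according to \(|A\cap T_1|\) and \(|A\cap T_2|\).

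\begin{enumerate}
\item Suppose \(|A\cap T_1|\ge 2\). Pick an edge \(e\subset A\cap T_1\). Lemma~\ref{lem:H1H2-hypergraph-closure} (3b) gives \(\varepsilon_{A\setminus e}\in H_1\), so \(A\setminus e\) is an edge of \(G_\Delta\).
\begin{enumerate}
\item If \(|A\cap T_1|=3\), then \(A\setminus e\) contains the third point of \(T_1\) together with one point outside \(T_1\). This is not an edge, a contradiction.
\item Hence \(|A\cap T_1|=2\), and \(A\setminus e\) is an edge disjoint from \(T_1\). It must therefore be some \(F\in\mathcal E_2\), and so \(A=E\cup F\). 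The case \(|A\cap T_2|\ge2\) is symmetric.
\end{enumerate}
\item Suppose instead \(|A\cap T_1|\le1\) and \(|A\cap T_2|\le 1\). Since \(|A|=4\), we get \(A=\{a,b,7,8\}\) with \(a\in T_1\), \(b\in T_2\), which is of the second listed form. The remaining possibilities with fewer points in \(T_1\cup T_2\) would need \(|A\cap\{7,8\}|\ge3\), which is impossible.
\end{enumerate}

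The only genuine step is ruling out \(|A\cap T_1|=3\). One should also note that (3b) applied to any edge \(e\subset A\) always lands on an edge, so there is no hidden inconsistency. Finally, all the listed sets are pairwise consistent, since they already arise from genuine group elements. Therefore \(H_2\) is exactly the stated set.
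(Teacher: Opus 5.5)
Your proof is correct and follows essentially the same route as the paper: Lemma~\ref{lem:H1H2-hypergraph-closure} (2) and (1) give one inclusion, and (3b) excludes $|A\cap T_1|=3$ and forces $A\setminus E\in\mathcal E_2$ when $|A\cap T_1|=2$. The only difference is in the remaining case: the paper replaces $A$ by $A^c$ to assume $|A\cap T_1|\ge 2$, whereas you handle $|A\cap T_1|,|A\cap T_2|\le 1$ by a direct count, which is equally valid because every set $\{a,b,7,8\}$ with $a\in T_1$, $b\in T_2$ is already on the list.
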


\begin{proof}
Since every edge in \(T_1\) is disjoint from every edge in \(T_2\), Lemma~\ref{lem:H1H2-hypergraph-closure} (2) implies that \(\eta_{E\cup F}\in H_2\) for every \(E\in \mathcal E_1\) and \(F\in \mathcal E_2\). By Lemma~\ref{lem:H1H2-hypergraph-closure} (1), also \(\eta_{(\{1,\ldots,8\}\setminus (E\cup F))}\in H_2\). Thus the right-hand side is contained in \(H_2\).

Conversely, let \(\eta_A\in H_2\). Since \(\eta_{A^c}\in H_2\), one has \(|A\cap T_1|+|A^c\cap T_1|=3\). Hence, replacing \(A\) by \(A^c\) if necessary, we may assume that \(|A\cap T_1|\geq 2\). If \(|A\cap T_1|=3\), then \(A\) contains an edge \(E\subset T_1\), and Lemma~\ref{lem:H1H2-hypergraph-closure} (3b) yields \(\varepsilon_{A\setminus E}\in H_1\). However, \(A\setminus E\) contains one element of \(T_1\) and one element of \(\{4,5,6,7,8\}\), impossible. Thus \(|A\cap T_1|=2\). Similarly, \(|A\cap T_2|\neq 0,1,3\), so \(|A\cap T_2|=2\). Since \(|A|=4\), it follows that \(A=E\cup F\) for some \(E\in \mathcal E_1\) and \(F\in \mathcal E_2\). Therefore every element of \(H_2\) is listed in the statement.
\end{proof}

\begin{Proposition}[Case \(G_\Delta\cong K_4\)]\label{prop:case-H1-K4}
Assume that \(H_1=\{\varepsilon_{ij}:1\le i<j\le 4\}\). Then \[H_2=\{\eta_{\{1,2,3,4\}},\eta_{\{5,6,7,8\}}\}.\]
\end{Proposition}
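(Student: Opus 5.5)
The plan follows the same pattern as the preceding cases: first show that the two listed sets lie in \(H_2\), then rule out every other \(4\)-subset using Lemma~\ref{lem:H1H2-hypergraph-closure}. Set \(Q:=\{1,2,3,4\}\).

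\emph{Containment.} Since \(\varepsilon_{12},\varepsilon_{34}\in H_1\) are disjoint, Lemma~\ref{lem:H1H2-hypergraph-closure} (2) gives \(\eta_{Q}\in H_2\). Part (1) of that lemma then gives \(\eta_{\{5,6,7,8\}}\in H_2\).

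\emph{No other sets.} Let \(\eta_A\in H_2\). Replacing \(A\) by \(A^c\) using part (1), we may assume \(|A\cap Q|\ge 2\). We then argue by the size of \(A\cap Q\).

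\begin{enumerate}
\item If \(|A\cap Q|=4\), then \(A=Q\), as claimed.
\item If \(|A\cap Q|=2\) or \(3\), then \(A\) contains some edge \(e\subset Q\). Since \(G_\Delta\cong K_4\) on \(Q\), every pair inside \(Q\) is an edge, so we may choose \(e\) to be any pair in \(A\cap Q\). Part (3b) yields \(\varepsilon_{A\setminus e}\in H_1\).
\begin{enumerate}
\item When \(|A\cap Q|=3\), the set \(A\setminus e\) consists of one point of \(Q\) and one point of \(\{5,6,7,8\}\).
\item When \(|A\cap Q|=2\), the set \(A\setminus e\) lies inside \(\{5,6,7,8\}\).
\end{enumerate}
In both subcases \(A\setminus e\) is not an edge of \(G_\Delta\), because every edge of \(G_\Delta\) lies inside \(Q\). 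This is a contradiction.
\end{enumerate}

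Hence \(A=Q\) or, after undoing the complement, \(A=\{5,6,7,8\}\).

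No step is a real obstacle here. The only point needing care is the reduction to \(|A\cap Q|\ge 2\) by complementation, which works because \(|A\cap Q|+|A^c\cap Q|=4\).
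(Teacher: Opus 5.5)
Your proposal is correct and follows the paper's proof essentially step for step. You get containment from parts (2) and (1), reduce to \(|A\cap Q|\ge 2\) by complementation, and rule out \(|A\cap Q|=2,3\) with part (3b), since \(A\setminus e\) cannot lie inside \(Q=\{1,2,3,4\}\).
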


\begin{proof}
Since \(\varepsilon_{12}\) and \(\varepsilon_{34}\) are disjoint elements of \(H_1\), Lemma~\ref{lem:H1H2-hypergraph-closure} (2) yields \(\eta_{\{1,2,3,4\}}\in H_2\), and Lemma~\ref{lem:H1H2-hypergraph-closure} (1) yields \(\eta_{\{5,6,7,8\}}\in H_2\).

Conversely, let \(\eta_A\in H_2\), and set \(T:=\{1,2,3,4\}\). Since \(\eta_{A^c}\in H_2\), one has \(|A\cap T|+|A^c\cap T|=4\). Replacing \(A\) by \(A^c\) if necessary, we may assume that \(|A\cap T|\geq 2\). If \(|A\cap T|=4\), then \(A=T\). If \(|A\cap T|=3\), then \(A\) contains an edge \(E\subset T\), and Lemma~\ref{lem:H1H2-hypergraph-closure} (3b) yields \(\varepsilon_{A\setminus E}\in H_1\), impossible since \(A\setminus E\) contains one element of \(T\) and one element of \(T^c\). If \(|A\cap T|=2\), then again \(A\) contains an edge \(E\subset T\), and Lemma~\ref{lem:H1H2-hypergraph-closure} (3b) yields \(\varepsilon_{A\setminus E}\in H_1\), impossible since \(A\setminus E\subset T^c\). Hence only \(|A\cap T|=4\) is possible, and thus \(A=T\) or \(A=T^c\).
\end{proof}

\begin{Proposition}[Case \(G_\Delta\cong K_4 \sqcup 2K_2\)]\label{prop:case-H1-K4-2K2}
Assume that \(H_1=\{\varepsilon_{ij}:1\le i<j\le 4\}\cup \{\varepsilon_{56},\varepsilon_{78}\}\). Then
\[
\begin{aligned}
H_2
=
&\{\eta_{\{1,2,3,4\}},\eta_{\{5,6,7,8\}}\}\\
&\cup \{\eta_{E\cup \{5,6\}}:E\subset \{1,2,3,4\},\ |E|=2\}\\
&\cup \{\eta_{E\cup \{7,8\}}:E\subset \{1,2,3,4\},\ |E|=2\}.
\end{aligned}
\]
\end{Proposition}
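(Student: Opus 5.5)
The proof has the same two halves as the previous cases: first show that the displayed sets lie in \(H_2\), then rule out every other \(4\)-set. Set \(T:=\{1,2,3,4\}\), \(P:=\{5,6\}\) and \(Q:=\{7,8\}\), so the edges of \(G_\Delta\) are the six pairs inside \(T\) together with \(P\) and \(Q\).

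For the inclusion, note that \(\varepsilon_{12}\) and \(\varepsilon_{34}\) are disjoint elements of \(H_1\). By Lemma~\ref{lem:H1H2-hypergraph-closure} (2) this gives \(\eta_{T}\in H_2\), and by (1) we also get \(\eta_{P\cup Q}=\eta_{T^c}\in H_2\). Every \(2\)-subset \(E\subset T\) is an edge disjoint from both \(P\) and \(Q\), so (2) gives \(\eta_{E\cup P},\eta_{E\cup Q}\in H_2\). This accounts for all \(2+6+6=14\) listed sets. As a check, the family is closed under complements, since \((E\cup P)^c=(T\setminus E)\cup Q\).

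For the converse, let \(\eta_A\in H_2\) and split according to \(|A\cap T|\in\{0,1,2,3,4\}\).

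\begin{itemize}
\item If \(|A\cap T|=4\), then \(A=T\).
\item If \(|A\cap T|=0\), then \(A=P\cup Q\).
\item If \(|A\cap T|=3\), then \(A\) contains an edge \(E\subset T\). Lemma~\ref{lem:H1H2-hypergraph-closure} (3b) gives \(\varepsilon_{A\setminus E}\in H_1\). But \(A\setminus E\) consists of one point of \(T\) and one point of \(T^c\), and no such edge exists.
\item If \(|A\cap T|=1\), pass to \(A^c\), which lies in \(H_2\) by (1) and satisfies \(|A^c\cap T|=3\). The previous case then gives a contradiction.
\item If \(|A\cap T|=2\), write \(A=E\cup D\) with \(E\subset T\) and \(D\subset P\cup Q\), both of size \(2\). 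Here (3b) gives \(\varepsilon_{D}\in H_1\), so \(D\) is an edge of \(G_\Delta\) inside \(\{5,6,7,8\}\). The only such edges are \(P\) and \(Q\), so \(A\) is one of the listed sets.
\end{itemize}

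This exhausts all cases. The only step needing care is the \(|A\cap T|=2\) case, where we must use that \(G_\Delta\) has no edges inside \(\{5,6,7,8\}\) other than \(P\) and \(Q\). No step is a genuine obstacle; the argument is a direct case check like Proposition~\ref{prop:case-H1-K4}.
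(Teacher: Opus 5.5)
Your proof is correct and follows essentially the same route as the paper: the inclusion comes from parts (1) and (2) of Lemma~\ref{lem:H1H2-hypergraph-closure}, and the converse is a case split on $|A\cap T|$ using (3b). The only cosmetic difference is that the paper first replaces $A$ by $A^c$ to assume $|A\cap T|\ge 2$, whereas you handle $|A\cap T|=0$ directly and $|A\cap T|=1$ by passing to the complement.
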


\begin{proof}
By Lemma~\ref{lem:H1H2-hypergraph-closure} (2) and (1), the right-hand side is contained in \(H_2\).

Conversely, let \(\eta_A\in H_2\), and set \(T:=\{1,2,3,4\}\). Since \(\eta_{A^c}\in H_2\), one has \(|A\cap T|+|A^c\cap T|=4\). Replacing \(A\) by \(A^c\) if necessary, we may assume that \(|A\cap T|\geq 2\). If \(|A\cap T|=4\), then \(A=T\). If \(|A\cap T|=3\), then \(A\) contains an edge \(E\subset T\), and Lemma~\ref{lem:H1H2-hypergraph-closure} (3b) yields \(\varepsilon_{A\setminus E}\in H_1\), impossible since \(A\setminus E\) contains one element of \(T\) and one element of \(\{5,6,7,8\}\). Thus \(|A\cap T|=2\). Let \(E:=A\cap T\). Since \(E\) contains an edge of \(T\), Lemma~\ref{lem:H1H2-hypergraph-closure} (3b) gives \(\varepsilon_{A\setminus E}\in H_1\). As \(A\setminus E\subset \{5,6,7,8\}\), one has \(A\setminus E=\{5,6\}\) or \(\{7,8\}\). Hence \(A\) is one of the sets listed in the statement.
\end{proof}

\begin{Proposition}[Case \(G_\Delta\cong 2K_4\)]\label{prop:case-H1-2K4}
Assume that \(H_1=\{\varepsilon_{ij}:1\le i<j\le 4\}\cup \{\varepsilon_{ij}:5\le i<j\le 8\}\). Then
\[
H_2=\{\eta_A: A\subset \{1,\dots,8\},\ |A|=4,\ |A\cap \{1,2,3,4\}|\in \{0,2,4\}\}.
\]
\end{Proposition}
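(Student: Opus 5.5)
Write \(T_1:=\{1,2,3,4\}\) and \(T_2:=\{5,6,7,8\}\). I will prove the two inclusions separately, following the pattern of Propositions~\ref{prop:case-H1-K4} and \ref{prop:case-H1-K4-2K2}.

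For the inclusion ``\(\supseteq\)'', consider a \(4\)-set \(A\) with \(|A\cap T_1|\in\{0,2,4\}\). Then \(A\) is \(T_1\), or \(T_2\), or \(E\cup F\) with \(E\subset T_1\), \(F\subset T_2\) and \(|E|=|F|=2\). In the first case, \(\varepsilon_{12}\) and \(\varepsilon_{34}\) are disjoint edges of \(H_1\), so Lemma~\ref{lem:H1H2-hypergraph-closure} (2) gives \(\eta_{T_1}\in H_2\). The case \(A=T_2\) is the same argument with \(\varepsilon_{56}\) and \(\varepsilon_{78}\), or it follows from \(A=T_1\) by taking complements via (1). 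In the third case, \(E\) and \(F\) are disjoint edges of \(G_\Delta\), because \(G_\Delta\) is complete on each \(T_i\). Hence (2) again gives \(\eta_{E\cup F}\in H_2\).

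For the inclusion ``\(\subseteq\)'', let \(\eta_A\in H_2\). It suffices to show that \(|A\cap T_1|\neq 1,3\). Since \(|A\cap T_2|=4-|A\cap T_1|\), the case \(|A\cap T_1|=1\) is the same as \(|A\cap T_2|=3\), and the roles of \(T_1\) and \(T_2\) are symmetric. So it is enough to exclude \(|A\cap T_1|=3\).

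Assume \(|A\cap T_1|=3\). Then \(A\) contains an edge \(E\subset T_1\) of \(G_\Delta\). By Lemma~\ref{lem:H1H2-hypergraph-closure} (3b), \(\varepsilon_{A\setminus E}\in H_1\). However, \(A\setminus E\) consists of one element of \(T_1\) and one element of \(T_2\). Such a pair is not an edge of \(G_\Delta=2K_4\), which is a contradiction. This finishes the proof.

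I do not expect any real obstacle. The only point that needs care is the symmetry reduction: excluding \(|A\cap T_1|=1\) is the same as excluding \(|A\cap T_2|=3\), which follows by the identical argument with \(T_1\) and \(T_2\) swapped.
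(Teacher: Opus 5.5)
Your proof is correct and follows essentially the same argument as the paper: the inclusion \(\supseteq\) comes from Lemma~\ref{lem:H1H2-hypergraph-closure} (2) (and (1)), and the case \(|A\cap\{1,2,3,4\}|=3\) is excluded via (3b). The only cosmetic difference is in the case \(|A\cap\{1,2,3,4\}|=1\): the paper passes to the complement \(A^c\) using part (1), whereas you use the symmetry between the two \(K_4\) components; both reductions are valid.
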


\begin{proof}
By Lemma~\ref{lem:H1H2-hypergraph-closure} (2) and (1), the right-hand side is contained in \(H_2\).

Conversely, let \(\eta_A\in H_2\), and set \(T:=\{1,2,3,4\}\). Since \(\eta_{A^c}\in H_2\), one has \(|A\cap T|+|A^c\cap T|=4\). Replacing \(A\) by \(A^c\) if necessary, we may assume that \(|A\cap T|\geq 2\). If \(|A\cap T|=4\), then we are done. If \(|A\cap T|=3\), then \(A\) contains an edge \(E\subset T\), and Lemma~\ref{lem:H1H2-hypergraph-closure} (3b) yields \(\varepsilon_{A\setminus E}\in H_1\), impossible since \(A\setminus E\) contains one element of \(T\) and one element of \(T^c\). Thus \(|A\cap T|=2\), which is allowed. Therefore \(|A\cap T|\in \{0,2,4\}\), as desired.
\end{proof}

\begin{Proposition}[Case \(G_\Delta\cong K_5\)]\label{prop:case-H1-K5}
Assume that \(H_1=\{\varepsilon_{ij}:1\le i<j\le 5\}\). Then
\[
H_2=\{\eta_A:A\subset \{1,2,3,4,5\},\ |A|=4\}\cup \{\eta_{\{i,6,7,8\}}:1\le i\le 5\}.
\]
\end{Proposition}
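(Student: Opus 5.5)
Following the pattern of the preceding cases, the proof has two parts. First we show that the right-hand side is contained in \(H_2\); then we show that every \(\eta_A\in H_2\) is one of the listed sets. Set \(T:=\{1,2,3,4,5\}\), so that \(T^c=\{6,7,8\}\).

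For the inclusion, take any \(4\)-subset \(A\subset T\). Choose two disjoint edges inside \(A\), for instance \(\{i,j\}\) and \(\{k,\ell\}\) with \(A=\{i,j,k,\ell\}\); both lie in \(H_1\) because \(G_\Delta\cong K_5\) on \(T\). Lemma~\ref{lem:H1H2-hypergraph-closure}~(2) then gives \(\eta_A\in H_2\). Its complement is \(A^c=\{m,6,7,8\}\), where \(m\) is the unique element of \(T\setminus A\). Lemma~\ref{lem:H1H2-hypergraph-closure}~(1) gives \(\eta_{A^c}\in H_2\). As \(A\) ranges over the five \(4\)-subsets of \(T\), \(m\) ranges over all of \(T\), so every \(\eta_{\{i,6,7,8\}}\) with \(1\le i\le 5\) lies in \(H_2\).

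For the reverse inclusion, let \(\eta_A\in H_2\). Since \(\eta_{A^c}\in H_2\) as well, and the right-hand side is closed under complements (the complement of \(\{i,6,7,8\}\) is \(T\setminus\{i\}\)), we may replace \(A\) by \(A^c\) whenever convenient. We split according to \(|A\cap T|\). Because \(|T^c|=3\), we always have \(|A\cap T|\ge 1\), so the possible values are \(1,2,3,4\).
\begin{itemize}
\item If \(|A\cap T|=4\), then \(A\subset T\) and \(A\) is listed.
\item If \(|A\cap T|=1\), then \(A=\{i,6,7,8\}\) and \(A\) is listed.
\item If \(|A\cap T|=3\), then \(A\) contains an edge \(E\subset A\cap T\). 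Lemma~\ref{lem:H1H2-hypergraph-closure}~(3b) yields \(\varepsilon_{A\setminus E}\in H_1\). But \(A\setminus E\) consists of one element of \(T\) and one element of \(\{6,7,8\}\), and no such edge exists in \(G_\Delta\). This is a contradiction.
\item If \(|A\cap T|=2\), then \(E:=A\cap T\) is an edge. Lemma~\ref{lem:H1H2-hypergraph-closure}~(3b) gives \(\varepsilon_{A\setminus E}\in H_1\) with \(A\setminus E\subset\{6,7,8\}\). Since \(G_\Delta\) has no edges inside \(\{6,7,8\}\), this is again a contradiction.
\end{itemize}
Hence \(A\) is one of the listed sets.

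The whole argument is a short case analysis, and I expect no real obstacle. The only point needing care is the counting in the reverse inclusion: the cases \(|A\cap T|\in\{2,3\}\) are excluded because \(G_\Delta\) has no edge meeting \(\{6,7,8\}\). As a consistency check, the resulting \(h^*\)-polynomial should be palindromic, \(1+10t+10t^2+10t^3+t^4\). Indeed \(|H_1|=|H_3|=10\), and \(|H_2|=10\) is consistent with \(|H_1|=|H_3|\) and the complement pairing.
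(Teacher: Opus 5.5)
Your proof is correct and is essentially the paper's argument: the inclusion comes from Lemma~\ref{lem:H1H2-hypergraph-closure}~(2) and (1), and the reverse inclusion is a case split on $|A\cap T|$, where the cases $2$ and $3$ are ruled out by (3b) because $G_\Delta$ has no edge meeting $\{6,7,8\}$. Your remark about replacing $A$ by $A^c$ is never used and can be dropped.
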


\begin{proof}
By Lemma~\ref{lem:H1H2-hypergraph-closure} (2) and (1), the right-hand side is contained in \(H_2\).

Conversely, let \(\eta_A\in H_2\), and set \(T:=\{1,2,3,4,5\}\). Since \(|A|=4\), one has \(1\le |A\cap T|\le 4\). If \(|A\cap T|=4\), then \(A\subset T\), so \(A\) is listed. If \(|A\cap T|=3\), then \(A\) contains an edge \(E\subset T\), and Lemma~\ref{lem:H1H2-hypergraph-closure} (3b) yields \(\varepsilon_{A\setminus E}\in H_1\), impossible since \(A\setminus E\) contains one element of \(T\) and one element of \(\{6,7,8\}\). If \(|A\cap T|=2\), then again \(A\) contains an edge \(E\subset T\), and Lemma~\ref{lem:H1H2-hypergraph-closure} (3b) yields \(\varepsilon_{A\setminus E}\in H_1\), impossible since \(A\setminus E\subset \{6,7,8\}\). Hence \(|A\cap T|=1\), and therefore \(A=\{i,6,7,8\}\) for some \(1\le i\le 5\).
\end{proof}

\begin{Proposition}[Case \(G_\Delta\cong K_6 \sqcup K_2\)]\label{prop:case-H1-K6K2}
Assume that \(H_1=\{\varepsilon_{ij}:1\le i<j\le 6\}\cup \{\varepsilon_{78}\}\). Then
\[
H_2=\{\eta_A:A\subset \{1,\dots,6\},\ |A|=4\}\cup \{\eta_{E\cup \{7,8\}}:E\subset \{1,\dots,6\},\ |E|=2\}.
\]
\end{Proposition}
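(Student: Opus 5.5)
We argue in two directions, in the same pattern as Propositions~\ref{prop:case-H1-K4-2K2} and \ref{prop:case-H1-K5}. Set \(T:=\{1,\dots,6\}\).

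First we show that the right-hand side is contained in \(H_2\). Let \(E\subset T\) with \(|E|=2\). Then \(\varepsilon_E\) and \(\varepsilon_{78}\) are disjoint elements of \(H_1\), so Lemma~\ref{lem:H1H2-hypergraph-closure} (2) gives \(\eta_{E\cup\{7,8\}}\in H_2\). Next let \(A\subset T\) with \(|A|=4\). Split \(A\) into two disjoint edges of \(K_6\); the same lemma (2) gives \(\eta_A\in H_2\). Alternatively, take complements of the sets \(E\cup\{7,8\}\) using (1); this yields exactly the \(4\)-subsets of \(T\). As a check, the two families consist of \(15\) sets each and are exchanged by complementation, which fits the palindromic symmetry \(\xb+H_2=H_2\).

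Conversely, let \(\eta_A\in H_2\) and split into cases according to \(k:=|A\cap T|\). Since \(|A|=4\) and \(|T^c|=2\), we have \(2\le k\le 4\).
\begin{enumerate}
\item If \(k=4\), then \(A\subset T\) and \(A\) is listed.
\item If \(k=2\), then \(A=E\cup\{7,8\}\) with \(E\subset T\), \(|E|=2\), which is listed.
\item If \(k=3\), then \(A\) contains an edge \(E\subset T\) of \(G_\Delta\). Lemma~\ref{lem:H1H2-hypergraph-closure} (3b) gives \(\varepsilon_{A\setminus E}\in H_1\). But \(A\setminus E\) consists of one element of \(T\) and one element of \(\{7,8\}\), and no such pair is an edge of \(K_6\sqcup K_2\). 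This is a contradiction, so \(k=3\) cannot occur.
\end{enumerate}

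Hence \(H_2\) is exactly the stated set. There is no real obstacle here. The only care needed is the case count: \(k\) cannot be \(0\) or \(1\) simply because \(|T^c|=2\), so no separate use of (3c) is required.
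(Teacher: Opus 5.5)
Your proof is correct and follows the paper's argument essentially verbatim: you get the inclusion from Lemma~\ref{lem:H1H2-hypergraph-closure} (2) and (1), then split on \(|A\cap T|\in\{2,3,4\}\) and use (3b) to rule out \(|A\cap T|=3\). The only difference is in the case \(|A\cap T|=2\), where you read off \(\{7,8\}\subset A\) directly from cardinality, while the paper invokes (3b) once more, which is unnecessary there.
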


\begin{proof}
By Lemma~\ref{lem:H1H2-hypergraph-closure} (2) and (1), the right-hand side is contained in \(H_2\).

Conversely, let \(\eta_A\in H_2\), and set \(T:=\{1,\dots,6\}\). Since \(|\{7,8\}|=2\), one has \(2\le |A\cap T|\le 4\). If \(|A\cap T|=4\), then \(A\subset T\), so \(A\) is listed. If \(|A\cap T|=3\), then \(A\) contains an edge \(E\subset T\), and Lemma~\ref{lem:H1H2-hypergraph-closure} (3b) yields \(\varepsilon_{A\setminus E}\in H_1\), impossible since \(A\setminus E\) contains one element of \(T\) and one element of \(\{7,8\}\). Thus \(|A\cap T|=2\). Let \(E:=A\cap T\). Since \(E\) is an edge of \(H_1\), Lemma~\ref{lem:H1H2-hypergraph-closure} (3b) yields \(\varepsilon_{A\setminus E}\in H_1\). As \(A\setminus E\subset \{7,8\}\), one has \(A\setminus E=\{7,8\}\). Hence \(A=E\cup \{7,8\}\), as desired.
\end{proof}

\begin{Proposition}[Case \(G_\Delta\cong K_8\)]\label{prop:case-H1-K8}
Assume that \(H_1=\{\varepsilon_{ij}:1\le i<j\le 8\}\). Then \(H_2=\{\eta_A:A\subset \{1,\dots,8\},\ |A|=4\}\).
\end{Proposition}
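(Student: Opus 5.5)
The plan follows the pattern of the preceding propositions: first show that every $4$-subset occurs, using closure under the group operation, and then note that no restriction is needed for the reverse inclusion.

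Let $A\subset\{1,\dots,8\}$ with $|A|=4$, and choose a partition $A=\{i,j\}\sqcup\{k,\ell\}$. Since $G_\Delta\cong K_8$, both $\varepsilon_{ij}$ and $\varepsilon_{k\ell}$ lie in $H_1$, and they are disjoint. Lemma~\ref{lem:H1H2-hypergraph-closure} (2) then gives $\eta_A\in H_2$. Concretely, $\varepsilon_{ij}+\varepsilon_{k\ell}=\eta_A\in\Lambda_\Delta$, and this element has height $2$. Therefore the set $\{\eta_A : |A|=4\}$ is contained in $H_2$.

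For the reverse inclusion, recall from the discussion preceding Lemma~\ref{lem:H1H2-hypergraph-closure} that every element of $H_2$ has the form $\eta_A$ with $|A|=4$. This holds because, by the proof of Theorem~\ref{thm:intro-main}, an element of height $2$ has exactly four coordinates equal to $1/2$ and all other coordinates equal to $0$. Hence $H_2\subseteq\{\eta_A:|A|=4\}$, and equality follows.

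Neither step is a real obstacle: the whole argument is an application of Lemma~\ref{lem:H1H2-hypergraph-closure} (2). As a consistency check, $\Lambda_\Delta$ then corresponds to the full even-weight code of length $8$. This gives $h^*(\Delta,t)=1+28t+70t^2+28t^3+t^4$, which is palindromic as Theorem~\ref{thm:intro-main} requires.
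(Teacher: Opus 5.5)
Your proposal is correct and follows the paper's proof: split each $4$-subset $A$ into two disjoint pairs and apply Lemma~\ref{lem:H1H2-hypergraph-closure}~(2). You also state the reverse inclusion explicitly, namely that every element of $H_2$ has the form $\eta_A$. The paper leaves this implicit, relying on the remark preceding Lemma~\ref{lem:H1H2-hypergraph-closure}.
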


\begin{proof}
Let \(A\subset \{1,\dots,8\}\) with \(|A|=4\). Writing \(A=\{i,j\}\sqcup \{k,\ell\}\), one has \(\varepsilon_{ij},\varepsilon_{k\ell}\in H_1\), so Lemma~\ref{lem:H1H2-hypergraph-closure} (2) yields \(\eta_A\in H_2\). Hence every \(4\)-subset of \(\{1,\dots,8\}\) gives an element of \(H_2\).
\end{proof}

We now obtain the classification theorem.

\begin{Theorem}\label{thm:degree4-classification}
Let \(\Delta\) be a \(7\)-dimensional Gorenstein simplex of degree \(4\) which is not a lattice pyramid.
Then, up to unimodular equivalence, \(\Delta\) is one of the nineteen simplices listed in Table~\ref{tab:deg4}, where \(\mathbf{0}\) denotes the origin of \(\RR^7\) and \(\eb_1,\ldots,\eb_7\) denote the standard unit coordinate vectors.
\end{Theorem}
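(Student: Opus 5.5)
The plan is to assemble the case analysis already carried out. By Theorem~\ref{thm:intro-main}, $\Lambda_\Delta=\frac12 C$ for an even self-complementary code $C\subset\FF_2^8$. By Proposition~\ref{prop:H_i-determine}, $\Lambda_\Delta$ is determined by $H_1$ and $H_2$, since $H_3=\xb+H_1$ and $H_4=\{\xb\}$. Proposition~\ref{prop:H1-partition} gives thirteen possible isomorphism types of $G_\Delta$. For each type, Propositions~\ref{prop:case-H1-empty}--\ref{prop:case-H1-K8} list the possible $H_2$. Summing the number of options in each case gives
\[
4+2+2+2+1+1+1+1+1+1+1+1+1=19,
\]
for $\emptyset,K_2,2K_2,4K_2,K_3,K_3\sqcup K_2,2K_3,K_4,K_4\sqcup 2K_2,2K_4,K_5,K_6\sqcup K_2,K_8$ respectively.

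\textbf{Step 1: each candidate is realized.} For each of the nineteen candidate pairs $(H_1,H_2)$, let $C$ be the span of the supports of $H_1\cup H_2$ together with $\mathbf 1_8$. Since each listed family is closed under the operations of Lemma~\ref{lem:H1H2-hypergraph-closure}, I expect that the weight-$2$ and weight-$4$ codewords of $C$ are exactly the prescribed ones. This must be checked case by case, for example by computing $\dim C$ and comparing $|C|$ with
\[
2+2|H_1|+|H_2|.
\]
For instance, case (4) of Proposition~\ref{prop:case-H1-empty} gives the extended Hamming code $[8,4,4]$, with $|C|=16=2+14$. Then the converse part of Theorem~\ref{thm:intro-main} yields a non-pyramid Gorenstein simplex, and Corollary~\ref{cor:weight} gives its $h^*$-polynomial
\[
1+|H_1|t+|H_2|t^2+|H_1|t^3+t^4.
\]

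\textbf{Step 2: the nineteen classes are pairwise inequivalent.} By Proposition~\ref{prop:delta-lambda-correspondence}, two simplices are unimodularly equivalent if and only if their groups agree up to a coordinate permutation, that is, the codes are permutation equivalent. Within a fixed graph type, the options for $H_2$ already have different sizes: $0,2,6,14$ for $\emptyset$; $0,4$ for $K_2$; $2,10$ for $2K_2$; $6,6+16$ for $4K_2$. Across different graph types, the isomorphism class of $G_\Delta$ is a permutation invariant. Hence the nineteen codes are inequivalent.

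\textbf{Step 3: vertex representatives.} For each code, I would write an explicit vertex set for Table~\ref{tab:deg4}. The method is to choose a basis of $C$ in a suitable form and read off a lattice of index $|C|$ over the simplex with vertices $\mathbf 0,\eb_1,\dots,\eb_7$, using the vertices $2\eb_i$ and sums as in Table~\ref{tab:deg3}. Each representative is then checked by recomputing $\Lambda_\Delta$.

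The main obstacle is Step 1. One must make sure that no listed configuration generates extra weight-$2$ or weight-$4$ words, which would collapse it into a different case. The second concern is bookkeeping for the vertex representatives. I would verify Step 1 by computing each code's dimension directly: the number of codewords must equal $2+2|H_1|+|H_2|$, and the dimension must be an integer.
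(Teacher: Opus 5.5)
Your proposal is correct and follows the paper's proof. You reduce to the pair $(H_1,H_2)$ via Theorem~\ref{thm:intro-main} and Proposition~\ref{prop:H_i-determine}, run through the thirteen graph types with the case propositions ($4+2+2+2+1+\cdots+1=19$ options), and realize each group through Proposition~\ref{prop:delta-lambda-correspondence}. Your Steps 1 and 2 spell out what the paper's brief ``this gives exactly the nineteen simplices'' leaves implicit, and both go through: $|C|=2+2|H_1|+|H_2|$ holds in all nineteen cases, so no extra weight-$2$ or weight-$4$ words appear, and the isomorphism type of $G_\Delta$ together with $|H_2|$ separates the classes.
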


\begin{table}[h]
\centering
\caption{The \(7\)-dimensional Gorenstein simplices of degree \(4\) which are not lattice pyramids}
\label{tab:deg4}
\begin{tabular}{c|c|c|l}
type & graph \(G_\Delta\) & \(h^*(\Delta,t)\) & vertices \\ \hline
\(\Delta^{(4)}_1\)
& \(\emptyset\)
& \(1+t^4\)
&\begin{tabular}[t]{@{}l@{}}\(\mathbf{0},\,\eb_1,\,\eb_2,\,\eb_3,\,\eb_4, \, \eb_5,\,\eb_6,\)\\ \(\eb_1+\eb_2+\eb_3+\eb_4+\eb_5+\eb_6+2\eb_7\)
\end{tabular}
\\ \hline

\(\Delta^{(4)}_2\)
& \(\emptyset\)
& \(1+2t^2+t^4\)
& \begin{tabular}[t]{@{}l@{}}\(\mathbf{0},\,\eb_1,\,\eb_2,\,\eb_1+\eb_2+2\eb_3,\,\eb_4, \eb_5,\,\eb_6,\) \\ \(\eb_4+\eb_5+\eb_6+2\eb_7\)
\end{tabular}
\\ \hline

\(\Delta^{(4)}_3\)
& \(\emptyset\)
& \(1+6t^2+t^4\)
& \begin{tabular}[t]{@{}l@{}}\(\mathbf{0},\,\eb_1,\,\eb_2,\,\eb_3,\,\eb_4,\,
\eb_1+\eb_4+2\eb_5,\,
\eb_2+\eb_4+2\eb_6,\) \\ \(
\eb_3+\eb_4+2\eb_7\)
\end{tabular}
\\ \hline

\(\Delta^{(4)}_4\)
& \(\emptyset\)
& \(1+14t^2+t^4\)
& \begin{tabular}[t]{@{}l@{}}\(\mathbf{0},\,\eb_1,\,\eb_2,\,\eb_3,\,
\eb_2+\eb_3+2\eb_4,\,
\eb_1+\eb_3+2\eb_5,\)\\ \(
\eb_1+\eb_2+2\eb_6,\,
\eb_1+\eb_2+\eb_3+2\eb_7\)
\end{tabular}
\\ \hline

\(\Delta^{(4)}_5\)
& \(K_2\)
& \(1+t+t^3+t^4\)
& \begin{tabular}[t]{@{}l@{}}\(\mathbf{0},\,\eb_1,\,\eb_2,\,\eb_3,\,\eb_4,\,\eb_5,\,
\eb_1+2\eb_6,\) \\ \(
\eb_2+\eb_3+\eb_4+\eb_5+2\eb_7\)
\end{tabular}
\\ \hline

\(\Delta^{(4)}_6\)
& \(K_2\)
& \(1+t+4t^2+t^3+t^4\)
& \(\mathbf{0},\,\eb_1,\,\eb_2,\,\eb_3,\,\eb_4,\,
2\eb_5,\,
\eb_1+\eb_2+2\eb_6,\,
\eb_3+\eb_4+2\eb_7\)
\\ \hline

\(\Delta^{(4)}_7\)
& \(2K_2\)
& \(1+2t+2t^2+2t^3+t^4\)
& \(\mathbf{0},\,\eb_1,\,\eb_2,\,\eb_3,\,\eb_4,\,
\eb_1+2\eb_5,\,
\eb_2+2\eb_6,\,
\eb_3+\eb_4+2\eb_7\)
\\ \hline

\(\Delta^{(4)}_8\)
& \(2K_2\)
& \(1+2t+10t^2+2t^3+t^4\)
& \begin{tabular}[t]{@{}l@{}}\(\mathbf{0},\,\eb_1,\,\eb_2,\,\eb_3,\,
2\eb_4,\,
\eb_1+2\eb_5,\,
\eb_1+\eb_2+2\eb_6,\) \\ \(
\eb_1+\eb_3+2\eb_7\)
\end{tabular}
\\ \hline

\(\Delta^{(4)}_9\)
& \(K_3\)
& \(1+3t+3t^3+t^4\)
& \(\mathbf{0},\,\eb_1,\,\eb_2,\,\eb_3,\,\eb_4,\,
2\eb_5,\,
2\eb_6,\,
\eb_1+\eb_2+\eb_3+\eb_4+2\eb_7\)
\\ \hline

\(\Delta^{(4)}_{10}\)
& \(4K_2\)
& \(1+4t+6t^2+4t^3+t^4\)
& 
\(\mathbf{0},\,\eb_1,\,\eb_1+2\eb_2,\,\eb_3,\,\eb_3+2\eb_4,\eb_5,\,\eb_5+2\eb_6,\, 2\eb_7\)
\\ \hline

\(\Delta^{(4)}_{11}\)
& \(4K_2\)
& \(1+4t+22t^2+4t^3+t^4\)
& \begin{tabular}[t]{@{}l@{}}\(\mathbf{0},\,\eb_1,\,\eb_2,\,2\eb_3,\,\eb_2+2\eb_4,\,\eb_1+2\eb_5,\,\eb_1+\eb_2+2\eb_6,\)\\ \(\eb_1+\eb_2+2\eb_7\)
\end{tabular}
\\ \hline

\(\Delta^{(4)}_{12}\)
& \(K_3\sqcup K_2\)
& \(1+4t+6t^2+4t^3+t^4\)
& \(\mathbf{0},\,\eb_1,\,\eb_2,\,\eb_3,\,
2\eb_4,\,
2\eb_5,\,
\eb_1+2\eb_6,\,
\eb_2+\eb_3+2\eb_7\)
\\ \hline

\(\Delta^{(4)}_{13}\)
& \(2K_3\)
& \(1+6t+18t^2+6t^3+t^4\)
& \begin{tabular}[t]{@{}l@{}}\(\mathbf{0},\,\eb_1,\,\eb_2,\,
2\eb_3,\,
2\eb_4,\,
\eb_1+2\eb_5,\,
\eb_1+2\eb_6,\) \\ \(
\eb_1+\eb_2+2\eb_7\)
\end{tabular}
\\ \hline

\(\Delta^{(4)}_{14}\)
& \(K_4\)
& \(1+6t+2t^2+6t^3+t^4\)
& \(\mathbf{0},\,\eb_1,\,\eb_2,\,\eb_3,\,
2\eb_4,\,
2\eb_5,\,
2\eb_6,\,
\eb_1+\eb_2+\eb_3+2\eb_7\)
\\ \hline

\(\Delta^{(4)}_{15}\)
& \(K_4\sqcup 2K_2\)
& \(1+8t+14t^2+8t^3+t^4\)
& \(\mathbf{0},\,\eb_1,\,\eb_2,\,
2\eb_3,\,
2\eb_4,\,
2\eb_5,\,
\eb_1+2\eb_6,\,
\eb_2+2\eb_7\)
\\ \hline

\(\Delta^{(4)}_{16}\)
& \(2K_4\)
& \(1+12t+38t^2+12t^3+t^4\)
& \(\mathbf{0},\,\eb_1,\,
2\eb_2,\,
2\eb_3,\,
2\eb_4,\,
\eb_1+2\eb_5,\,
\eb_1+2\eb_6,\,
\eb_1+2\eb_7\)
\\ \hline

\(\Delta^{(4)}_{17}\)
& \(K_5\)
& \(1+10t+10t^2+10t^3+t^4\)
& \(\mathbf{0},\,\eb_1,\,\eb_2,\,
2\eb_3,\,
2\eb_4,\,
2\eb_5,\,
2\eb_6,\,
\eb_1+\eb_2+2\eb_7\)
\\ \hline

\(\Delta^{(4)}_{18}\)
& \(K_6\sqcup K_2\)
& \(1+16t+30t^2+16t^3+t^4\)
& \(\mathbf{0},\,\eb_1,\,
2\eb_2,\,
2\eb_3,\,
2\eb_4,\,
2\eb_5,\,
2\eb_6,\,
\eb_1+2\eb_7\)
\\ \hline

\(\Delta^{(4)}_{19}\)
& \(K_8\)
& \(1+28t+70t^2+28t^3+t^4\)
& \(\mathbf{0},\,2\eb_1,\,2\eb_2,\,2\eb_3,\,2\eb_4, \, 2\eb_5,\,2\eb_6,\,2\eb_7\)
\end{tabular}
\end{table}

\begin{proof}
By Proposition~\ref{prop:H_i-determine}, the group \(\Lambda_\Delta\) is completely determined by \(H_1\) and \(H_2\).
By Proposition~\ref{prop:H1-partition}, the possible graphs \(G_\Delta\) are exactly
\[
\emptyset,\ 
K_2,\ 
2K_2,\ 
4K_2,\ 
K_3,\ 
K_3\sqcup K_2,\ 
2K_3,\ 
K_4,\ 
K_4 \sqcup 2K_2,\ 
2K_4,\ 
K_5,\ 
K_6\sqcup K_2,\ 
K_8.
\]
For each of these graphs, Propositions~\ref{prop:case-H1-empty}, \ref{prop:case-H1-K2}, \ref{prop:case-H1-2K2}, \ref{prop:case-H1-K3}, \ref{prop:case-H1-4K2}, \ref{prop:case-H1-K3K2}, \ref{prop:case-H1-2K3}, \ref{prop:case-H1-K4}, \ref{prop:case-H1-K4-2K2}, \ref{prop:case-H1-2K4}, \ref{prop:case-H1-K5}, \ref{prop:case-H1-K6K2}, and \ref{prop:case-H1-K8} determine all possibilities for \(H_2\).
Using Proposition~\ref{prop:delta-lambda-correspondence}, we obtain a representative simplex for each case, listed in Table~\ref{tab:deg4}.
The corresponding \(h^*\)-polynomials are computed from Corollary~\ref{cor:weight} by counting codewords of each weight, or equivalently from Lemma~\ref{lem:hstar-lambda}.
This gives exactly the nineteen simplices listed in the table.
\end{proof}
\bibliographystyle{plain}
\bibliography{bibliography}
\end{document}